\documentclass{amsart}
\usepackage{amssymb,amsmath,mathrsfs,amsfonts}
\usepackage{amscd, amssymb, amsmath, amsthm, graphics,graphicx}
\usepackage{amsmath,amsfonts,amsthm,amssymb}
\usepackage{mathabx}
\usepackage{fancyhdr}
\usepackage[all]{xy}
\usepackage[colorlinks=true]{hyperref}
\usepackage{multirow}
\usepackage{enumitem}
\numberwithin{equation}{section}

\newtheorem{theorem}{Theorem}[section]
\newtheorem{lemma}[theorem]{Lemma}
\newtheorem{proposition}[theorem]{Proposition}
\newtheorem{corollary}[theorem]{Corollary}

\theoremstyle{definition}
\newtheorem{definition}[theorem]{Definition}

\newtheorem{remark}[theorem]{Remark}

\begin{document}

\title[Two aspects of graph $3$-manifold groups]{Two aspects of graph $3$-manifold groups}

\author{Hongbin Sun}
\address{Department of Mathematics, Rutgers University - New Brunswick, Hill Center, Busch Campus, Piscataway, NJ 08854, USA}
\email{hongbin.sun@rutgers.edu}

\subjclass[2020]{57K30, 57M10, 18G90}
\thanks{The author is partially supported by the Simons Collaboration Grant 615229.}
\keywords{graph $3$-manifold groups, covering space, virtually poly-free, family Lex}

\date{\today}
\begin{abstract} 
We prove that fundamental groups of graph $3$-manifolds are virtually poly-free and lie in the family Lex. As a consequence, we prove that all finitely generated $3$-manifold groups also have these two properties. The first property is a purely group-theoretical concept, and the second is related to the left-exactness property of bounded cohomology of groups. Both properties are proved by constructing sequences of covers of graph $3$-manifolds.
\end{abstract}

\maketitle
\vspace{-.5cm}
\section{Introduction}

Our understanding of $3$-manifold groups has expanded enormously in the past two decades. The peak is Agol's virtually compact specialization theorem (\cite{Agol}) of hyperbolic $3$-manifold groups, which is based on Wise's machinery on cube complexes \cite{Wise}. 

After Agol's work, more $3$-manifold groups are proved to be virtually special, including: groups of graph $3$-manifolds with non-positively curved metrics (\cite{Liu}, c.f. \cite{PW1}), and mixed $3$-manifold groups (\cite{PW2}). We do not define virtually special groups here, but want to point out the following significant consequences: these $3$-manifold groups virtually (have finite-index subgroups that) are subgroups of right-angled Artin groups, virtually have surjections to non-abelian free groups, virtually are free-by-cyclic or surface-by-cyclic groups, and are linear over $\mathbb{Z}$, etc (see \cite[(H.5),  (H.13), (H.20), (H.31)]{AFW}, respectively). For more applications of Agol's virtually compact specialization theorem, see the survey papers \cite{AFW} and \cite{LS}.

In some sense, currently the least understood $3$-manifold groups are those of closed graph $3$-manifolds without non-positively curved metrics, since their groups are not virtually special (\cite{Liu}) and the above machinery cannot be applied to them. For example, essentially the only $3$-manifold groups that are unknown to be linear are groups of these graph $3$-manifolds (\cite[Problem 3.37]{BKR}). Although graph $3$-manifold groups virtually are graph-of-groups with $\text{free}\times \mathbb{Z}$ vertex groups and $\mathbb{Z}^2$ edge groups, the pasting pattern makes graph $3$-manifold groups hard to understand, and the level of difficulty depends on the property one wants to investigate.

The starting point of this paper is the following application of the virtual specialization results.
\begin{theorem}\label{virtualfiber}
Let $M$ be a compact, orientable, irreducible $3$-manifold with empty or tori boundary and infinite fundamental group, and we assume that $M$ admits a non-positively curved metric if it is a closed graph $3$-manifold. Then $M$ has a finite cover $\tilde{M}$ that is either a surface bundle over the circle, or a circle bundle over a surface.
\end{theorem}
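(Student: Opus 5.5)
The plan is to split according to the geometric decomposition of $M$, and in each case to get the fibration from a known virtual property of $\pi_1(M)$.

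\emph{Seifert fibered case.} If $M$ is Seifert fibered, its fundamental group is infinite, so the base orbifold is good. Passing to a finite orbifold cover of the base removes all cone points, and a further double cover fixes orientations. Pulling back gives a finite cover of $M$ that is a genuine circle bundle over a surface. No special cube complex machinery is needed here.

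\emph{Non-Seifert case.} This covers the case where the JSJ decomposition is nontrivial or $M$ is hyperbolic.

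1. Show that $\pi_1(M)$ is virtually compact special. For hyperbolic $M$ this is Agol's theorem (Wise in the cusped case). For mixed $M$ it is Przytycki--Wise. For graph manifolds it is Liu, or Przytycki--Wise when the boundary is nonempty. The hypothesis on closed graph manifolds is exactly what makes Liu's theorem applicable. Graph manifolds with nonempty boundary always admit non-positively curved metrics (Leeb), so this case is also covered.

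2. Deduce that $\pi_1(M)$ is virtually RFRS, using Agol's criterion (virtually special $\Rightarrow$ virtually RFRS; see \cite[(H.5)]{AFW}).

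3. Apply Agol's virtual fibering theorem to the RFRS finite cover $M'$. We need $b_1(M')>0$, which after passing to a further cover follows from virtual specialness (largeness). Then Agol's theorem says that some finite cover $\tilde M$ has a non-degenerate class in the cone over a fibered face of the Thurston norm ball. By Thurston's theorem, such a class is represented by a fibration $\tilde M\to S^1$ with compact surface fiber.

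The main obstacle is matching hypotheses in step 3. Agol's fibering theorem requires irreducibility and toroidal (or empty) boundary, both of which pass to finite covers, and it also requires that the Thurston norm not be identically degenerate. When the Thurston norm vanishes on all of $H^1(\tilde M)$, the manifold is itself a torus bundle or Seifert fibered, which is handled by the first case. So I would first treat the degenerate-norm cases via Seifert geometry, and then cite \cite[(H.20)]{AFW} for the general statement.

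In the closed graph manifold case without a non-positively curved metric, the conclusion indeed can fail. This is why the hypothesis is necessary; it is not an artifact of the proof.
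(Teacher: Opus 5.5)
The gap is the Sol case. Your Seifert case matches the paper, and your Steps 1--3 unpack \cite[(H.20)]{AFW}, which is what the paper cites for hyperbolic, mixed, and non-positively curved graph manifolds.

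Besides the Seifert and (H.20) cases, the paper's proof also treats manifolds with infinite solvable fundamental group, i.e.\ closed Sol manifolds. These are within the scope of the statement. In the paper's usage Sol manifolds are not graph manifolds: graph manifolds there are virtually built from $F\times S^1$ pieces (Lemma~\ref{simplegraph}), and the proof of Corollary~\ref{morepolyfree} sends every non-graph manifold to this theorem. So the curvature hypothesis does not exclude them.

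In your scheme a Sol manifold lands in the non-Seifert case, and Step 1 fails for it. It is neither hyperbolic nor mixed, and it carries no non-positively curved metric, so Liu's theorem is unavailable. In fact no argument can give Step 1 there, because a Sol group is not virtually compact special. By the solvable subgroup theorem, no finite-index subgroup of it is CAT(0). Alternatively, in every finite-index subgroup the fiber lattice $\mathbb{Z}^2$ dies in rational first homology, so no finite-index subgroup is RFRS. Hence your Steps 2 and 3 are never reached.

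Your remark about degenerate Thurston norms does not repair this. For a Sol manifold you never obtain the RFRS cover $\tilde M$ that the remark refers to. Also, a torus bundle with Anosov monodromy is not Seifert fibered, so it is not ``handled by the first case''. Separately, Agol's fibering theorem has no non-degeneracy hypothesis, so that detour is unnecessary in the other cases too.

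What is missing is a separate, elementary Sol case. A closed orientable Sol manifold is either a torus bundle over $S^1$ with Anosov monodromy, or a union of two twisted $I$-bundles over the Klein bottle, and the latter is double covered by such a torus bundle. Either way $M$ is virtually a surface bundle over the circle. This is how the paper disposes of the solvable case.
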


If $M$ is a hyperbolic $3$-manifold, a mixed $3$-manifold, or a graph $3$-manifold with a non-positively curved metric, Theorem \ref{virtualfiber} follows from \cite[(H.20)]{AFW}, and these manifolds are virtually surface bundles over the circle. The remaining $3$-manifolds either have solvable fundamental groups, or are Seifert $3$-manifolds. They are finitely covered by torus bundles over the circle, or circle bundles over surfaces, respectively. 

For the $\tilde{M}$ in Theorem \ref{virtualfiber}, we have an exact sequence $$1\to A\to \pi_1(\tilde{M})\to B\to 1,$$ where either $A\cong \mathbb{Z}$ and $B$ is a surface/free group, or $A$ is a surface/free group and $B\cong \mathbb{Z}$. 

In this paper, by free groups, we mean free groups generated by zero, finitely many, or infinitely many elements. By surface groups, we mean fundamental groups of closed surfaces with non-positive Euler characteristics, including $\mathbb{Z}^2$ and $\pi_1$ of the Klein bottle.

Theorem \ref{virtualfiber} implies that the $\pi_1(M)$ in this result is virtually poly-free (c.f. \cite[(H.30)]{AFW}) and lies in the family Lex (which is well-known and will be proved in Section \ref{morelexsection}). In this paper, we will study these two properties for graph $3$-manifold groups.

A group $G$ is said to be {\it poly-free} if there is a finite sequence of subgroups 
$$G=G_0\vartriangleright G_1\vartriangleright G_2 \vartriangleright\cdots \vartriangleright G_n=\{1\},$$ such that $G_i/G_{i+1}$ is isomorphic to a (possibly infinitely generated) free group for all $i=0,1,\cdots,n-1$. Since surface groups are free-by-cyclic, where the free group might be infinitely generated, the $\pi_1(\tilde{M})$ in Theorem \ref{virtualfiber} is poly-free, with $n\leq 3$. So the $\pi_1(M)$ in Theorem \ref{virtualfiber} has a poly-free finite-index subgroup, thus is virtually poly-free.

Poly-free groups have hierarchical structures built from free groups, which are fundamental objects in combinatorial and geometric group theory. So (virtually) poly-freeness is a nice property with many consequences. Many groups related to low-dimensional topology are poly-free, including: surface groups, groups of $3$-dimensional bundles, pure braid groups, and right-angled Artin groups (\cite{HS}). 

For the graph $3$-manifolds not covered by Theorem \ref{virtualfiber}, we will prove their groups are also virtually poly-free.
\begin{theorem}\label{polyfree}
Let $M$ be a compact graph $3$-manifold, then $\pi_1(M)$ is virtually poly-free.
\end{theorem}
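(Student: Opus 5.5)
Theorem \ref{virtualfiber} already settles every graph manifold that is not a closed non-NPC graph manifold (Seifert, Sol, those with boundary, and NPC closed ones). So the plan concentrates on a closed graph $3$-manifold $M$ with nontrivial JSJ decomposition. We may first pass to a finite cover so that $M$ is orientable, every JSJ piece is a product $M_v = \Sigma_v\times S^1$ with $\Sigma_v$ an orientable surface with boundary and $\chi(\Sigma_v)<0$, and the underlying JSJ graph $\Gamma$ has no loops. The aim is to build a further finite cover $\tilde M$ together with a surjection $\pi_1(\tilde M)\to F$ onto a free group whose kernel $K$ is poly-free of bounded length.

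\textbf{Step 1 (map to the graph).} There is a natural surjection $\pi_1(M)\to\pi_1(\Gamma)$, and $\pi_1(\Gamma)$ is free. Its kernel $K$ is the fundamental group of the infinite cover $M_\Gamma$ corresponding to the universal cover of $\Gamma$. This cover is a tree of Seifert pieces $\Sigma_v\times S^1$ glued along tori. Thus $K$ is the fundamental group of a graph of groups over a tree, with vertex groups $F_v\times\mathbb{Z}$ and edge groups $\mathbb{Z}^2$.

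\textbf{Step 2 (the kernel $K$ is poly-free).} In each vertex group, map $F_v\times\mathbb{Z}\to F_v$ by killing the fiber, or more precisely by projecting onto $\pi_1(\Sigma_v)$. These maps are not compatible across edges, because the fiber of one piece is a non-fiber curve in the adjacent piece.

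The idea is to use a two-colouring of the tree, which is bipartite, say into black and white vertices. Kill the fibers of all black pieces simultaneously, i.e.\ take the normal closure $N_b$ of the black fibers in $K$. In a white piece adjacent to a black piece, the black fiber is a boundary curve of $\Sigma_w$ in one boundary torus, not a multiple of the white fiber. After choosing a finite cover that makes the gluing data nice, $K/N_b$ should be a tree of groups in which:
\begin{itemize}
\item each black vertex becomes the free group $\pi_1(\Sigma_b)$;
\item each white vertex becomes $(\Sigma_w\times S^1)$ with some boundary slopes killed, i.e.\ Dehn filled.
\end{itemize}
Filling boundary tori of $\Sigma_w\times S^1$ along slopes transverse to the fiber yields a Seifert manifold with boundary (if at least one boundary torus remains unfilled) or closed; its fundamental group is $\text{free}\times\mathbb{Z}$ or virtually so.

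This is where the finite cover is needed. We must pass to a cover in which every white piece retains an unfilled boundary, or arrange the fillings to be trivial. The cleaner choice is to kill, in the black pieces, a curve that is a fiber in the neighbouring white piece. Then in the quotient each white piece $\Sigma_w\times S^1$ loses its $S^1$ factor along the edges and becomes free, while black pieces are glued along $\mathbb{Z}$. Iterating the two colours gives a filtration $K\rhd N\rhd\cdots$ with free successive quotients, each being a tree of free groups amalgamated along cyclic subgroups that are free factors, hence free.

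The key check is that $N_b$ itself is free. It is generated by fiber subgroups of a family of pieces, and because of the tree structure it should be a free product of conjugates of $\mathbb{Z}$ by a Bass--Serre argument.

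\textbf{Step 3 (assembling).} Combining the two steps gives a series $\pi_1(M)\rhd K\rhd N_b\rhd\cdots\rhd 1$ with free quotients. This shows $\pi_1(M)$ is virtually poly-free, with virtuality coming only from the finite covers used in the setup and in Step 2.

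\textbf{Main obstacle.} The hard step is Step 2. We must find a finite cover in which the fiber-killing quotients are genuinely free, in particular that edge groups become free factors and amalgamations remain free. This requires controlling the intersection numbers of fibers on JSJ tori, likely by choosing covers of each $\Sigma_v$ with prescribed boundary degrees (the standard "matching boundary degrees" construction for graph manifolds). That control is what substitutes for the missing NPC metric.
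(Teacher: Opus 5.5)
\textbf{There is a genuine gap in your Step 2.} Your Step 1 (passing to the kernel $K$ of $\pi_1(M)\to\pi_1(\Gamma)$, i.e.\ the tree-shaped cover) is exactly what the paper does. Step 2 fails because the quotients it produces are not free, and no finite cover repairs this.

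The colouring of the tree is bipartite, so \emph{every} boundary torus of a white piece $M_w=\Sigma_w\times S^1$ is glued to a black piece. Killing the black fibers therefore Dehn fills \emph{all} boundary tori of $M_w$, along slopes meeting $h_w$ in $|r_e|\neq 0$ points. On each edge torus the kernel seen from either side is exactly $\langle h_b\rangle$. Hence $K/N_b$ is a tree of groups whose black vertex groups are $\pi_1(\Sigma_b)$ and whose white vertex groups (which inject) are fundamental groups of closed Seifert manifolds. With base genus $\geq 1$ (Lemma~\ref{simplegraph}) the white fiber is central of infinite order there, so $K/N_b$ contains $\mathbb{Z}^2$ and is not free. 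Even if some boundary torus stayed unfilled, the image of $\pi_1(M_w)$ would be a non-cyclic group with infinite cyclic centre, again not free.

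Your ``cleaner choice'' is the same construction with the colours swapped. Iterating (killing the white fibers next) fails too: the core of each filling solid torus has $h_w$ as an $|r_e|$-th power, so either torsion appears or the whole edge torus group dies, since $h_b,h_w$ span an index-$|r_e|$ subgroup of $\pi_1(T)$. Freeness of $N_b$, which may well hold, does not rescue this: in a poly-free series the successive quotients are what matter. The structural reason is that a free group with nontrivial centre is cyclic. So under any homomorphism from $K$ to a free group, each vertex group $\pi_1(\Sigma_v)\times\mathbb{Z}$ either has cyclic image or kills its fiber. This points to quotients in which every vertex group has cyclic image, i.e.\ a map to $\mathbb{Z}$.

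\textbf{The missing idea is such a map $K\to\mathbb{Z}$ with free kernel, coming from a horizontal surface.} In the tree cover there are no cycles, so the charge obstruction that keeps $M$ from virtually fibering disappears. The paper exhausts the tree by finite subtrees and inductively builds a connected, properly embedded, horizontal surface $\Sigma$ in the tree cover. When a new piece $F_v\times S^1$ is attached, only the slope $(u,t_0)$ on the attaching torus is prescribed, with $u\neq 0$. By Lemma~\ref{horizontalsurface} one only needs $t_0+t_1+\cdots+t_n=0$ on the remaining $n\geq 2$ tori. This can be solved while keeping every new slope off the neighbouring fiber, which is where ``at least three boundary components'' is used. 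Intersection number with $\Sigma$ gives $K\to\mathbb{Z}$, surjective on every vertex group. The corresponding cover is a union of pieces $\Sigma_v\times\mathbb{R}$ glued along cylinders, hence homotopy equivalent to the non-compact surface $\Sigma$, so the kernel is free. This yields $\pi_1(M)>G_1\vartriangleright K\vartriangleright G_3$ with quotients free, $\mathbb{Z}$, free. It works uniformly for all compact graph manifolds, with no appeal to Theorem~\ref{virtualfiber}.
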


Theorem \ref{polyfree} implies that all finitely generated $3$-manifold groups are virtually poly-free.
\begin{corollary}\label{morepolyfree}
Let $G$ be a finitely generated $3$-manifold group, then $G$ is virtually poly-free.
\end{corollary}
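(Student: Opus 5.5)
The plan is to reduce Corollary \ref{morepolyfree} to Theorem \ref{polyfree} and Theorem \ref{virtualfiber} by the standard decomposition theory of $3$-manifolds. The key observation is that virtual poly-freeness is inherited by subgroups: if $G=G_0\vartriangleright\cdots\vartriangleright G_n=\{1\}$ has free quotients and $H\le G$, then $H\cap G_i$ is a normal series of $H$ with $(H\cap G_i)/(H\cap G_{i+1})$ embedding in the free group $G_i/G_{i+1}$, hence free (possibly infinitely generated). The same holds for finite-index subgroups of $H$. The class of virtually poly-free groups is therefore closed under subgroups. It is also closed under free products of finitely many factors: if each $G_j$ has a finite-index normal poly-free subgroup $K_j$, the kernel of $G_1*\cdots*G_m\to\prod G_j/K_j$ is a finite-index subgroup. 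By the Kurosh subgroup theorem, this kernel is a free product of a free group with conjugates of subgroups of the $K_j$. The kernel $N$ of the further map to the finite product, restricted suitably, has a free quotient by the normal closure of the vertex groups. Alternatively, and more cleanly, we view the free product as $\pi_1$ of a graph of groups with trivial edge groups. For a finite-index torsion-free normal subgroup $N$ with $N\cap G_j\le K_j$, we get $N\to\pi_1(\text{underlying graph})$, which is free, with kernel the free product of the (infinitely many) conjugates of the $N\cap G_j$. An infinite free product of poly-free groups with a uniform bound on length is poly-free, by taking the free product of the corresponding terms of the series, padded by trivial groups. This settles free products.

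Now let $G$ be a finitely generated $3$-manifold group. By Scott's core theorem, $G=\pi_1$ of a compact $3$-manifold. Passing to an index $\le 2$ subgroup (allowed, since we only need a virtual statement), we may assume the manifold $N$ is orientable. Via the prime decomposition (after discarding $S^2\times S^1$ factors, which contribute $\mathbb{Z}$ free factors), $G$ is a free product of $\pi_1$ of irreducible compact orientable $3$-manifolds, together with free groups. By the previous paragraph, it suffices to treat an irreducible compact orientable $M$. If $\partial M$ has a non-torus component or compressible boundary, we compress the boundary further. Standard results then give one of two cases. In the first, $M$ with incompressible boundary of higher genus has $\pi_1$ that embeds in $\pi_1$ of a manifold with torus boundary, or (via doubling along the non-torus boundary) embeds in the $\pi_1$ of a closed orientable irreducible manifold, the double $DM$. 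Doubling preserves irreducibility once the boundary is incompressible, and $\pi_1(M)$ is a retract of $\pi_1(DM)$. So by closure under subgroups, we reduce to $M$ with empty or torus boundary. For finite $\pi_1$ the claim is trivial. Otherwise, either $M$ is a closed graph manifold without a nonpositively curved metric, or a compact graph manifold, where we apply Theorem \ref{polyfree}; or else Theorem \ref{virtualfiber} applies and the discussion after it shows $\pi_1(M)$ is virtually poly-free.

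The main obstacle I expect is the boundary reduction: handling compressible boundary, and checking that the double of an irreducible manifold with incompressible non-torus boundary components is irreducible with torus-or-empty boundary. This is classical, but it must be arranged carefully so that subgroups, and not quotients, are what is used. Compressing along disks only changes $\pi_1$ by free product decompositions, by the loop theorem, which is handled by the free-product closure above.
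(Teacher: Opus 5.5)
Your proposal is correct in outline and has the same skeleton as the paper's proof: Scott core, orientation double cover, prime decomposition, disc compressions, reduction of higher-genus incompressible boundary to the empty/torus-boundary case, then Theorems \ref{virtualfiber} and \ref{polyfree}, together with closure of virtual poly-freeness under subgroups and finite free products. It differs in two places.

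\emph{Higher-genus boundary.} The paper glues on compact hyperbolic manifolds with totally geodesic boundary (via \cite{Sun}) and uses incompressibility to get $\pi_1$-injectivity. You double instead, and the fold map makes $\pi_1(M)$ a retract of $\pi_1(DM)$. This is more elementary. Moreover, the retraction gives injectivity with no incompressibility hypothesis at all. So you could double $M$ along its whole boundary immediately after the Scott core and orientable cover, and apply the prime decomposition to the closed manifold $DM$. The compression step, and the irreducibility check for the double that you flag as the main obstacle, would then disappear.

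\emph{Free products.} The paper uses that $\ker(A*B\to A\times B)$ is free and that $A\times B$ is poly-free. You use the action of a finite-index normal subgroup $N$ on the Bass--Serre tree $T$.

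One step in your free-product argument is wrong as stated. ``Taking the free product of the corresponding terms of the series'' does not give a normal series. If $A_1\vartriangleleft A$ is nontrivial, $1\ne a_1\in A_1$ and $b\in B\setminus B_1$, then uniqueness of reduced forms in $A*B$ shows $b^{-1}a_1b\notin A_1*B_1$. Hence $A_1*B_1$ is not normal in $A*B$.

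The claim you need is still true. The kernel of $*_\lambda H_\lambda\to *_\lambda H_\lambda/H_{\lambda,1}$ has free quotient, and by Kurosh it is a free product of conjugates of the $H_{\lambda,1}$ and a free group. Induction on the length of the series then finishes, exactly as in the proof of Lemma \ref{starnproperty}(3).

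It is simpler still to avoid infinite free products altogether. Since $N\backslash T$ is finite, $N$ is a finite free product of poly-free groups and a free group, and the paper's $A*B\to A\times B$ argument applies. Separately, your torsion-freeness assumption on $N$ is automatic and not needed, and your first, garbled attempt at the free-product lemma should simply be dropped.
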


A group lies in the family Lex if it satisfies a left-exactness property on bounded cohomology of groups (defined in \cite{Bou}), see Definition \ref{lexdef}. Some properties of Lex groups are summarized in Proposition \ref{lexproperty}, which implies that groups in Theorem \ref{virtualfiber} lie in the family Lex (see Section \ref{morelexsection}). It is unknown whether there exists a group that does not lie in the family Lex, and whether the free product of two Lex groups still lies in the family Lex.

In this paper, we prove that all non-virtually fibered graph $3$-manifold groups lie in the family Lex.

\begin{theorem}\label{lex}
Let $M$ be a closed graph $3$-manifold that is not virtually fibered, then $\pi_1(M)$ lies in the family Lex.
\end{theorem}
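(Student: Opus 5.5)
The plan is to show that $\pi_1(M)$ is built from groups already known to lie in Lex, using operations that Proposition \ref{lexproperty} says preserve Lex. I expect to use four of its closure properties: passing to and from finite-index subgroups, extensions (in particular those with amenable, e.g.\ central $\mathbb{Z}$, kernel), the facts that amenable groups and free groups lie in Lex, and closure under directed unions. The last is the one I am least sure is among the stated properties.

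\emph{Step 1: normalize by a finite cover.} Since Lex passes to finite-index overgroups, I may replace $M$ by a finite cover. After this, every JSJ piece is $M_v\cong \Sigma_v\times S^1$ with $\Sigma_v$ an orientable surface with nonempty boundary and $\chi(\Sigma_v)<0$. The JSJ graph $\Gamma$ is also arranged so that at each JSJ torus the two adjacent fibers $h_v,h_w$ intersect nontrivially. Because $M$ is closed and not virtually fibered, it admits no non-positively curved metric, so Theorem \ref{virtualfiber} gives nothing. I will only use that the gluing matrices between fiber and base directions have the rigid form forced by the failure of the Leeb/Buyalo--Svetlov criterion.

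\emph{Step 2: build the tower of covers.} Let $N\lhd \pi_1(M)$ be the normal closure of all fiber classes $h_v$. Killing both $h_v$ and $h_w$ on each JSJ torus kills that torus's $\mathbb{Z}^2$. Hence $Q=\pi_1(M)/N$ is a graph of groups with trivial edge groups. Its vertex groups are $\pi_1(\Sigma_v)$ with all boundary curves killed, i.e.\ closed-surface groups (or finite/cyclic groups, if a boundary curve becomes a proper power after killing). So $Q$ is a free product of surface groups and a free group, and is virtually a free product of surface groups and a free group. It therefore lies in Lex by Theorem \ref{virtualfiber}-type reasoning, provided I can avoid the unknown free-product question. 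To do so, I will instead pass to a further finite cover in which $Q$ becomes free, or, alternatively, take the intermediate quotient $\pi_1(M)\to\pi_1(\Gamma)$ first, which is free. The group $N$ (or the kernel of $\pi_1(M)\to \pi_1(\Gamma)$) is $\pi_1$ of an infinite regular cover $\hat M$. Its JSJ pieces are $\hat\Sigma\times S^1$, where the $\hat\Sigma$ are planar surfaces, and the fibers still close up.

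\emph{Step 3: the kernel.} The group $\pi_1(\hat M)$ is a directed union of the $\pi_1$ of finite unions of pieces. Each such finite union is a compact graph manifold with boundary, hence virtually fibered (a graph manifold with nonempty boundary always admits a non-positively curved metric). By Theorem \ref{virtualfiber} and the argument of Section \ref{morelexsection}, its group lies in Lex. Closure under directed unions then puts $\pi_1(\hat M)$ in Lex. Closure under extensions by the free quotient finishes the proof.

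The main obstacle is the last two operations: whether Lex is closed under extensions with non-amenable (free) kernel or quotient, and under directed unions. If Proposition \ref{lexproperty} only gives extension closure when the quotient is amenable, I would reorganize the tower. I would first quotient by a central-like $\mathbb{Z}$ coming from a single fiber in a suitable infinite cyclic cover, and then iterate covers so that every quotient in the sequence is $\mathbb{Z}$ or trivial.
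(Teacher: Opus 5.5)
There is a genuine gap, and it is the step you were unsure about. Proposition \ref{lexproperty}(3) only lets you pass from a Lex normal subgroup to the ambient group when the \emph{quotient} is amenable. Nothing available lets you climb an extension whose quotient is a non-abelian free group such as $\pi_1(\Gamma)$. Closure under directed unions is not among the available properties either.

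The directed-union step is also unnecessary. The kernel of $\pi_1(M)\to\pi_1(\Gamma)$ is already free-by-$\mathbb{Z}$ by Step III of the proof of Theorem \ref{polyfreesubgroups}, hence Lex. In fact your tower is exactly the poly-free tower of Theorem \ref{polyfreesubgroups}. If Lex were closed under extensions with free quotient, Theorem \ref{lex} would follow from it in one line. The paper's whole difficulty is to build a chain in which the free group appears \emph{only} as the bottom kernel, with finite or abelian quotients above it.

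Your Step 2 has further inaccuracies:
\begin{itemize}
\item Killing $h_v$ and $h_w$ only kills an index-$|r_e|$ subgroup of each torus group, so the edge groups of $Q$ are finite cyclic, not trivial.
\item The vertex groups of $Q$ are closed $2$-orbifold groups, not surface groups.
\item No finite-index subgroup of $Q$ is free, since $Q$ contains closed surface groups.
\end{itemize}

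Your fallback does not supply the missing idea either. No fiber class is central: each $h_w$ projects to a nontrivial power of a boundary curve in the adjacent vertex group $\pi_1(\Sigma_v)\times\mathbb{Z}$. More fundamentally, since $M$ is not virtually fibered, no finite cover has a $\mathbb{Z}$-quotient with free kernel, so iterating $\mathbb{Z}$-covers naively cannot work.

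Here is what the paper does instead.
\begin{itemize}
\item It finds a finite cover $\tilde M$ and two JSJ tori such that the cut-open manifold $N=\tilde M\setminus\setminus(T_{|e_*|}\cup T_{|e_{**}|})$ carries connected horizontal surfaces $\Sigma_c$ for all but two slopes $c$. Each $\Sigma_c$ has slope $c$ on $T_{e_*}$ and $\iota(c)$ on $T_{\bar e_*}$, and $\theta=(f_{\bar e_*})_\#\circ\iota$ has determinant $1$. This is Proposition \ref{technicalresult}, built from Lemmas \ref{horizontaladvanced2}, \ref{existsolution} and \ref{twosolutions} together with a vertex of non-zero charge.
\item Lemma \ref{ai} gives a $\theta$-orbit of slopes avoiding the two exceptional ones.
\item In the infinite cyclic cover $M_2=\bigcup_{i\in\mathbb{Z}}N_i$, the surfaces $\Sigma_{\theta^i(c_0)}\subset N_i$ therefore match up to rational multiples across the gluing tori.
\item After a cover of index at most $2$ that fixes orientations, rescaling the dual classes by products of $k'_{j-1}/k_j$ gives a homomorphism $\psi:H_1(M_3;\mathbb{Z})\to\mathbb{Q}$ whose kernel is free.
\end{itemize}
The resulting chain has quotients of finite index, then $\mathbb{Z}$, then index at most $2$, then a subgroup of $\mathbb{Q}$, ending in a free group. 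Every quotient is amenable, which is exactly what Proposition \ref{lexproperty} can handle.
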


We also prove that all finitely generated $3$-manifold groups lie in the family Lex.

\begin{corollary}\label{morelex}
Let $G$ be a finitely generated $3$-manifold group, then $G$ lies in the family Lex.
\end{corollary}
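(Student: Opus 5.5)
The plan is to reduce to compact $3$-manifolds and then to the pieces already handled: Theorem \ref{virtualfiber} and Theorem \ref{lex} for the geometric and graph pieces, and Proposition \ref{lexproperty} for assembling them. The step I expect to be the real obstacle is gluing pieces along free products, since the paper says it is unknown whether Lex is closed under free products.

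\emph{Step 1 (reduction to a compact manifold).} Let $G=\pi_1(N)$ be finitely generated. By Scott's core theorem, $G\cong \pi_1(C)$ for a compact $3$-manifold $C$. Passing to the orientation double cover changes $G$ only up to finite index, and Lex is closed under finite index by Proposition \ref{lexproperty}. So I may assume $C$ is orientable. Capping off sphere boundary components does not change $\pi_1$. The Kneser--Milnor prime decomposition then writes $G$ as a free product of three kinds of factors: $\mathbb{Z}$'s, finite groups, and fundamental groups of irreducible compact orientable manifolds.

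\emph{Step 2 (irreducible pieces).} Let $P$ be an irreducible piece with infinite $\pi_1$.
\begin{itemize}
\item If $\partial P$ contains a surface of genus $\ge 2$, I will double $P$ along those components, or equivalently use that $\pi_1(P)$ is then a free product or amalgam reducing to the torus-boundary case. In practice I would use the standard fact that such $P$ is virtually special (Przytycki--Wise), so $\pi_1(P)$ is virtually a subgroup of a RAAG. Hence it is virtually poly-free, as for the groups in Theorem \ref{virtualfiber}.
\item If $\partial P$ is empty or a union of tori, then Theorem \ref{virtualfiber} applies. A finite cover is a surface bundle or a circle bundle, so $\pi_1(P)$ is virtually an extension of a surface or free group by $\mathbb{Z}$, or of $\mathbb{Z}$ by a surface or free group.
\item The only exception is closed graph manifolds without non-positively curved metrics, which are not virtually fibered. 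Those are exactly covered by Theorem \ref{lex}.
\end{itemize}
In the extension cases I will apply Proposition \ref{lexproperty}: Lex contains amenable groups and free groups, and is closed under the relevant extensions and under finite index. This puts every prime factor in Lex; this is the argument the paper sketches for Theorem \ref{virtualfiber} groups in Section \ref{morelexsection}.

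\emph{Step 3 (free products) — the main obstacle.} I would not go through a general free-product closure for Lex. Instead I would use the virtual structure of the whole group. Take a finite-index subgroup $G'$ of $G$ killing the finite factors, for instance the kernel of the projection of $G$ onto the product of its finite free factors. By Kurosh, $G'$ is a free product of a free group and finitely many groups $H_i$, each commensurable with a Lex prime factor. Then the natural surjection
$$G'\to H_1\times\cdots\times H_k$$
is the wrong tool, so my first attempt would be different. I would use that $G'$ is again $\pi_1$ of a compact manifold obtained by connected sums. I would then try to realize $G'$, up to finite index, as an extension of a free group by a Lex group, or as $\pi_1$ of a graph-manifold-type object to which the covering construction behind Theorem \ref{lex} extends.

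\emph{Fallback for Step 3.} If the extension closure in Proposition \ref{lexproperty} is strong enough, I would bypass Steps 2 and 3 entirely. Corollary \ref{morepolyfree} gives a finite-index poly-free subgroup $G''\le G$. An induction on the length of the subnormal series, using that free groups lie in Lex, shows $G''$ is Lex. Finite-index closure then finishes the proof. I expect the extension step of this induction, with possibly infinitely generated free quotients, to be where the precise hypotheses of Proposition \ref{lexproperty} must be checked carefully.
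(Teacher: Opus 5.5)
There is a genuine gap, and it is where you suspected. Step 3 contains no argument, only the hope of realizing $G'$ as some extension or graph-manifold-like object. The fallback also cannot work. Proposition \ref{lexproperty}(3) closes Lex only under extensions with \emph{amenable} quotient. A poly-free series such as the one in Theorem \ref{polyfreesubgroups} has the layer $G_1/G_2\cong\pi_1(\Gamma_{M_1})$, a non-abelian free group, so induction along a poly-free series is unavailable. This is exactly why the paper proves the harder Theorem \ref{lexsubgroups}, in which the free group appears only as the last term.

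The same problem affects Step 2. The chain ``virtually special $\Rightarrow$ virtually a subgroup of a RAAG $\Rightarrow$ virtually poly-free'' gives nothing about Lex, and nothing in Proposition \ref{lexproperty} lets Lex pass to subgroups. For a circle bundle over a closed surface of negative Euler characteristic, the extension $1\to\mathbb{Z}\to\pi_1(\tilde M)\to\pi_1(\Sigma)\to 1$ has non-amenable quotient. You must first pass to the cover induced from an infinite cyclic cover of the base, as the paper does. That cover has group $\pi_1(\tilde\Sigma)\times\mathbb{Z}$ with $\tilde\Sigma$ noncompact, so its kernel to $\mathbb{Z}$ is free.

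The missing idea is to replace Lex by a smaller class that has the closure properties Lex lacks. The paper uses property $(\star_n)$: a subnormal series $G=G_0\vartriangleright\cdots\vartriangleright G_{2n}$ whose layers alternate between finite and abelian, with $G_{2n}$ free. This class has the following properties:
\begin{itemize}
\item it implies Lex, by iterating Proposition \ref{lexproperty};
\item it passes to all subgroups, by intersecting the series, and to finite-index overgroups;
\item it is closed under free products whose factors have finitely many isomorphism types.
\end{itemize}
For free products, map $*_\lambda G_\lambda$ onto the finite group $\prod_i G_{\lambda_i}/G_{\lambda_i,1}$. By Kurosh the kernel is a free product of copies of the $G_{\lambda_i,1}$ and a free group. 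Map this kernel onto the abelian group $\prod_i G_{\lambda_i,1}/G_{\lambda_i,2}$, sending the free factor to $1$. Kurosh again, together with induction on $n$, finishes the argument. This is the correct version of the surjection $G'\to H_1\times\cdots\times H_k$ that you rejected: you map onto the product of the amenable layers, not onto the factors themselves.

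With this in hand, the paper proceeds in four steps.
\begin{itemize}
\item It verifies $(\star_2)$ for compact orientable irreducible manifolds with empty or toral boundary. The surface-bundle and circle-bundle cases are done by hand. Non-virtually-fibered closed graph manifolds come from Theorem \ref{lexsubgroups}, after replacing its $G_1$ by a normal finite-index subgroup.
\item Manifolds with incompressible higher-genus boundary are embedded into such manifolds by gluing hyperbolic pieces with totally geodesic boundary, and subgroup-closure applies.
\item Compressible boundary and the prime decomposition are handled by free-product closure.
\item Non-orientability and noncompactness (via the Scott core) are handled by finite-index closure.
\end{itemize}
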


The proofs of Theorems \ref{polyfree} and \ref{lex} are both based on constructions of sequences of subgroups of $\pi_1(M)$, and the precise results are Theorems \ref{polyfreesubgroups} and \ref{lexsubgroups}. The proof of Theorem \ref{polyfreesubgroups} is easier, since we allow free group quotients in all steps; but Theorem \ref{lexsubgroups} is harder, since we only allow free groups in the last step.

The organization of this paper is summarized as follows. In Section \ref{preliminarygraph}, we review basic properties of graph $3$-manifolds. In Section \ref{polyfreeproof}, we use these properties to prove Theorem \ref{polyfree}, and Corollary \ref{morepolyfree} is also proved there. In Section \ref{lexbasics}, we review the definition and basic properties of Lex groups. We give some technical results on graph $3$-manifolds in Section \ref{moregraph}, mostly following \cite{WY}. In Section \ref{technicalsection}, we use the results in Section \ref{moregraph} to construct a finite cover of a closed, non-virtually fibered graph $3$-manifold satisfying some technical conditions (Proposition \ref{technicalresult}). Theorem \ref{lex} and Corollary \ref{morelex} will be proved in Sections \ref{lexproof} and \ref{morelexsection}, respectively.

\bigskip

{\bf Role of AI.} The proof of the second half of Lemma \ref{ai} was generated by Google Gemini, and checked by the author. The author also used Google Gemini to search for references and to translate \cite{Bra, Bou}.

{\bf Acknowledgment.} The author thanks Xiaolei Wu for asking him whether graph $3$-manifold groups are virtually poly-free. The author thanks Lvzhou Chen for asking him whether graph $3$-manifold groups lie in the family Lex, for teaching him about Lex groups, and for formulating a definition similar to Definition \ref{starn}, which in turn stems from Lvzhou's discussions with Thorben Kastenholz.

\section{Preliminary on graph $3$-manifolds}\label{preliminarygraph}

In this section, we review some basic properties of graph $3$-manifolds, which will be used in the proofs of Theorems \ref{polyfree} and \ref{lex}. We will review more technical results on graph $3$-manifolds in Section \ref{moregraph}.

\subsection{Slopes on the torus}\label{slopes}
We will use various notions of slopes on the torus $T^2$, and we set up some notations here. We fix an ordered basis $\{\alpha,\beta\}$ of the homology group $H_1(T^2;\mathbb{Z})\cong \mathbb{Z}^2$.



An {\it oriented multi-slope} $c$ on $T^2$ is an isotopy class of disjoint unions of oriented essential simple closed curves on $T^2$. Moreover, since all components of $c$ are parallel to each other, we also require that all components have consistent orientations. An oriented multi-slope corresponds to a unique non-zero element $m\alpha+n\beta\in H_1(T^2;\mathbb{Z})$, and we give it a coordinate $(m,n)\in \mathbb{Z}^2\setminus \{(0,0)\}$. If $c$ consists of a single curve, then $m$ and $n$ are coprime to each other, and we call $c$ an {\it oriented slope}, or simply a {\it slope}.

An {\it unoriented multi-slope} $c$ is obtained from an oriented multi-slope by forgetting the orientation. It corresponds to a non-zero element in $H_1(T^2;\mathbb{Z})$, up to multiplying $\pm1$, and is represented by $\pm(m,n)$ with $(m,n)\in \mathbb{Z}^2\setminus \{(0,0)\}$. We may also use $(m,n)$ to represent the unoriented multi-slope $c$ when no confusion is caused. Again, if $c$ consists of a single curve, we call it an {\it unoriented slope}.

We will use a pair of rational numbers $(r,s)\in \mathbb{Q}^2\setminus \{(0,0)\}$ to represent an {\it oriented rational slope}, which is a multiple of an oriented multi-slope by a positive rational number. 
We also have the notion of {\it unoriented rational slope}, which can be represented by $(r,s)\in \mathbb{Q}^2\setminus \{(0,0)\}$, when no confusion is caused.

The projective class of an oriented rational slope $(r,s)\in \mathbb{Q}^2\setminus \{(0,0)\}$ is uniquely represented by $\frac{r}{s}\in \mathbb{Q}\cup \{\infty\}\subset \mathbb{R}\cup \{\infty\}$. For any $2\times 2$ matrix $A=\begin{pmatrix}
a & b\\
c & d
\end{pmatrix}$ with rational entries, the action of $A$ on column vectors (orientated rational slopes) $(r,s)^t\in \mathbb{Q}^2\setminus \{(0,0)\}$ via the matrix-vector multiplication coincides the action of $A$ on rational numbers (projective slopes) $\frac{r}{s}\in \mathbb{Q}\cup \{\infty\}$ via the fractional linear transformation.

\subsection{Graph $3$-manifolds}\label{basicgraph}
In this paper, a graph $3$-manifold is a compact, connected, orientable, irreducible $3$-manifold with empty or tori boundary, such that it has a nontrivial JSJ decomposition $\mathcal{T}\subset M$ and each component of $M\setminus\setminus \mathcal{T}$ is a Seifert $3$-manifold.
Here we use $M\setminus\setminus \mathcal{T}$ to denote the compact $3$-manifold obtained by taking the path-metric completion of the open manifold $M\setminus \mathcal{T}$.
Each component of $M\setminus\setminus \mathcal{T}$ is called a {\it vertex manifold} of $M$.

The graph $3$-manifold $M$ has a dual graph $\Gamma_M$, where each vertex $v$ of $\Gamma_M$ corresponds to a vertex manifold $M_v\subset M$. Each edge $|e|$ of $\Gamma_M$ connecting vertices $v$ and $w$ corresponds to a JSJ torus $T_{|e|}\subset \mathcal{T}$ adjacent to $M_v$ and $M_w$. 

We will not review the definition of Seifert $3$-manifolds here. Since we only care about $\pi_1(M)$ up to finite-index subgroups, the readers can take the $\tilde{M}$ in the following lemma as a pseudo-definition of graph $3$-manifolds in this paper.

\begin{lemma}\label{simplegraph}
Let $M$ be a graph $3$-manifold, then $M$ has a finite cover $\tilde{M}$ such that the following hold.
\begin{enumerate}
\item The dual graph of $\tilde{M}$ has no self-loop (an edge with identical endpoints). 
\item Each vertex manifold of $\tilde{M}$ is homeomorphic to $F\times S^1$, where $F$ is a compact orientable surface with genus at least $1$ and has at least three boundary components.
\end{enumerate}
\end{lemma}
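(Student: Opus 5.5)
The plan is to use residual finiteness of $\pi_1(M)$ together with the standard fact that Seifert vertex manifolds are finitely covered by products, and to assemble compatible vertex covers into a global cover. I will do this in three stages.

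\emph{Stage 1: products.} First pass to a finite cover in which every vertex manifold is $F\times S^1$ with $F$ orientable. Each vertex manifold $M_v$ is a Seifert manifold with nonempty boundary, so its base orbifold $O_v$ is a compact $2$-orbifold with boundary. Choose a finite orbifold cover of $O_v$ by an orientable surface with boundary; since $H^2$ of a surface with boundary vanishes, the pulled-back circle bundle is trivial, giving a finite cover $F_v\times S^1\to M_v$.

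To make these local covers match along JSJ tori, I will use the standard gluing argument (as in Hempel/Luecke--Wu). Choose an integer $N$ and pass to characteristic-type covers of each $F_v\times S^1$ of the form $F_v'\times S^1$, where $F_v'\to F_v$ is chosen so that each boundary torus is covered by tori that are $N$-fold characteristic covers of the torus. Taking the degree-$N$ cover of the $S^1$ factor as well, every elevation of each boundary torus is then the cover $T^2\to T^2$ corresponding to $N\cdot\mathbb{Z}^2$, independently of the vertex. Since $F_v'$ can be chosen so that all boundary components cover with degree $N$, copies of these vertex covers can be glued along matching tori into a finite cover of $M$. Making this degree bookkeeping work is the main obstacle, and I would handle it exactly as in the proof that graph manifold groups are residually finite, taking suitable numbers of copies of each vertex cover to equalize the counts.

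\emph{Stage 2: genus and boundary.} Next enlarge the genus and the number of boundary components. For $F$ with nonempty boundary and $\chi(F)<0$, a cover $F'$ of degree $d$ dual to a surjection $\pi_1(F)\to\mathbb{Z}/d$ can be chosen to be trivial on the boundary curves, or to unwrap them, as needed. In either case $\chi$ multiplies by $d$, so for large $d$ we get genus $\ge1$ and at least three boundary components. Again these covers must be chosen so that boundary tori are covered uniformly, so that the gluing of Stage 1 can be repeated. Vertex manifolds with $\chi(F)=0$, namely $T^2\times I$, cannot occur in a nontrivial JSJ decomposition. $F$ cannot be a disk, since $M_v$ would then be a solid torus. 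Hence $\chi(F)<0$ in every case.

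\emph{Stage 3: self-loops.} Finally, to remove self-loops, use a cover induced by a homomorphism $\pi_1(M)\to\pi_1(\Gamma_M)\to\mathbb{Z}/2$ that sends every self-loop edge to the generator. For example, map to $H_1(\Gamma_M;\mathbb{Z}/2)$ and compose with a functional that is nonzero on each self-loop class. Self-loops are distinct basis elements of $H_1(\Gamma)$, so such a functional exists. In the associated double cover each self-loop lifts to an edge joining the two distinct lifts of its vertex. Vertex manifolds lift homeomorphically, so condition (2) is preserved. Edges that are not loops do not become loops. Since lifting loops only creates non-loops, the result satisfies both (1) and (2).
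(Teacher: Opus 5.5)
Your Stage 2 has a genuine gap. Your Stages 1 and 3 are at the paper's own level of detail: the paper simply quotes a reference for the product structure, and your Stage 3 double cover is exactly its ``cut along the self-loop tori and reglue two copies crosswise.'' The problems with Stage 2 are these.

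First, a surjection $\pi_1(F)\to\mathbb{Z}/d$ that kills every boundary curve exists only if $F$ has genus $\geq 1$. A planar surface group is generated by its boundary curves, and nothing in your Stage 1 guarantees positive genus.

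Second, unwrapping works against you. A boundary curve mapping to an element of order $m$ has only $d/m$ lifts, so full unwrapping gives $b'=b$. When $b=1$ the boundary curve is a product of commutators, so no cyclic cover unwraps it at all.

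Third, the inference ``$\chi$ multiplies by $d$, so for large $d$ we get genus $\geq 1$ and at least three boundary components'' does not follow. Riemann--Hurwitz only determines $2g'+b'$. For a boundary-trivial cover one gets $b'=db$ and $g'=d(g-1)+1$, so the genus must already be positive.

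Most seriously, once some boundary circle is unwrapped, the torus covers induced on a JSJ torus from its two sides need not agree, since each side only controls its own boundary curve and its own fiber. So the new pieces need not glue, and ``repeat the gluing of Stage 1'' is precisely the argument that is missing.

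Finally, your assertion that $\chi(F)<0$ automatically fails if $M$ has a JSJ piece homeomorphic to the twisted $I$-bundle over the Klein bottle. Its lifts in Stage 1 are copies of $T^2\times I$. The preimage of the JSJ tori is then not the JSJ decomposition of the cover, and these pieces must first be absorbed into their neighbors.

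The missing idea is to secure positive genus first and then use a single uniform cover that is trivial on all boundary curves. The paper obtains genus $\geq 2$ from its cited construction. In your framework you could instead take $N$ large in Stage 1. If every boundary circle of an orientable degree-$D$ surface cover of a hyperbolic base orbifold $O$ with $b$ boundary circles wraps $N$ times, then $2-2g'=D(\chi^{\mathrm{orb}}(O)+b/N)$, which is negative once $N>b/|\chi^{\mathrm{orb}}(O)|$.

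Then, for every vertex, take the degree-$3$ cyclic cover $F'\to F$ that sends one handle generator to $1\in\mathbb{Z}/3$ and all other generators to $0$, hence every boundary curve to $0$. Every boundary circle then has three homeomorphic lifts, so $b'=3b\geq 3$ and $g'=3g-2\geq 1$. Each JSJ torus now has exactly three homeomorphic lifts on each side, so the products $F'\times S^1$ glue via the original pasting maps into a connected degree-$3$ cover with no bookkeeping. Your Stage 3 then finishes the proof.
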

This result is well known, so we only give a sketchy proof here.

\begin{proof}
By Step I of the proof of \cite[Proposition 5.1]{SW}, $M$ has a finite cover $M'$ such that each vertex manifold of $M'$ is homeomorphic to $\Sigma\times S^1$ for some compact, orientable surface $\Sigma$ with genus at least $2$ and has non-empty boundary.

Since $\Sigma$ has genus at least $2$, it has a degree-$3$ cyclic cover $F$ with at least $3$ boundary components, such that the covering map restricts to each boundary component of $F$ as a homeomorphism to its image. Then $F\times S^1$ is a degree-$3$ cyclic cover of $\Sigma\times S^1$, and we can paste them to get a degree-$3$ cover $M''\to M'$. Then each vertex manifold of $M''$ has at least three boundary components and satisfies item (2).

If the dual graph of $M''$ has self-loops, we can take a double cover $\tilde{M}\to M''$ to get rid of them. Indeed, we can cut $M''$ along all JSJ tori corresponding to self-loops in $\Gamma_{M''}$, and paste two copies of the resulting manifold to get $\tilde{M}$.
\end{proof}

In the following, we assume that all graph $3$-manifolds satisfy the conclusion of Lemma \ref{simplegraph}, unless otherwise stated. We still denote such a graph $3$-manifold by $M$.

We fix an orientation on $M$, then each vertex manifold $M_v\subset M$ has an induced orientation. We also fix an orientation-preserving homemorphism between $M_v$ and $F_v\times S^1$, where $F_v$ is an oriented surface and $F_v\times S^1$ is equipped with the product orientation. Note that such a product structure on $M_v$ is not unique. 

\begin{definition}
For a graph $3$-manifold $M$ (satisfying the conclusion of Lemma \ref{simplegraph}), if each vertex manifold is equipped with a product structure with orientation data as above, then $M$ is called a {\it framed graph $3$-manifold}. 
\end{definition}
We assume that $M$ is a framed graph $3$-manifold in the following, unless otherwise stated.

Let $F$ be a compact, orientable surface with non-empty boundary. Let $q: N\to F\times S^1$ be a finite cover, then $N$ has an induced $S^1$-bundle structure with orientable base surface $\Sigma$, so $N$ is homeomorphic to $\Sigma \times S^1$. However, the product structure of $F\times S^1$ may not lift to a product structure of $N$, since a component of $q^{-1}(F)\subset N=\Sigma\times S^1$ may not project to $\Sigma$ by homeomorphism. For graph $3$-manifolds, if $M$ is a framed graph $3$-manifold and $p:\tilde{M}\to M$ is a finite cover, $\tilde{M}$ may not have an induced frame structure either. So we have the following definition.

\begin{definition}\label{induceframe}

Let $N$ be a Seifert $3$-manifold equipped with a product structure $N=\Sigma\times S^1$, and let $q: N = \Sigma \times S^1\to F\times S^1$ be a covering map. We say that $q$ is a {\it frame preserving finite cover} if it preserves the product structures, i.e. $q$ is the product of two covering maps $\Sigma\to F$ and $S^1\to S^1$.

 Let $M,\tilde{M}$ be two framed graph $3$-manifolds and let $p:\tilde{M}\to M$ be a finite cover. If $p$ restricts to each vertex manifold of $\tilde{M}$ as a frame preserving finite cover to the corresponding vertex manifold of $M$, we say that $p$ is a {\it frame preserving finite cover} between graph $3$-manifolds. In this case, the frame structure on $\tilde{M}$ is uniquely determined by the frame structure on $M$ and the covering map $p$.
\end{definition}

Let $n$ be the number of vertices of $\Gamma_M$. We take a bijection between the set of vertices of $\Gamma_M$ and $\{1,\cdots,n\}$ to label these vertices. If a vertex is labeled by $i$, we denote it by $v_i$, and denote the corresponding vertex manifold of $M$ by $M_i$. We use an arbitrary labeling for now and will not adopt any special labeling until Lemma \ref{tree}.

For an edge $|e|$ in $\Gamma_M$ between $v_i$ and $v_j$ ($i\ne j$ by Lemma \ref{simplegraph} (1)), we give it an arbitrary orientation and denote the oriented edge by $e$, then the orientation reversal of $e$ is denoted by $\bar{e}$. If the initial point of $e$ is $v_i$ and the terminal point is $v_j$, we write $e:v_i\to v_j$. If the initial vertex of $e$ is $v_i$ and we do not care about the terminal point, we write $e:v_i\to $.

An oriented edge $e:v_i\to v_j$ ($i\ne j$) in $\Gamma_M$ corresponds to two boundary components of $M\setminus\setminus \mathcal{T}$. 
We use $T_e$ and $T_{\bar{e}}$ to denote these two oriented boundary components of $M\setminus\setminus \mathcal{T}$, with $T_e\subset \partial M_i$ and $T_{\bar{e}}\subset M_j$. When we paste $M\setminus\setminus \mathcal{T}$ back to obtain $M$, we use an orientation-reversing homeomophism $f_e:T_e\to T_{\bar{e}}$, and we have $f_{\bar{e}}=(f_e)^{-1}$.

On $T_e\subset \partial M_i=F_i\times S^1$, the oriented boundary of $F_i$ and the oriented $S^1$ give two oriented slopes $\alpha_e,\beta_e$, and they form an ordered basis of $H_1(T_e;\mathbb{Z})\cong \mathbb{Z}^2$. Similarly, we equip $T_{\bar{e}}\subset \partial M_j=F_j\times S^1$ with two oriented slopes $\alpha_{\bar{e}},\beta_{\bar{e}}$ that form an ordered basis of $H_1(T_{\bar{e}};\mathbb{Z})\cong \mathbb{Z}^2$. The induced isomorphism $(f_e)_{\#}:H_1(T_e;\mathbb{Z})\to H_1(T_{\bar{e}};\mathbb{Z})$ is represented by a $2\times 2$ matrix with determinant $-1$:
\begin{align}\label{2.1}
(f_e)_{\#}
\begin{pmatrix}
\alpha_e\\
\beta_e
\end{pmatrix}=
\begin{pmatrix}
p_e & q_e \\
r_e & s_e
\end{pmatrix}
\begin{pmatrix}
\alpha_{\bar{e}}\\
\beta_{\bar{e}}
\end{pmatrix}.
\end{align}
Note that $r_e\ne 0$ since $f_e$ does not match the $S^1$-fibers of $M_i$ and $M_j$ (i.e. does not send $\beta_e$ to $\pm\beta_{\bar{e}}$). Since $(f_{\bar{e}})_{\#}=(f_{e})_{\#}^{-1}$, a direct computation gives $r_e=r_{\bar{e}}$.

Any oriented rational slope $c$ on $T_e$ has homology class $u\alpha_e+t\beta_e$, with coordinate $(u,t)\in \mathbb{Q}^2\setminus \{(0,0)\}$.
Then $f_e(c)$ is an oriented rational slope on $T_{\bar{e}}$ with coordinate $(u',t')$ such that
\begin{align}\label{2.2}
\begin{pmatrix}
p_e & r_e \\
q_e & s_e
\end{pmatrix}
\begin{pmatrix}
u\\
t
\end{pmatrix}=
\begin{pmatrix}
u'\\
t'
\end{pmatrix}.
\end{align}
Here $\begin{pmatrix}
p_e & r_e \\
q_e & s_e
\end{pmatrix}$ is the transpose of the matrix in \eqref{2.1}, and we abuse notation to denote it by $f_e\in M_{2\times 2}(\mathbb{Q})$.

A {\it horizontal subsurface} $\Sigma$ in a product Seifert $3$-manifold $F\times S^1$ is an orientable, properly embedded subsurface that is transverse to the $S^1$-fibers everywhere, which is also a fiber surface of $F\times S^1$. The following lemma can be found in \cite[Lemma 1.1]{WY}.
 
\begin{lemma}\label{horizontalsurface}
Let $F\times S^1$ be a product Seifert $3$-manifold such that $F$ has genus at least $1$, and let $T_1,\cdots, T_n$ be all boundary components of $F\times S^1$. Let $c_i$ be an oriented multi-slope on $T_i$ consisting of $k_i$ copies of the oriented rational slope  $(u_i,t_i)$, where $k_i$ is a positive integer. There exists an oriented horizontal subsurface $\Sigma \subset F\times S^1$  such that $\Sigma\cap T_i=c_i$ as oriented multi-slopes if and only if
\begin{enumerate} 
\item all the $u_i$ have the same sign and $k_iu_i=\lambda\ne 0$ is independent of $i$,
\item $k_iu_i,k_it_i$ are integers for all $i=1,\cdots,n$,
\item $\sum_{i=1}^n\frac{t_i}{u_i}=0$.
\end{enumerate}
Here we equip $\Sigma\cap T_i$ with the boundary orientation induced from the orientation of $\Sigma$.
\end{lemma}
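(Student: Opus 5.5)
Necessity is proved first, using the covering projection to $F$; sufficiency is then obtained by building $\Sigma$ as the zero set of a map $\phi:F\times S^1\to S^1$. Throughout, write $\alpha_i,\beta_i$ for the basis of $H_1(T_i)$ given by the oriented boundary of $F$ and the oriented fiber, so $c_i=k_iu_i\alpha_i+k_it_i\beta_i$.

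\emph{Necessity.} Suppose $\Sigma$ is an oriented horizontal surface. Since it is transverse to the fibers, the projection $\Sigma\to F$ is a covering map of some degree $d\geq 1$. Orient $\Sigma$ so that this covering preserves orientation up to a global sign $\epsilon=\pm1$. Each component of $\Sigma\cap T_i$ covers the circle $\partial_i F$, so the $\alpha_i$-coefficient of $c_i$ equals $\epsilon d$ for every $i$. This gives (1) with $\lambda=\epsilon d$, and (2) holds because $c_i$ is an integral class. For (3), fix a primitive fiber class $\beta\in H_1(F\times S^1)$ and the projection $\pi:H_1(F\times S^1)\to H_1(S^1)\cong\mathbb{Z}$. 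Since $\partial\Sigma$ is null-homologous in $F\times S^1$,
\[
\sum_i \pi(c_i)=0 .
\]
Under the product structure, $\pi(\alpha_i)=0$ and $\pi(\beta_i)=1$. Hence $\sum_i k_it_i=0$. Dividing by $k_iu_i=\lambda$ gives $\sum_i t_i/u_i=0$.

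\emph{Sufficiency.} Set $\lambda=k_iu_i$ and $m_i=k_it_i\in\mathbb{Z}$, so that $\sum_i m_i=0$ by (3). I would construct a class $\phi\in H^1(F\times S^1;\mathbb{Z})$ with $\phi(\beta)=\lambda$ and $\phi(\alpha_i)=-m_i$. The value $-m_i$ is forced: on $T_i$ the curve $c_i$ must satisfy $\phi(c_i)=0$, and $\phi(\lambda\alpha_i+m_i\beta_i)=\lambda\phi(\alpha_i)+\lambda m_i$. Such a $\phi$ exists because $H_1(F)$ is free on $\alpha_1,\dots,\alpha_{n-1}$ together with $2g$ further classes, subject only to $\sum\alpha_i=0$. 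We may therefore prescribe $\phi(\alpha_i)=-m_i$ exactly when $\sum m_i=0$, and the genus classes may be assigned arbitrary values.

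Next, represent $\phi$ by a map to $S^1$. Concretely, take a fibration $F\times S^1\to S^1$ whose restriction to each fiber circle is a covering of degree $\lambda$. On the torus $T_i$ the restriction of $\phi$ kills exactly the classes proportional to $(\lambda,m_i)$, so a regular preimage meets $T_i$ in $\gcd(\lambda,m_i)$ parallel curves of that slope. A regular fiber of this fibration is transverse to the $S^1$-fibers, since $\phi$ is nonzero on every fiber, and the oriented preimage represents the Poincaré–Lefschetz dual of $\phi$. The sign of $\lambda$ fixes the orientation. In a normal form $\phi=\lambda\,d\theta+\pi_F^*\omega$ on $F\times S^1$, the fiber is the graph of a multivalued section; it is embedded and horizontal because the $d\theta$-component never vanishes.

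\emph{Main obstacle.} The delicate step is matching the multiplicity. The preimage gives $\gcd(\lambda,m_i)$ curves on $T_i$, whereas the statement requires $k_i$ copies of the rational slope $(u_i,t_i)$. The homology class is right, since the total class on $T_i$ is $(\lambda,m_i)$; the issue is the number of components. To get $k_i$ components instead, I would take a disconnected horizontal surface, namely several parallel fibers of a map representing $\phi/N$ for a suitable $N$, or else reinterpret ``$k_i$ copies'' as the oriented multi-slope class $(k_iu_i,k_it_i)$. The paper's conventions make the latter reading the intended one, since an oriented multi-slope is identified with its nonzero homology class. With that reading, the homology-class construction completes the proof.
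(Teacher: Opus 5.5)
Your proof is correct. It cannot follow the paper's proof, because the paper gives no argument of its own: it quotes \cite[Lemma 1.1]{WY}, which is stated for integral slopes, and adds condition (2) to allow rational $(u_i,t_i)$.

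Your route is self-contained and cohomological. Necessity comes from the covering $\Sigma\to F$ and from projecting $[\partial\Sigma]=0$ to $H_1(S^1)$. Sufficiency comes from realizing the class $\phi$ with $\phi(\beta)=\lambda$, $\phi(\alpha_i)=-m_i$ as the submersion $(x,\theta)\mapsto\lambda\theta+g(x)$ and taking a fiber. This gives more than the citation shows:
\begin{itemize}
\item The surface is automatically a fiber surface, which the paper's definition of horizontal subsurface requires.
\item The $2g$ genus classes are unconstrained, so putting $\phi=1$ on one of them makes $\phi\colon\pi_1\to\mathbb{Z}$ surjective, and the fiber is connected. This recovers Remark \ref{connected} without cutting and regluing along $\gamma\times S^1$.
\item It also shows the genus hypothesis matters only for connectedness, not for the equivalence itself.
\end{itemize}

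There are two things to tighten.

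\textbf{The ``main obstacle'' is not real.} Drop the workaround with parallel fibers of $\phi/N$. An oriented multi-slope is determined by its class in $H_1(T_i;\mathbb{Z})$. The class $k_i(u_i,t_i)=(\lambda,m_i)$ is carried by exactly $\gcd(\lambda,m_i)$ parallel curves, which is what the fiber gives. When $(u_i,t_i)$ is a primitive integral slope, $\gcd(k_iu_i,k_it_i)=k_i$ anyway, so even the literal reading agrees.

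What you must check instead is that a single orientation of $\Sigma$ gives the exact class on every $T_i$:
\begin{itemize}
\item Since $\partial_\theta f=\lambda$ has constant sign, you can orient $\Sigma$ so that $\Sigma\to F$ has local degree equal to the sign of $\lambda$ everywhere.
\item The covering has degree $|\lambda|$, so the $\alpha_i$-coordinate of $\Sigma\cap T_i$ is $\lambda$ for every $i$.
\item Since $f$ is constant on $\Sigma$, this class lies in $\ker(\phi|_{H_1(T_i)})$, which forces it to be exactly $(\lambda,m_i)$.
\item Consistency of orientations holds because $\Sigma\cap T_i$ is a fiber of $f|_{T_i}$.
\end{itemize}

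\textbf{The sign $\epsilon$ in the necessity direction must be derived, not chosen.} $\Sigma$ is given with its orientation. Every component of $\Sigma$ covers $F$ and hence meets every $T_i$. The consistent-orientation requirement on $c_i$ then forces all components to have the same sign relative to $F$.
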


The statement of Lemma \ref{horizontalsurface} only requires that $(u_i,t_i)$ is a rational slope, while \cite[Lemma 1.1]{WY} requires that $(u_i,t_i)$ is an (integer) slope, so we need item (2) in Lemma \ref{horizontalsurface}.

\begin{remark}\label{connected} 
Actually, we can make $\Sigma$ connected. If $\Sigma$ has more than one component, since it is horizontal in $F\times S^1$ and intersects with each boundary component along parallel curves with consistent orientations, $\Sigma$ consists of parallel copies of a connected, oriented, horizontal subsurface. Since $F$ has genus at least $1$, there exists a non-separating simple closed curve $\gamma \subset F$. Then $T=\gamma \times S^1$ is a torus that is transverse to $\Sigma$, and $T\cap \Sigma$ consists of $m$ copies of parallel curves for some positive integer $m$. We cut $\Sigma$ along $T$ and re-paste the $m$ pairs of curves by a $1$-shift, to get a connected oriented horizontal subsurface in $F\times S^1$ with the same boundary slopes as $\Sigma$.
\end{remark}

Now we review non-positively curved metrics and the virtual fiberness of graph $3$-manifolds. More details on this topic can be found in \cite{BS}.

For graph $3$-manifolds with boundary, they always admit non-positively curved metrics (\cite{Leeb}) and are always virtually fibered (\cite{WY}). So we mainly focus on closed graph $3$-manifolds in this paper. In the following, $M$ will denote a closed framed graph $3$-manifold satisfying the conclusion of Lemma \ref{simplegraph}.


Let $M_i$ be the vertex manifold of $M$ corresponding to a vertex $v_i$ of $\Gamma_M$. Let $e:v_i\to v_j$ be an oriented edge in $\Gamma_M$, and let $c_e$ be the oriented slope on $T_{e}$ given by the $S^1$-fiber of $M_j$ (which is the $\beta_{\bar{e}}$ in equation \eqref{2.1}). By equation \eqref{2.1}, the coordinate of $c_e$ with respect to $M_i$ is $(r_e,-p_e)$. The {\it charge} of $M_i\subset M$ is defined by 
\begin{align}\label{2.3}
z_i=\sum_{e:v_i\to } \frac{p_e}{r_e},
\end{align}
where the sum runs over all oriented edges in $\Gamma_M$ with initial vertex $v_i$.
Note that $z_i$ is independent of the orientation of $c_e$, and it equals the Euler number of the closed Seifert manifold obtained by Dehn filling $M_i$ along all $c_e$. This definition of charge may differ from that in some other papers by a sign, but that does not matter, since we only care whether the charge is zero.

A closed framed graph $3$-manifold is called {\it chargeless} if $z_i=0$ for all vertices $v_i$ in $\Gamma_M$. Then we have the following relation among chargelessness, virtual fiberness, and non-positively curved metrics.
\begin{theorem}\label{chargeless}
Let $M$ be a closed framed graph $3$-manifold.
\begin{enumerate}
\item If $M$ is chargeless, then it admits a non-positively curved metric.
\item If $M$ admits a non-positively curved metric, then $M$ virtually fibers over the circle.
\end{enumerate}
\end{theorem}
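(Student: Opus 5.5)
Both items are classical: (1) is due to Leeb and Buyalo--Svetlov, and (2) follows from Wang--Yu, Svetlov and Liu. The plan is to reprove (2) with the tools already set up, namely Lemma \ref{horizontalsurface} and Remark \ref{connected}, and to recall a metric construction for (1).

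\emph{Item (1).} I would build the metric piece by piece.
\begin{enumerate}
\item Put a hyperbolic metric with totally geodesic boundary on each $F_i$, and flatten a collar of each boundary component to a Euclidean annulus.
\item Take the product with a circle of length $\ell_i$, and shear the metric near each $T_e$, i.e.\ change which slope is orthogonal to $\beta_e$.
\item Glue the flat tori $T_e$ and $T_{\bar e}$ by $f_e$.
\end{enumerate}
Requiring each $f_e$ to be an isometry gives, for every edge $e$, linear equations on the lengths of $\partial F_i$, the $\ell_i$, and the shear parameters. The shears at the boundary tori of a single $M_i$ are not independent: their sum is constrained by the total twisting of $M_i$. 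A direct computation should identify this sum with the charge $z_i=\sum_{e:v_i\to}p_e/r_e$. So chargelessness is exactly the condition for the system to be solvable, and we can take $\ell_i$ and the shears to realise it. The main check here is that the gluing matrix \eqref{2.2} lets the isometry conditions decouple edge by edge once the twist sum vanishes.

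\emph{Item (2).} I would find a horizontal surface in each vertex manifold whose boundary slopes match under the gluing maps, and then glue them into a fibre. On $T_e\subset\partial M_i$ prescribe the rational slope with coordinate $(u_e,t_e)$, and let $f_e(u_e,t_e)=(u_{\bar e},t_{\bar e})$ by \eqref{2.2}. We need
\[
\sum_{e:v_i\to} \frac{t_e}{u_e}=0 \quad\text{for all } i,
\]
with all $u_e$ of the same sign at each vertex. After scaling, items (1) and (2) of Lemma \ref{horizontalsurface} then hold with $k_eu_e=\lambda_i$. The gluing also forces compatibility of the multiplicities, $\lambda_i/u_e=\lambda_j/u_{\bar e}$ for each edge.

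The NPC hypothesis enters through the Buyalo--Svetlov characterisation. $M$ admits a non-positively curved metric if and only if there are positive weights $a_i$ (essentially fibre lengths) such that the charge system $z_i$, twisted by the weighted edge data $a_j/(a_i r_e)$, is solvable. From such a solution I would write down the slopes explicitly. Take $u_e$ proportional to $a_j$ divided by a fixed sign choice, and choose $t_e$ so that $t_e/u_e$ absorbs the term $-p_e/r_e$ plus the solution of the linear system.

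With rational data in hand, clear denominators by a common multiple. Apply Lemma \ref{horizontalsurface} in each $M_i$, and make each surface connected via Remark \ref{connected}. Glue the pieces across the JSJ tori into a closed surface $S\subset M$ that is transverse to all Seifert fibres. Such an $S$ is non-separating and has a product neighbourhood. Its complement fibres over an interval after passing to a finite cover adjusting the multiplicities, so $M$ virtually fibres.

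The main obstacle is this sign and compatibility bookkeeping across edges: the $u$'s must have consistent sign at every vertex and the multiplicities must agree across every JSJ torus. This is where I expect to need the precise form of the Buyalo--Svetlov/Wang--Yu criterion. If that fails, the fallback is to quote \cite{Liu}: NPC graph manifold groups are virtually special, and hence \cite[(H.20)]{AFW} gives virtual fibering.
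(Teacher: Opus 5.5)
\textbf{Item (2): the direct construction has a genuine gap.} Your plan builds a horizontal surface in $M$ itself, but a non-positively curved graph manifold need not contain one.

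Glue two copies of $F\times S^1$ ($F$ of genus $1$ with three boundary components) along all three pairs of boundary tori by the flip $\alpha_e\mapsto\beta_{\bar e}$, $\beta_e\mapsto\alpha_{\bar e}$. Then $p_e=0$ and $r_e=1$, so $M$ is chargeless and hence NPC. Suppose $S$ were horizontal, with $k_eu_e=\mu_1$ on $\partial M_1$ and $k_{\bar e}u_{\bar e}=\mu_2$ on $\partial M_2$. By \eqref{2.2} it would need $k_e(t_e,u_e)=\pm k_{\bar e}(u_{\bar e},t_{\bar e})$, so $t_e/u_e=\pm\mu_2/\mu_1$. Then Lemma~\ref{horizontalsurface}~(3) in $M_1$ reads $(\pm1\pm1\pm1)\mu_2/\mu_1=0$, which is impossible.

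The underlying reason is that, once the $\mu_i$ are fixed, matching across $T_{|e|}$ leaves no freedom in $t_e$. It forces $t_e/u_e=(\pm\mu_j-p_e\mu_i)/(r_e\mu_i)$, so the balance at $v_i$ is $z_i=\sum_{e:v_i\to v_j}\pm\mu_j/(r_e\mu_i)$, with edge coefficients exactly $\pm1/r_e$. There is nothing left for $t_e$ to ``absorb''. The data an NPC metric provides (in the usual normal form: fibre lengths $\ell_i$, angles $\phi_e\in(0,\pi)$ between adjacent fibres on flat JSJ tori) satisfy the analogous balance with coefficients $\cos\phi_e/r_e$ up to sign, which are strictly smaller in absolute value.

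The missing idea is to pass to a finite cover \emph{before} building any surface. This cover spreads the lifts of each JSJ torus among lifts of the adjacent pieces, turning $(C-W)\Lambda=0$ with $|c_{ij}|<b_{ij}$ into an exact system upstairs, as in Lemmas~\ref{cover} and~\ref{existsolution}. Lemma~\ref{horizontalbasic} then produces the surface. You would also need an approximation argument to make $\Lambda$ rational and totally non-zero. The cover you invoke at the end, after $S$ exists, cannot do this job.

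Your fallback, \cite{Liu} plus \cite[(H.20)]{AFW}, is valid; the paper uses this route in its introduction. The paper's own proof of (2) is simply the citation \cite[Corollary 2.4]{BS}.

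\textbf{Item (1).} This is sound in outline and differs from the paper, which just quotes \cite[Theorem 4.9]{BS}. The decoupling you flag works if you take the horizontal direction of $M_i$ on $T_e$ to be the neighbour's fibre $c_e=(r_e,-p_e)$. With fibre length $\ell_i$, and with $L_e$ and $\sigma_e$ the horizontal length and shear of $\alpha_e$, this forces $\sigma_e=\ell_ip_e/r_e$ and $L_e=\ell_j/|r_e|$.
\begin{itemize}
\item Both sides then induce the same flat torus, with orthogonal fibres of lengths $\ell_i$ and $\ell_j$.
\item Any boundary lengths $L_e$ are realised by a hyperbolic metric on $F_i$.
\item Non-positive curvature requires the twisting form to be closed (otherwise O'Neill's formula gives positive mixed curvature), and closedness is exactly $\sum_e\sigma_e=\ell_iz_i=0$.
\end{itemize}
Two corrections are needed. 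First, the isometry conditions are quadratic, not linear. Second, chargelessness is sufficient but not ``exactly'' the solvability condition, since other angles between adjacent fibres give NPC metrics with non-zero charges.
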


Theorem \ref{chargeless} (1) follows from \cite[Theorem 4.9]{BS}. The chargeless condition implies the function $s:U\to \{0,\pm1\}$ in \cite[Theorem 4.9]{BS} is zero. Theorem \ref{chargeless} (2) is exactly \cite[Corollary 2.4]{BS}.


\section{Graph $3$-manifold groups are virtually poly-free}\label{polyfreeproof}

In this section, we will prove Theorem \ref{polyfree} and Corollary \ref{morepolyfree}.
The proof of Theorem \ref{polyfree} is given by the following result.

\begin{theorem}\label{polyfreesubgroups}
Let $M$ be a compact graph $3$-manifold, then $G=\pi_1(M)$ contains a sequence of subgroups 
$$G>G_1\vartriangleright G_2 \vartriangleright G_3,$$ such that the following hold.
\begin{enumerate}
\item $G_1$ is a finite-index subgroup of $G$.
\item $G_1/G_2$ is isomorphic to a finitely generated free group.
\item $G_2/G_3\cong \mathbb{Z}$.
\item $G_3$ is isomorphic to a free group (possibly infinitely generated).
\end{enumerate}
\end{theorem}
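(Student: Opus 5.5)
The plan is to realize the three steps of the filtration geometrically: the dual graph gives the free quotient $G_1/G_2$, and a fibration over the circle of the cover corresponding to the universal cover of the dual graph gives $G_2/G_3\cong\mathbb{Z}$ with a free kernel.

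\textbf{Step 1: the free quotient.} By Lemma \ref{simplegraph}, pass to a finite cover $\tilde M$ with $G_1=\pi_1(\tilde M)$. Give it a frame structure, so that each vertex manifold is $F_v\times S^1$ with $F_v$ of genus $\ge 1$ and at least three boundary components, and $\Gamma=\Gamma_{\tilde M}$ has no self-loops. Collapsing each vertex manifold to a point gives a map $\tilde M\to\Gamma$ that is $\pi_1$-surjective. Hence there is a surjection $G_1\to\pi_1(\Gamma)$, a finitely generated free group. Let $G_2$ be its kernel, the normal closure of the vertex groups. Then $G_2=\pi_1(\hat M)$, where $\hat M\to\tilde M$ is the cover induced from the universal cover $\hat\Gamma\to\Gamma$. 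Here $\hat M$ is an infinite graph manifold whose dual graph is the tree $\hat\Gamma$, and each of its vertex pieces is a copy of some $F_v\times S^1$.

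\textbf{Step 2: an infinite horizontal surface.} The goal is to build a properly embedded, oriented, connected surface $S\subset\hat M$ that is horizontal in every vertex piece. Enumerate the vertices of the tree $\hat\Gamma$ so that each vertex after the first is adjacent to exactly one earlier vertex (its parent); this is possible since $\hat\Gamma$ is a countable connected tree. In the root piece, choose any horizontal surface by Lemma \ref{horizontalsurface}. Now let $w$ be a later vertex with parent $v$. The parent's surface meets the shared torus in $k$ parallel copies of a slope. Since $r_e\ne 0$, this slope is not the fiber of $w$, so in the coordinates of $w$ it has $u\ne 0$; via \eqref{2.2} this fixes $\lambda=ku$ and the sign of $u$. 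On the remaining (at least two) boundary tori of the piece at $w$, choose rational slopes with the same sign of $u$ and the same $\lambda$ such that $\sum t_i/u_i=0$. Then choose the denominators so that conditions (1)–(3) of Lemma \ref{horizontalsurface} hold. That lemma produces a horizontal surface in the piece at $w$ extending the prescribed boundary, and by Remark \ref{connected} it may be taken connected. Gluing these pieces along the matching oriented multicurves gives $S$. Since $\hat\Gamma$ is a tree, no consistency condition around cycles ever arises, so the induction never gets stuck.

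\textbf{Step 3: fibering and freeness of the kernel.} In each piece, a horizontal surface is a fiber of a fibration $F_v\times S^1\to S^1$. Because the boundary multicurves agree as oriented multi-slopes, these fibrations can be isotoped to agree on the JSJ tori. They then glue to a fibration $\hat M\to S^1$ with fiber $S$; equivalently, algebraic intersection with $S$ defines a homomorphism $\phi:G_2\to\mathbb{Z}$. This $\phi$ is nonzero, hence surjective after rescaling, and its kernel is $G_3=\pi_1(S)$. The surface $S$ is connected because its pieces are connected and are glued along the tree. It is noncompact, since it has infinitely many pieces, each with nonempty boundary. The fundamental group of a connected noncompact surface is free (possibly infinitely generated), which gives (4). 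Items (1)–(3) come from Steps 1 and 3.

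\textbf{Main obstacle.} The step I expect to need the most care is Step 3. One has to verify that the piecewise fibrations, each with possibly disconnected or multi-curve boundary behaviour, really glue to a global fibration of $\hat M$. One also has to confirm that $\ker\phi$ is exactly $\pi_1(S)$, rather than just a group acting on a tree with $\mathbb{Z}$ edge stabilizers. If the isotopy-gluing argument becomes awkward, the fallback is to argue directly: define $\phi$ as the dual cohomology class of $S$ and identify the infinite cyclic cover of $\hat M$ with $S\times\mathbb{R}$. This identification can be checked piece by piece, since each piece is $(\text{horizontal fiber})\times\mathbb{R}$ in its cyclic cover.
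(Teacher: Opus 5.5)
\textbf{Gap in Step 2.} The step that fails is the sentence ``Since $r_e\ne 0$, this slope is not the fiber of $w$.'' The condition $r_e\neq 0$ only says that the fibers of $v$ and $w$ are different slopes on the shared torus. Equivalently, the fiber of $w$, which has coordinate $(r_e,-p_e)$ in the frame of $v$, has non-zero $u$-coordinate as seen from $v$. That makes the fiber of $w$ an \emph{admissible} boundary slope for a horizontal surface in the piece at $v$. Lemma \ref{horizontalsurface} does not stop the surface you built at $v$ from meeting the shared torus in a multiple of the fiber of $w$. Your root surface, chosen arbitrarily, could do exactly this. If it does, the slope has $u=0$ in the coordinates of $w$, condition (1) of Lemma \ref{horizontalsurface} fails, and no horizontal surface in the piece at $w$ extends it. So the induction can get stuck, and the reason is unrelated to cycles in the graph. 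Your claim that the tree structure alone guarantees the induction never gets stuck is therefore not justified.

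\textbf{The fix.} You need a genericity condition carried through the induction; this is condition (ii) in the paper's proof. Every slope placed on an outgoing torus must avoid the fiber of the neighbouring piece across that torus, not just the fiber of the current piece. This is easy to arrange because each piece has at least two free boundary tori. With a common $u$, the solutions of $t_1+\cdots+t_n=-t_0$ include the family $(t_1,\dots,t_n)=l(-n+1,1,\dots,1)+(0,\dots,0,-t_0)$. Each $t_i$ depends non-trivially on $l$, and each torus forbids at most one value of $t_i$, so all but finitely many integers $l$ work. For the root piece, use $l(-n+1,1,\dots,1)$ with $n\ge 3$.

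\textbf{Rest of the argument.} With that condition added, the rest goes through. The difficulty you flag in Step 3 is not the real one. Your fallback is essentially the paper's argument. Each piece's surface is connected, so the class is surjective on each vertex group. Hence each piece lifts to (fiber)$\times\mathbb{R}$ in the cyclic cover, and these lifts glue along cylinders to a space homotopy equivalent to $S$. One small correction: when $\partial M\neq\emptyset$ the tree may be finite, so $S$ may be compact with boundary rather than non-compact. Its fundamental group is still free.
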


\begin{proof}
{\bf Step I. } Construction of $G_1<G$ with finite-index. 

We take a finite cover $M_1\to M$ as given in Lemma \ref{simplegraph}, and let $G_1=\pi_1(M_1)$. We equip $M_1$ with a frame structure. By Lemma \ref{simplegraph} (2), each vertex of the dual graph $\Gamma_{M_1}$ has valence at least $3$. So $\pi_1(\Gamma_{M_1})$ is a finitely generated non-abelian free group. 

{\bf Step II.} Construction of $G_2\vartriangleleft G_1$ such that $G_1/G_2$ is free. 

Let $q_1:M_1\to \Gamma_{M_1}$ be the natrual projection, let $G_2$ be the kernel of $(q_1)_*:\pi_1(M_1)\to \pi_1(\Gamma_{M_1})$, and let $p_2:M_2\to M_1$ be the infinite-sheet cover of $M_1$ corresponding to $G_2<G_1=\pi_1(M_1)$. Then $G_1/G_2\cong \pi_1(\Gamma_{M_1})$ is free.

Let $\mathcal{T}_1\subset M_1$ be the torus decomposition of $M_1$, then $\mathcal{T}_2=p_2^{-1}(\mathcal{T}_1)\subset M_2$ is an infinite union of disjoint tori in $M_2$. Each vertex manifold $M_v$ of $M_2\setminus\setminus \mathcal{T}_2$ is a compact product manifold $F\times S^1$ that maps to $M_1$ by a homeomorphism to its image. So $M_2$ has an induced frame structure. The dual graph $\Gamma_{M_2}$ of $M_2$ with respect to $\mathcal{T}_2$ is the universal cover of $\Gamma_{M_1}$, which is an infinite tree.

{\bf Step III.} Construction of a free subgroup $G_3\vartriangleleft G_2$ such that $G_2/G_3\cong \mathbb{Z}$.

We will construct a connected, non-compact, oriented, properly embedded subsurface $\Sigma\subset M_2$, such that it intersects with each vertex manifold of $M_2$ at a connected horizontal subsurface. 

We start with an arbitrary vertex $v_1$ of $\Gamma_{M_2}$ and let $\{v_1\}=K_1\subset \cdots \subset K_i\subset \cdots \subset \Gamma_{M_2}$ be a sequence of (connected) finite subtrees of $\Gamma_{M_2}$, such that $K_i\setminus K_{i-1}$ has only one vertex and $\cup_{i=1}^{\infty}K_i=\Gamma_{M_2}$. For each $i$, let $M_{2,i}$ be the compact submanifold of $M_2$ corresponding to $K_i$, then we want to inductively construct a connected, oriented, properly embedded subsurface $\Sigma_i\subset M_{2,i}$, such that the following hold.
\begin{enumerate}
\item[(i)] $\Sigma_i$ intersects with each vertex manifold of $M_{2,i}$ at a connected horizontal subsurface.
\item[(ii)] For each boundary component $T\subset \partial M_{2,i}$, $\Sigma_i$ intersects with $T$ at an oriented multi-slope that is not a multiple of $S^1$-fibers of either vertex manifolds in $M_2$ adjacent to $T$.
\item[(iii)] $\Sigma_i\cap M_{2,i-1}=\Sigma_{i-1}$.
\end{enumerate} 

We start with $K_1=\{v_1\}$ and denote the corresponding vertex manifold by $M_{v_1}=M_{2,1}=F_{v_1}\times S^1$. Let $T_1, \cdots,T_n$ be all boundary components of $M_{v_1}$, and let $f_i$ be the oriented slope on $T_i$ given by the $S^1$-fiber of the vertex manifold adjacent to $M_{2,1}$ along $T_i$. By Lemma \ref{horizontalsurface} and Remark \ref{connected}, oriented slopes $(1,t_1),(1,t_2),\cdots,(1,t_n)$ on $T_1, T_2,\cdots,T_n$ bound a connected, oriented, horizontal subsurface $\Sigma_1$ in $M_{v_1}=M_{2,1}$ if and only if $t_1+\cdots+ t_n=0$. Since $n\geq 3$ (Lemma \ref{simplegraph} (2)), we can take $(t_1,\cdots,t_n)=l(-n+1, 1,\cdots, 1)$ for some $l\in \mathbb{Z}$ such that $(1,t_i)\ne \pm f_i$ holds for all $i$, to construct the desired subsurface $\Sigma_1\subset M_{2,1}$.

Suppose that we have constructed $\Sigma_i\subset M_{2,i}$, then we want to construct $\Sigma_{i+1}\subset M_{2,i+1}$. Let $v$ be the unique vertex in $K_{i+1}\setminus K_i$, and let the corresponding vertex manifold in $M_2$ be $M_v=F_v\times S^1$. Since $\Gamma_{M_2}$ is a tree, $M_{2,i}\cap M_v=T_0'$ is a single torus, and $M_{2,i+1}=M_{2,i}\cup M_v$. Let the boundary components of $M_v$ be $T_0', T_1',\cdots,T_n'$, then Lemma \ref{simplegraph} (2) implies $n\geq 2$.

By the induction hypothesis (ii), $\Sigma_i\cap T_0'$ is not a multiple of the $S^1$-fiber of $M_v$, so its
coordinate $(u,t_0)$ with respect to $M_v=F_v\times S^1$ satisfies $u\ne 0$. Let $f_i'$ be the oriented slope on $T_i'$ given by the $S^1$-fiber of the vertex manifold adjacent to $M_v$ along $T_i'$. By Lemma \ref{horizontalsurface} and Remark \ref{connected}, together with $(u,t_0)$ on $T_0'$, oriented multi-slopes $(u,t_1),\cdots,(u,t_n)$ on $T_1',\cdots,T_n'$ bound a connected, oriented, horizontal subsurface $\Sigma_v$ in $M_v$ if and only if $t_0+t_1+\cdots+ t_n=0$. Since $n\geq 2$, we can take $(t_1,\cdots,t_n)=l(-n+1,1,\cdots,1)+(0,0,\cdots,-t_0)$ for some $l\in \mathbb{Z}$, such that $(u,t_i)$ is not a multiple of $f_i'$ for all $i$. Then $\Sigma_{i+1}=\Sigma_i\cup (-\Sigma_v)$ is the desired oriented subsurface in $M_{2,i+1}$. Here, we use $-\Sigma_v$ because we need an orientation-reversing homeomorphism to paste two oriented manifolds along their boundaries. Since $\Sigma_v$ and $\Sigma_i$ are connected, $\Sigma_{i+1}$ is also connected.

This finishes the inductive construction of the subsurface $\Sigma_{i+1}\subset M_{2,i+1}$. Then $\Sigma=\cup_{i=1}^{\infty}\Sigma_i$ is the desired subsurface of $M_2$.

By taking the oriented intersection number with the connected, non-compact, oriented, properly embedded subsurface $\Sigma\subset M_2$, we get a surjective homomorphism $q_2:\pi_1(M_2)\to H_1(M_2;\mathbb{Z})\to \mathbb{Z}$. Alternatively, for each vertex manifold $M_v\subset M_2$, $\Sigma\cap M_v$ is a connected, oriented fiber surface of $M_v$ and it gives a surjective homomorphism $H_1(M_v;\mathbb{Z})\to \mathbb{Z}$. Since all fiber surfaces agree on tori in $\mathcal{T}_2$ and $\Gamma_{M_2}$ is a tree, a Mayer–Vietoris sequence argument gives the same homomorphism $q_2:H_1(M_2;\mathbb{Z})\to \mathbb{Z}$. Although the second viewpoint looks more complicated, it will be useful in our proof of Theorem \ref{lex}.

Let $M_3$ be the covering space of $M_2$ corresponding to $\text{ker}(q_2)$. Then $G_3=\pi_1(M_3)\vartriangleleft G_2=\pi_1(M_2)$ such that $G_2/G_3\cong \mathbb{Z}$, and we check that $G_3$ is isomorphic to an (infinitely generated) free group. For any vertex manifold $M_v\subset M_2$, since $\Sigma_v = \Sigma \cap M_v$ is connected, $q_2|_{\pi_1(M_v)}:\pi_1(M_v)\to \mathbb{Z}$ is surjective.  Then $q_2^{-1}(M_v)\subset M_3$ is connected and is homeomorphic to $\Sigma_v \times \mathbb{R}$. These product manifolds $\Sigma_v \times \mathbb{R}$ are pasted in $M_3$ along a union of cylinders $(\Sigma \cap \mathcal{T}_2)\times \mathbb{R}$. So $M_3$ is homotopy equivalent to the space obtained by pasting compact surfaces $\Sigma_v$ along their boundaries, which is just the non-compact surface $\Sigma$. So $G_3=\pi_1(M_3)\cong \pi_1(\Sigma)$ is isomorphic to an infinitely generated free group.

The proof of Theorem \ref{polyfreesubgroups} is done.
\end{proof}

Theorem \ref{polyfreesubgroups}  implies that the $G_1$ in its statement is poly-free, so $G=\pi_1(M)$ is virtually poly-free, thus Theorem \ref{polyfree} holds true.

To prove Corollary \ref{morepolyfree}, we first prove the following elementary lemma. This lemma is well-known, and we include a proof here for completeness.

\begin{lemma}\label{polyfreeproperty}
\begin{enumerate}
\item Let $G$ be a virtually poly-free group and let $H<G$ be a subgroup, then $H$ is virtually poly-free.
\item Let $1\to A\to G\xrightarrow{q} B\to 1$ be an exact sequence. If both $A$ and $B$ are poly-free, then $G$ is poly-free.
\item Let $A,B$ be two virtually poly-free groups, then the free product $A*B$ is virtually poly-free. 
\end{enumerate}
\end{lemma}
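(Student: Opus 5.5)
For item (1), I would begin by taking a finite-index poly-free subgroup $G'<G$ with a filtration $G'=G'_0\vartriangleright G'_1\vartriangleright\cdots\vartriangleright G'_n=\{1\}$ whose successive quotients are free. Then $H'=H\cap G'$ has finite index in $H$. Next I would intersect the filtration with $H'$, setting $H'_i=H'\cap G'_i$. Normality is inherited, so $H'_{i+1}\vartriangleleft H'_i$. The quotient $H'_i/H'_{i+1}=H'_i/(H'_i\cap G'_{i+1})$ injects into $G'_i/G'_{i+1}$, which is free. By the Nielsen--Schreier theorem, a subgroup of a (possibly infinitely generated) free group is free. Hence $H'$ is poly-free and $H$ is virtually poly-free.

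For item (2), I would take filtrations $A=A_0\vartriangleright\cdots\vartriangleright A_m=\{1\}$ and $B=B_0\vartriangleright\cdots\vartriangleright B_n=\{1\}$ with free quotients. Pull the second back by setting $G_i=q^{-1}(B_i)$ for $0\le i\le n$, so that $G_n=A$. Then continue with $G_{n+j}=A_j$. Preimages preserve normality, and $G_i/G_{i+1}\cong B_i/B_{i+1}$. The $A_j$ are already a subnormal chain with free quotients, so the concatenation is the required poly-free filtration of $G$.

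For item (3), which I expect to be the main step, I would consider the natural surjection $\phi:A*B\to A\times B$. Choose finite-index poly-free subgroups $A'<A$ and $B'<B$, and let $K=\phi^{-1}(A'\times B')$; this has finite index in $A*B$. Consider the restriction $\phi|_K:K\to A'\times B'$. The target $A'\times B'$ is poly-free by item (2) applied to $1\to A'\to A'\times B'\to B'\to 1$. The kernel of $\phi$ is the Cartesian subgroup $\ker\phi$, which is free: by the Kurosh subgroup theorem, a subgroup meeting no conjugate of $A$ or $B$ nontrivially is free. The kernel of $\phi|_K$ is $K\cap\ker\phi=\ker\phi$, so it is free and in particular poly-free. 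Applying item (2) to $1\to\ker\phi\to K\to A'\times B'\to 1$ shows that $K$ is poly-free, and therefore $A*B$ is virtually poly-free. The only nontrivial input in this step is the freeness of $\ker\phi$, which I would cite from Kurosh (or prove via Bass--Serre theory, since $\ker\phi$ acts freely on the Bass--Serre tree of the free product).
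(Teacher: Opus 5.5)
Your proof is correct. Items (1) and (2) follow the paper's arguments exactly. For item (3) you take a genuinely shorter route.

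The paper's argument has two stages.
\begin{enumerate}
\item It shows that $A*B$ is poly-free when $A,B$ are poly-free. This uses the same extension $1\to\ker\phi\to A*B\to A\times B\to 1$ with free Cartesian kernel.
\item It then passes to the virtual case. It chooses \emph{normal} finite-index poly-free subgroups $A_0,B_0$ and takes $H=\ker(A*B\to A/A_0\times B/B_0)$. A second application of Kurosh writes $H$ as a free product of finitely many copies of $A_0$, $B_0$ and a free group. The poly-free case, implicitly iterated over several free factors, then finishes.
\end{enumerate}

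You instead pull $A'\times B'$ back along $\phi$. You observe directly that $K=\phi^{-1}(A'\times B')$ contains $\ker\phi$. Hence $K$ is an extension of the free group $\ker\phi$ by the poly-free group $A'\times B'$, and item (2) applies. When $A',B'$ are normal, your $K$ is exactly the paper's $H$; only the proof that it is poly-free differs.

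Your version needs neither normality of $A',B'$, nor the second use of Kurosh, nor the induction on the number of free factors. The paper's version has one advantage: it produces along the way the standalone fact that free products of poly-free groups are poly-free.

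The only detail you left implicit is why $\ker\phi$ meets every conjugate of $A$ and of $B$ trivially, which is what Kurosh needs. This holds because $\phi(gag^{-1})=\phi(g)(a,1)\phi(g)^{-1}$ is trivial only when $a=1$, and similarly for $B$.
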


\begin{proof}
We first prove item (1). Let $G_0<G$ be a finite-index subgroup that is poly-free, and let $G_0\vartriangleright G_1 \vartriangleright \cdots \vartriangleright G_n=\{1\}$ be a sequence of subgroups such that $G_i/G_{i+1}$ is isomorphic to a free group. Then $H_0=H\cap G_0$ is a finite-index subgroup of $H$, and we have a sequence of subgroups
$$H_0=H\cap G_0> H\cap G_1 >\cdots > H\cap G_n=\{1\}.$$ 
It is clear that $H\cap G_{i+1}$ is a normal subgroup of $H\cap G_i$.
Since $H\cap G_i/H\cap G_{i+1}\cong (H\cap G_i)\cdot G_{i+1}/G_{i+1}<G_i/G_{i+1}$ and a subgroup of a free group is still free (possibly infinitely generated), $H$ is virtually poly-free.

Then we prove item (2). Let $A=A_0\vartriangleright A_1\vartriangleright\cdots \vartriangleright A_n=\{1\}$ and  $B=B_0\vartriangleright B_1\vartriangleright\cdots \vartriangleright B_m=\{1\}$ be sequences of subgroups that certify the poly-freeness of $A$ and $B$. We consider $A$ as a subgroup of $G$, then the following sequence of subgroups certifies the poly-freeness of $G$
$$G=q^{-1}(B_0)\vartriangleright q^{-1}(B_1) \vartriangleright \cdots \vartriangleright q^{-1}(B_m)=A=A_0 \vartriangleright A_1 \vartriangleright \cdots \vartriangleright A_n=\{1\}.$$

Now we prove item (3). We first prove that if $A$ and $B$ are both poly-free, then $A*B$ is poly-free. Let $A*B\to A\times B$ be the natural projection to the direct product, and let $K$ be the kernel. Then we have an exact sequence
$$1\to K\to A*B\to A\times B\to 1.$$ 
The Kurosh subgroup theorem implies that $K$ is a free group (possibly infinitely generated). By item (2), the poly-freeness of $A$ and $B$ implies that $A\times B$ is poly-free. Then the above exact sequence and the poly-freeness of $K$ imply that $A*B$ is poly-free, by applying item (2) again.

Now we suppose that $A$ and $B$ are both virtually poly-free. Let $A_0<A$ and $B_0<B$ be finite-index subgroups that are poly-free, and by the proof of item (1), we can assume they are both normal subgroups. Let $A*B\to A\times B\to A/A_0\times B/B_0$ be the natural projection, and let $H$ be the kernel, which is a finite-index subgroup of $A*B$. The Kurosh subgroup theorem implies that $H$ is a free product of finitely many copies of $A_0$ and $B_0$ and a finitely generated free group. The previous paragraph implies that $H$ is poly-free, so $A*B$ is virtually poly-free.
\end{proof}

Now we prove Corollary \ref{morepolyfree}, which implies that all finitely generated $3$-manifold groups are virtually poly-free.

\begin{proof}
Let $M$ be a (connected) $3$-manifold with a finitely generated fundamental group, and we want to prove $G=\pi_1(M)$ is virtually poly-free.

{\bf Step I.} We first suppose that $M$ is a compact, orientable, irreducible $3$-manifold with empty or tori boundary. 

If $M$ has a finite fundamental group, then $G=\pi_1(M)$ is virtually trivial, thus virtually poly-free.
If $M$ is not a graph $3$-manifold, Theorem \ref{virtualfiber} implies that $M$ has a finite cover $\tilde{M}$ that is a fiber bundle. Since surface groups are poly-free, as discussed in the introduction, Lemma \ref{polyfreeproperty} (2) implies that $\pi_1(\tilde{M})$ is poly-free, thus $G=\pi_1(M)$ is virtually poly-free. If $M$ is a graph $3$-manifold, then Theorem \ref{polyfree} implies that $G=\pi_1(M)$ is virtually poly-free.

{\bf Step II.} Now we suppose that $M$ is compact, orientable, irreducible,a and $\partial$-irreducible. 

Note that the irreducibility implies that $M$ has no $S^2$ boundary component, unless $M=D^3$ with a trivial fundamental group.

For each boundary component $\Sigma$ of $M$ with genus $g\geq 2$, we take a compact hyperbolic $3$-manifold $N_g$ with connected, totally geodesic boundary homeomorphic to $\Sigma$. Then we paste $N_g$ to $M$ by a homeomorphism between $\partial N_g$ and $\Sigma$, to get a $3$-manifold $N$ as in Step I (see \cite[Section 6.3]{Sun}). The inclusion of manifold $M\to N$ induces an injective homomorphism $\pi_1(M)\to \pi_1(N)$. Step I implies that $\pi_1(N)$ is virtually poly-free, then Lemma \ref{polyfreeproperty} (1) implies that $G=\pi_1(M)$ is virtually poly-free.

{\bf Step III.} Then we suppose that $M$ is compact, orientable, and irreducible. 

We can inductively compress $\partial M$ along properly embedded discs to obtain a disjoint union of $3$-manifolds as in Step II. So $\pi_1(M)$ is a free product of virtually poly-free groups and a free group. Then Lemma \ref{polyfreeproperty} (3) implies that $\pi_1(M)$ is virtually poly-free.

{\bf Step IV.} We suppose that $M$ is compact and orientable.

The prime decomposition implies that $M$ is a connected sum of manifolds as in Step III and copies of $S^2\times S^1$. So $\pi_1(M)$ is a free product of virtually poly-free groups and a free group, and it is virtually poly-free by Lemma \ref{polyfreeproperty} (3) again.

{\bf Step V.} We suppose that $M$ is an arbitrary $3$-manifold with finitely generated fundamental group. 

If $M$ is non-orientable, we take the orientable double cover and still denote the manifold by $M$, since we only care about a virtual property. If $M$ is not compact, we take the Scott core $N\subset M$ (\cite{Sco}), which is a compact codimension-$0$ submanifold such that the inclusion-induced homomorphism $\pi_1(N)\to \pi_1(M)$ is an isomorphism. By Step IV, $\pi_1(N)$ is virtually poly-free, so $\pi_1(M)$ is virtually poly-free.

We finish the proof of Corollary \ref{morepolyfree}.
\end{proof}

\section{Preliminary on Lex groups}\label{lexbasics}

In this section, we review some basic properties of Lex groups. The material can be found in \cite{Bou} and \cite{FLM}.

The family Lex of groups is defined via bounded cohomology in \cite{Bou}. This notion is called class $\Lambda$ in \cite{Bou}, and is called family Lex in \cite{FLM}. 
\begin{definition}\label{lexdef}
A group $\Gamma$ lies in the {\it family Lex} if it satisfies the following left-exactness property. For any group $\Lambda$ and any surjective homomorphism $\psi:\Lambda\to \Gamma$, the induced homomorphism $H_b^n(\psi): H_b^n(\Gamma)\to H_b^n(\Lambda)$ in bounded cohomology with trivial real coefficients is injective for all $n$. In this case, we also say that $\Gamma$ is a {\it Lex group}.
\end{definition}

Here $H_b^n(\Gamma)$ denotes the $n$-th bounded cohomology group of $\Gamma$. We will not really work with bounded cohomology of groups in this paper, so we will not review its definition. Instead, we will use the following properties of Lex groups established in \cite{Bou}.

\begin{proposition}\label{lexproperty}
\begin{enumerate}
\item Free groups (finitely or infinitely generated) and surface groups lie in the family Lex (\cite[Remark 7 and Proposition 3.8]{Bou}).
\item If $\Gamma'<\Gamma$ is a finite-index subgroup and $\Gamma'$ lies in the family Lex, then $\Gamma$ lies in the family Lex (\cite[Theorem 3.6]{Bou}).
\item For an exact sequence $$1\to \Gamma \to \Pi \to A\to 1,$$ if $\Gamma$ lies in the family Lex and $A$ is amenable, then $\Pi$ lies in the family Lex  (\cite[Theorem 3.2]{Bou}).
\end{enumerate}
\end{proposition}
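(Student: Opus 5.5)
The plan is to prove the three items in the order (2), (3), (1). Items (2) and (3) rest on the same diagram chase, and the surface group case of (1) reduces to (3).

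\textbf{Common argument for (2) and (3).} In both items we have a subgroup $\Gamma'\le\Pi$, where $\Gamma'$ is Lex and ``$\Pi$ is large relative to $\Gamma'$''. (In item (2) the ambient group is $\Gamma$ and $\Gamma'$ has finite index; in item (3) we take $\Gamma'=\Gamma$, normal with amenable quotient $A$.) Let $\psi:\Lambda\to\Pi$ be surjective, and set $\Lambda'=\psi^{-1}(\Gamma')$. Then $\psi'=\psi|_{\Lambda'}:\Lambda'\to\Gamma'$ is surjective, and $\Lambda'$ has the same index, or the same quotient $\Lambda/\Lambda'\cong\Pi/\Gamma'$, as $\Gamma'$ in $\Pi$. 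Restriction and pullback commute:
\[
\mathrm{res}^{\Lambda}_{\Lambda'}\circ H_b^n(\psi)=H_b^n(\psi')\circ\mathrm{res}^{\Pi}_{\Gamma'}.
\]
Now take $x\in H_b^n(\Pi)$ with $H_b^n(\psi)(x)=0$. Then $H_b^n(\psi')(\mathrm{res}\,x)=0$. Since $\Gamma'$ is Lex, this gives $\mathrm{res}^{\Pi}_{\Gamma'}x=0$. So everything reduces to one claim: $\mathrm{res}^{\Pi}_{\Gamma'}:H_b^n(\Pi)\to H_b^n(\Gamma')$ is injective.

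\textbf{Injectivity of restriction.} This is the main obstacle. I would work with the homogeneous bounded resolution, where $H_b^n(\Pi)$ is the cohomology of the $\Pi$-invariants of $\ell^\infty(\Pi^{n+1})$, and restriction is the inclusion of $\Pi$-invariants into $\Gamma'$-invariants.

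\emph{Finite index.} Average over the finite set $\Pi/\Gamma'$: send a $\Gamma'$-invariant bounded cochain $f$ to
\[
\frac{1}{[\Pi:\Gamma']}\sum_{g\Gamma'\in\Pi/\Gamma'} g\cdot f.
\]
This is well defined because $f$ is $\Gamma'$-invariant. It is a chain map with norm at most $1$, it is $\Pi$-invariant, and it is the identity on $\Pi$-invariant cochains. Hence it is a left inverse of restriction.

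\emph{Amenable quotient.} Fix a left-invariant mean $m$ on $\ell^\infty(A)$. For a $\Gamma$-invariant bounded cochain $f$ and a tuple $\bar g\in\Pi^{n+1}$, define
\[
(Tf)(\bar g)=m\bigl(a\mapsto (\tilde a\cdot f)(\bar g)\bigr),
\]
where $\tilde a\in\Pi$ is any lift of $a\in A$. This is independent of the lift because $f$ is $\Gamma$-invariant, and the function of $a$ is bounded by $\|f\|_\infty$. Linearity of $m$ makes $T$ commute with the coboundary. Left-invariance of $m$ makes $Tf$ $\Pi$-invariant, and $T$ fixes $\Pi$-invariant cochains. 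Thus $T$ again gives a left inverse of restriction on cohomology, which finishes (2) and (3).

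\textbf{Item (1).} For a free group $F$ and a surjection $\psi:\Lambda\to F$, choose preimages of a free basis. This defines a section $s:F\to\Lambda$ with $\psi\circ s=\mathrm{id}$. Then $H_b^n(s)\circ H_b^n(\psi)=\mathrm{id}$, so $H_b^n(\psi)$ is injective; this works for any rank. For a surface group $\pi_1(S)$ with $\chi(S)\le 0$, including $\mathbb{Z}^2$ and the Klein bottle group, choose a surjection $\pi_1(S)\to\mathbb{Z}$. Its kernel is the fundamental group of a non-compact surface (an infinite cyclic cover of $S$), hence a free group, possibly infinitely generated. We obtain
\[
1\to F\to\pi_1(S)\to\mathbb{Z}\to 1,
\]
with $F$ Lex and $\mathbb{Z}$ amenable, so item (3) shows that $\pi_1(S)$ is Lex.
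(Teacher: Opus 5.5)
Your argument is correct, but it does not follow the paper, because the paper gives no proof of this proposition. It imports the three items from Bouarich \cite{Bou}: Remark~7 and Proposition~3.8 for (1), Theorem~3.6 for (2), and Theorem~3.2 for (3).

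You instead give a self-contained proof built on one mechanism. Suppose $\Gamma'<\Pi$ is Lex and $\mathrm{res}^{\Pi}_{\Gamma'}$ is injective on $H_b^n$. Then naturality, applied to the surjection $\psi^{-1}(\Gamma')\to\Gamma'$, forces $\Pi$ to be Lex. Injectivity of restriction comes from an averaging left inverse on the homogeneous complex: a finite average over $\Pi/\Gamma'$ in (2), and an invariant mean on $A$ in (3).

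One fact you use without comment is that $\ell^\infty(\Pi^{\bullet+1})^{\Gamma'}$ computes $H_b^\bullet(\Gamma')$, with restriction induced by the inclusion of the $\Pi$-invariants. This holds because $\ell^\infty(\Pi^{\bullet+1})$ is a strong, relatively injective resolution of $\mathbb{R}$ over $\Gamma'$, and you should cite it.

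For (1), you note that surjections onto free groups split, and that surface groups are free-by-cyclic (as the paper itself says in its introduction). This reduces the surface case to free groups plus item (3).

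As for what each route buys: citing \cite{Bou} is shorter and relies on published proofs. Your version makes the paper self-contained and shows that the only real input is injectivity of restriction. In particular, (2) and (3) are both cases of a co-amenable subgroup. The same averaging, using a $\Pi$-invariant mean on $\ell^\infty(\Pi/\Gamma')$, shows that the Lex property passes from any co-amenable subgroup to the ambient group.
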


We will only use properties of Lex groups in Proposition \ref{lexproperty} to prove Theorem \ref{lex} and Corollary \ref{morelex}. It is unknown whether free products of Lex groups lie in the family Lex (see page 257 of \cite{Bou}).

Proposition \ref{lexproperty} implies that all $3$-manifolds that are virtually fiber bundles have Lex fundamental groups. For completeness, this well-known result will be proved in Section \ref{morelexsection}.





\section{More on graph $3$-manifolds}\label{moregraph}

In this section, we present some technical results on closed framed graph $3$-manifolds that use linear equations to construct subsurfaces. The results in this section either belong to \cite{WY} or are generalizations of results in \cite{WY}. 

\subsection{Construct subsurfaces by solving linear equations}\label{equations}

We first fix some notations. Let $M$ be a closed framed graph $3$-manifold, let the vertices in $\Gamma_M$ be $v_1,\cdots, v_n$, and  let the corresponding vertex manifolds be $M_1,\cdots,M_n$. 

We will assign each $M_i$ a non-zero rational number $\lambda_i\in \mathbb{Q}\setminus \{0\}$. For any oriented edge $e:v_i\to v_j$, recall that $r_e\in \mathbb{Z}\setminus \{0\}$ is defined in equation \eqref{2.1}, which equals the intersection number of $S^1$-fibers of $M_i$ and $M_j$ on $T_{|e|}$ (up to a sign). Let $\epsilon_e\in \{\pm1\}$ be the sign of $r_e$, then $r_e=r_{\bar{e}}$ implies $\epsilon_e=\epsilon_{\bar{e}}$. We equip $T_e$ and $T_{\bar{e}}$ with oriented rational slopes
\begin{align}\label{5.1}
(|r_e|\lambda_i, \lambda_j-\epsilon_ep_e \lambda_i)\text{\ and\ }(r_e\lambda_j,\epsilon_e\lambda_i+s_e\lambda_j),
\end{align}
respectively. It is easy to check that
$$
f_e
\begin{pmatrix}
|r_e|\lambda_i\\ 
\lambda_j-\epsilon_ep_e \lambda_i
\end{pmatrix}=
\begin{pmatrix}
r_e\lambda_j\\
\epsilon_e\lambda_i+s_e\lambda_j
\end{pmatrix},
$$
where $f_e=\begin{pmatrix}
p_e & r_e\\ 
q_e & s_e
\end{pmatrix}$ is given in equation \eqref{2.2}. If we flip the orientation of $e$, this assignment of oriented rational slopes changes by multiplying $\epsilon_e\in \{\pm 1\}$. So it gives a well-defined unoriented rational slope on $T_{|e|}$.

The following result is a restatement of \cite[Lemma 1.4]{WY} in our context.
\begin{lemma}\label{horizontalbasic}
Let $M$ be a closed framed graph $3$-manifold. If there are non-zero rational numbers $\lambda_1,\cdots,\lambda_n$ such that 
\begin{align}\label{5.2}
\sum_{\{(e,j)|e:v_i\to v_j\}}\Big(\frac{\lambda_j}{ |r_e|\lambda_i}-\frac{p_e}{r_e}\Big)=0
\end{align}
holds for all $i$, then there exists a closed, connected, embedded subsurface $\Sigma\to M$ such that the following hold.
\begin{enumerate}
\item For any vertex manifold $M_i$ of $M$, $\Sigma\cap M_i$ is a connected, orientable, horizontal subsurface of $M_i$.
\item For any JSJ torus $T_{|e|}$ of $M$, $\Sigma\cap T_{|e|}$ is a non-zero multiple of the slope assignment \eqref{5.1}, as unoriented multi-slopes.
\end{enumerate}
In this case, $M$ is a virtually fibered graph $3$-manifold.
\end{lemma}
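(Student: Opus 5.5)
The plan is to build $\Sigma$ one vertex manifold at a time using Lemma \ref{horizontalsurface} and Remark \ref{connected}, glue the pieces along the JSJ tori, and then argue virtual fibering.

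\textbf{Step 1: the boundary data at a vertex.} Fix a vertex $v_i$. On each torus $T_e\subset\partial M_i$ with $e:v_i\to v_j$, the assigned oriented rational slope \eqref{5.1} is $(u_e,t_e)=(|r_e|\lambda_i,\ \lambda_j-\epsilon_e p_e\lambda_i)$. Since $\epsilon_e/|r_e|=1/r_e$, its ratio is
$$\frac{t_e}{u_e}=\frac{\lambda_j}{|r_e|\lambda_i}-\frac{p_e}{r_e}.$$
So equation \eqref{5.2} is exactly condition (3) of Lemma \ref{horizontalsurface}. The first coordinates $|r_e|\lambda_i$ all carry the sign of $\lambda_i$.

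\textbf{Step 2: fix integral multiplicities.} Choose a positive integer $k_e$ on each $T_e\subset\partial M_i$ so that $k_e|r_e|\lambda_i=\Lambda_i$ is independent of $e$, and so that $k_eu_e$ and $k_et_e$ are integers. This needs compatibility across each edge, which I will get by scaling all $\lambda$'s by one common integer $N$. Equation \eqref{5.2} is homogeneous in the $\lambda$'s, and the slopes \eqref{5.1} scale linearly, so this scaling is harmless.

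Concretely, take the multi-slope on $T_e$ to be $N\cdot\frac{|r_{e'}|\cdots}{\cdots}$ times the vector \eqref{5.1}. More simply, require that on $T_{|e|}$ both sides see the same vector $m_{|e|}\cdot(\text{slope }\eqref{5.1})$ with $m_{|e|}\in\mathbb{Q}_{>0}$. The compatibility conditions are:
\begin{itemize}
\item[(a)] $m_{|e|}|r_e|\lambda_i=\Lambda_i$ for all $e:v_i\to$;
\item[(b)] $m_{|e|}|r_e|\lambda_j=\Lambda_j$.
\end{itemize}
These give $\Lambda_i/\lambda_i=m_{|e|}|r_e|=\Lambda_j/\lambda_j$. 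So I take $\Lambda_i=C\lambda_i$ with a single positive constant $C$, which forces $m_{|e|}=C/|r_e|$. Choosing $C$ to be a large common multiple clears all denominators and makes every coordinate of $m_{|e|}\cdot\eqref{5.1}$ an integer. This turns conditions (1) and (2) of Lemma \ref{horizontalsurface} into conditions I can arrange; it is the step needing the most care.

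\textbf{Step 3: build and glue the pieces.} Lemma \ref{horizontalsurface} with Remark \ref{connected} now gives a connected oriented horizontal surface $\Sigma_i\subset M_i$ with the prescribed boundary multi-slopes. Because $f_e$ maps \eqref{5.1} on $T_e$ to \eqref{5.1} on $T_{\bar e}$, the boundary curves of $\Sigma_i$ and $\Sigma_j$ match on $T_{|e|}$ as unoriented multi-slopes after isotopy. Gluing them gives a closed embedded surface $\Sigma$ satisfying (1) and (2).

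\textbf{Step 4: orientability and connectedness.} Orientations may disagree across an edge (the $\epsilon_e$ sign issue), so $\Sigma$ is only claimed embedded. Connectedness follows because each $\Sigma_i$ is connected and $\Gamma_M$ is connected.

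\textbf{Step 5: virtual fibering.} Since $\Sigma$ is transverse to the Seifert fibers in every piece, it is a horizontal surface in the sense of \cite{WY}. By \cite[Lemma 1.4 and its consequences]{WY}, a double cover makes it orientable and nonseparating with horizontal pieces. Cutting along it gives a product in each piece, since the complement of a horizontal surface in $F\times S^1$ is an $I$-bundle. Hence the manifold, or a double cover, fibers over the circle with fiber $\Sigma$ or its orientation double cover.

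I expect the main obstacle to be this last step, the orientation issue: handling the signs $\epsilon_e$ so that the pieces glue to a fiber. I would handle it via the double cover trick, since an embedded horizontal surface meeting every fiber transversally is virtually a fiber.
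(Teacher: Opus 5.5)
Your proposal is correct and follows the paper's proof essentially step for step. Your common constant $C$ with multiplicities $m_{|e|}=C/|r_e|$ is exactly the paper's $N$ with $k_e=N/|r_e|$, and the construction via Lemma \ref{horizontalsurface}, Remark \ref{connected}, and gluing along unoriented multi-slopes is the same.

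The only real difference is Step 5. There you lean on \cite{WY} and a loosely justified ``a double cover'' claim. The paper instead argues directly. For orientable $\Sigma$, the $I$-bundle structures on the $M_i\setminus\setminus\Sigma_i$ paste to make $M\setminus\setminus\Sigma$ either $\Sigma\times I$ or two twisted $I$-bundles. A non-orientable $\Sigma$ is first replaced by the orientable horizontal surface $\partial\mathcal{N}(\Sigma)$.
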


Since the assignment of oriented rational slopes in \eqref{5.1} is only well-defined up to a sign, $\Sigma$ may not be orientable. For an embedded subsurface of a graph $3$-manifold $M$ satisfying condition (1), we call it a {\it horizontal subsurface} of $M$.

The proof of this lemma will be used in the proof of Lemma \ref{horizontaladvanced}, so we prove it here.

\begin{proof}

We take a large integer $N$ such that $N\lambda_i$, $\frac{N}{r_e}$, $\frac{N\lambda_i}{r_e}$ are all integers for all vertice $v_i$ and edge $|e|$ in $\Gamma_M$. For any oriented edge $e:v_i\to v_j$, we equip $T_e\subset \partial M_i$ with the oriented rational slope $(|r_e|\lambda_i, \lambda_j-\epsilon_ep_e \lambda_i)$ in equation \eqref{5.1}. 

Now we work on a vertex manifold $M_i$ of $M$. For any oriented edge $e:v_i\to $ in $\Gamma_M$, we take $k_e=\frac{N}{|r_e|}$, then $k_e=k_{\bar{e}}$ holds. Our choice of integers and rational slopes for $e:v_i\to$ clearly satisfies Lemma \ref{horizontalsurface} (1) and (2). The equation in Lemma \ref{horizontalsurface} (3) is 
$$\sum_{\{(e,j)|e:v_i\to v_j\}}\frac{\lambda_j-\epsilon_ep_e \lambda_i}{|r_e|\lambda_i}=\sum_{\{(e,j)|e:v_i\to v_j\}}\Big(\frac{\lambda_j}{|r_e|\lambda_i}-\frac{p_e}{r_e}\Big),$$
which equals $0$ by equation \eqref{5.2}. Then Lemma \ref{horizontalsurface} implies that these oriented multi-slopes bound a connected, oriented, horizontal subsurface $\Sigma_i\subset M_i$.

Since our choice of oriented rational slope on $T_{|e|}$ is well-defined up to a sign and $k_e=k_{\bar{e}}$, for any $e:v_i\to v_j$, $\Sigma_i \cap T_{|e|}$ and $\Sigma_j\cap T_{|e|}$ are equal to each other as unoriented multi-slopes. So the subsurfaces $\Sigma_1,\cdots,\Sigma_n$ can be pasted together along JSJ tori of $M$ to a closed, embedded subsurface $\Sigma\to M$. Since each $\Sigma_i$ is connected, the surface $\Sigma$ is connected. So $\Sigma$ satisfies conditions (1) and (2).
Since we only paste along unoriented multi-slopes, $\Sigma$ might be non-orientable. 

Now we prove that $M$ is virtually fibered.

We first assume that $\Sigma$ is orientable. For any vertex manifold $M_i\subset M$, since $M_i\cap \Sigma$ is horizontal in $M_i$, $M_i\setminus\setminus (M_i\cap \Sigma)$ has an $I$-bundle structure. These $I$-bundle structures can be pasted to an $I$-bundle structure on $M\setminus\setminus \Sigma$. Since $\Sigma$ is connected and orientable, $M\setminus \setminus \Sigma$ has two boundary components. So $M\setminus\setminus \Sigma$ is homeomorphic to either $\Sigma\times I$ or two orientable twisted $I$-bundles over non-orientable surfaces. Then $M$ is fibered in the first case, and has a fibered double cover in the second case.

If $\Sigma$ is non-orientable, a neighborhood $\mathcal{N}(\Sigma)$ of $\Sigma$ in $M$ is an orientable twisted $I$-bundle over $\Sigma$. Then $\partial \mathcal{N}(\Sigma)$ is an orientable, connected, horizontal subsurface in $M$, and the proof reduces to the previous case.
\end{proof}

The equation \eqref{5.2} can be written in a matrix form. Let 
$$y_{ij}=\sum_{e:v_i\to v_j}\frac{1}{|r_e|}\text{\ and\ }z_i=\sum_{e:v_i\to }\frac{p_e}{r_e}.$$ 
We have $y_{ij}=y_{ji}\geq 0$, where the equality holds if and only if there exists $e:v_i\to v_j$. So Lemma \ref{simplegraph} (1) implies $y_{ii}=0$. Note that $z_i$ equals the charge of $M_i$ defined in equation \eqref{2.3}.  Then we define two rational symmetric $n\times n$ matrices by $Y=(y_{ij})_{n\times n}$ and $Z=\text{diag}(z_1,\cdots,z_n)$. Here $\text{diag}(z_1,\cdots,z_n)$ denotes the diagonal $n\times n$ matrix with diagonal entries $z_1,\cdots,z_n$.  Let $\Lambda=(\lambda_1,\cdots,\lambda_n)^t$ be a colomn vector, then equation \eqref{5.2} is equivalent to 
\begin{align}\label{5.3}
(Y-Z)\cdot \Lambda=\vec{0}.
\end{align} We say that $\Lambda$ is {\it totally non-zero} if all of its coordinates are not zero, and we say that $\Lambda$ is a {\it rational vector} if all of its coordinates are rational numbers.

For a closed framed graph $3$-manifold that is not virtually fibered, Lemma \ref{horizontalbasic} implies that equation \eqref{5.3} has no totally non-zero rational solution. The following lemma is a version of Lemma \ref{horizontalbasic} in this case.

\begin{lemma}\label{horizontaladvanced}
Let $M$ be a closed framed graph $3$-manifold. Let $e_*:v_{n-3}\to v_{n-1}$ and $e_{**}:v_{n-2}\to v_n$ be two oriented edges in $\Gamma_M$ connecting these vertices, such that $f_{e_*}=f_{e_{**}}=\begin{pmatrix}
p & r\\
q & s
\end{pmatrix}$.
We suppose that there is a totally non-zero rational vector $\Lambda=(\lambda_1,\cdots,\lambda_n)^t$ such that 
\begin{align}\label{5.4}
(Y-Z)\cdot \Lambda =
(0, \cdots, 0, \delta, -\delta, \delta', -\delta')^t
\end{align}
with $\lambda_{n-3}=-\lambda_{n-2}=\lambda$, $\lambda_{n-1}=-\lambda_n=\lambda'$, and $\delta,\delta'\in \mathbb{Q}$. Then there exists a compact surface $\Sigma$ with boundary and a map $f:\Sigma\to M$ such that the following hold.
\begin{enumerate}
\item The map $f:\Sigma\to M$ is an embedding in the interior of $\Sigma$.
\item $f^{-1}(T_{|e_*|}\cup T_{|e_{**}|})=\partial \Sigma$.
\item For any vertex manifold $M_i\subset M$, $\Sigma\cap M_i$ is a connected, orientable, horizontal subsurface of $M_i$.
\item For any JSJ torus $T_{|e|}\subset M$ with $|e|\ne |e_*|$ or $|e_{**}|$, $\Sigma\cap T_{|e|}$ is a multiple of the slope assignment \eqref{5.1}, as unoriented multi-slopes.
\item Let $N=M\setminus\setminus (T_{|e_*|}\cup T_{|e_{**}|})$, items (1) and (2) give a proper embedding $\Sigma\to N$. Then the intersection of $\Sigma$ with $T_{e_*},T_{e_{**}},T_{\bar{e}_*},T_{\bar{e}_{**}}$ are multiples of the following rational slopes, with the same multiplicative constant, as unoriented multi-slopes:
\begin{itemize}
\item $\Sigma\cap T_{e_*}$ and $\Sigma\cap T_{e_{**}}$: 
\begin{align}\label{5.5}
(|r|\lambda,\lambda'-\epsilon_r p\lambda-\delta|r|),
\end{align}
\item $\Sigma\cap T_{\bar{e}_*}$ and $\Sigma\cap T_{\bar{e}_{**}}$: 
\begin{align}\label{5.6}
(|r|\lambda',\lambda+\epsilon_r s\lambda'-\delta'|r|).
\end{align}
\end{itemize}
Here $\epsilon_r\in \{\pm1\}$ is the sign of $r$.
\end{enumerate}
\end{lemma}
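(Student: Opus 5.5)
The plan is to imitate the proof of Lemma \ref{horizontalbasic}, working on $N=M\setminus\setminus(T_{|e_*|}\cup T_{|e_{**}|})$ rather than on $M$. First I choose a large integer $N_0$ such that $N_0\lambda_i$, $N_0/r_e$, $N_0\lambda_i/r_e$ and $N_0\delta$, $N_0\delta'$ are integers for every vertex $v_i$ and every edge $|e|$. For oriented edges $e:v_i\to v_j$ with $|e|\ne|e_*|,|e_{**}|$ I put the rational slope \eqref{5.1} on $T_e$ with multiplicity $k_e=N_0/|r_e|$. On $T_{e_*},T_{e_{**}}$ I put the modified slope \eqref{5.5}, and on $T_{\bar e_*},T_{\bar e_{**}}$ I put \eqref{5.6}, all with multiplicity $N_0/|r|$. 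I then check Lemma \ref{horizontalsurface} at each vertex $M_i$. Conditions (1) and (2) hold as before, since $k_eu_e=N_0\lambda_i$ at every boundary torus of $M_i$. This is where $\lambda_{n-3}=-\lambda_{n-2}$ and $\lambda_{n-1}=-\lambda_n$ matter: the modified tori of $M_{n-3}$ and $M_{n-2}$ carry first coordinate $|r|\lambda$, which is $|r|\lambda_i$ up to sign, and the sign can be absorbed because we only need unoriented slopes.

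Condition (3) at $v_i$ reads $\sum_{e:v_i\to v_j}\bigl(\frac{\lambda_j}{|r_e|\lambda_i}-\frac{p_e}{r_e}\bigr)=0$, which is the $i$-th row of $(Y-Z)\Lambda$ divided by $\lambda_i$. For $i\le n-4$ the right side of \eqref{5.4} vanishes, so nothing changes. For $i=n-3$ the extra term $-\delta|r|/(|r|\lambda)=-\delta/\lambda$ coming from \eqref{5.5} cancels the row value $\delta/\lambda_{n-3}$. For $i=n-2$ the row value is $-\delta/\lambda_{n-2}=\delta/\lambda$, and the correction at $T_{e_{**}}$, once it is normalized by the coordinate $u=|r|\lambda_{n-2}$ up to sign, cancels it as well. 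The vertices $n-1$ and $n$ work the same way using $\delta'$, and also use that $\bar e_*,\bar e_{**}$ have transpose data $f_{\bar e}=f_e^{-1}$, whose entries are $s,r$ up to the sign coming from $\det=-1$. This gives the $\lambda+\epsilon_r s\lambda'$ term in \eqref{5.6}.

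The main obstacle is this sign bookkeeping. Because $\lambda_{n-2}=-\lambda$, I have to verify that \eqref{5.5}, read as an unoriented slope on $T_{e_{**}}$, is the correct correction for $M_{n-2}$. Concretely, negating the slope to $(-|r|\lambda,\dots)=(|r|\lambda_{n-2},\dots)$ should turn the $\delta$ term into exactly $-\delta/\lambda_{n-2}$. I would do this check first, and I expect it to work because of the symmetric sign pattern $\pm\delta$ and $\pm\lambda$ in the hypothesis. Lemma \ref{horizontalsurface} and Remark \ref{connected} then give a connected oriented horizontal surface $\Sigma_i\subset M_i$ with these boundary multi-slopes.

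Next I glue. Along each torus $T_{|e|}$ with $|e|\ne|e_*|,|e_{**}|$, the slopes \eqref{5.1} match under $f_e$ as unoriented slopes and $k_e=k_{\bar e}$, exactly as in Lemma \ref{horizontalbasic}. Gluing the $\Sigma_i$ there gives a properly embedded surface in $N$, which I call $\Sigma$. It is compact and connected because $\Gamma_M$ minus the two edges is connected: each vertex has valence at least $3$, and if it were disconnected I would only need the pieces anyway. The map $f:\Sigma\to M$ is the composition with the quotient $N\to M$. It embeds the interior and satisfies $f^{-1}(T_{|e_*|}\cup T_{|e_{**}|})=\partial\Sigma$, which gives (1) and (2); (3) and (4) hold by construction. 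For (5), the multi-slopes on the four tori are the rational slopes \eqref{5.5} and \eqref{5.6} with the common constant $N_0/|r|$, since all four multiplicities are equal. Note that $\Sigma\cap T_{e_*}$ and $\Sigma\cap T_{\bar e_*}$ need not match under $f_{e_*}$, which is why (1) only asks for an embedding of the interior.
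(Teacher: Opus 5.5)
Your proposal is correct and follows the paper's proof essentially step by step. The shared ingredients are the same choice of $N$ with $k_e=N/|r_e|$, the slopes \eqref{5.1} away from $e_*,e_{**}$, and the $\delta$- and $\delta'$-corrected slopes on the four cut tori, with the sign flips on $T_{e_{**}}$ and $T_{\bar e_{**}}$ absorbed into the unoriented slope. You also verify Lemma \ref{horizontalsurface}(3) row by row against \eqref{5.4} and glue only along the remaining JSJ tori, as the paper does.

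One small aside: your justification that $\Sigma$ is connected is wrong in general. Valence at least $3$ does not imply that $\Gamma_M$ minus two edges is connected, since $e_*$ could be a bridge. Connectedness is not part of this statement, though; the paper only obtains it in Lemma \ref{horizontaladvanced2}, under the extra hypothesis that $N$ is connected.
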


A picture of $N=M\setminus\setminus (T_{|e_*|}\cup T_{|e_{**}|})$ and the pasting maps $f_{e_*},f_{e_{**}}$ is shown in Figure \ref{Figure1}.

\begin{figure}[htbp]
    \centering
    \def\svgwidth{4in} 
   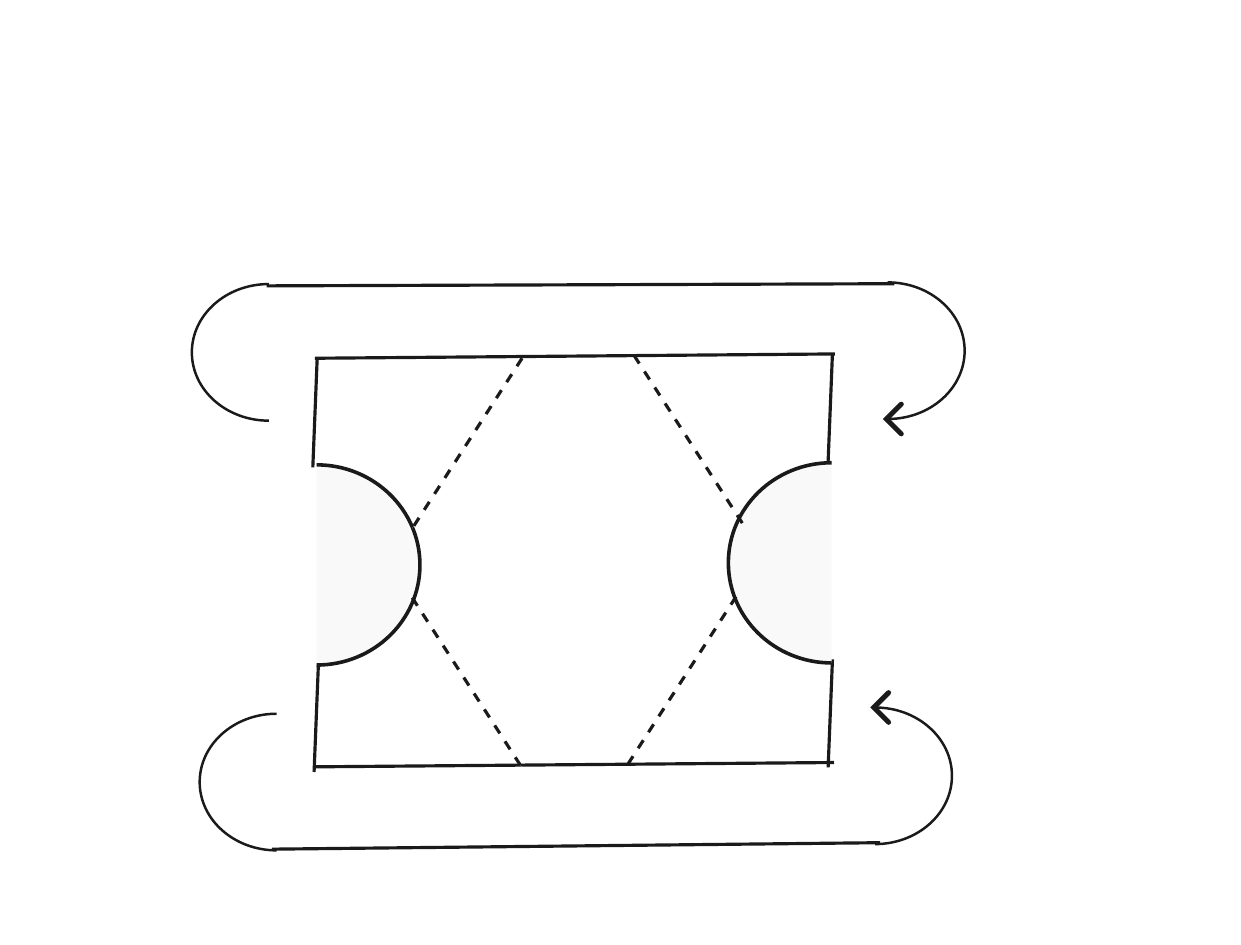
    \caption{A picture of $N=M\setminus\setminus (T_{|e_*|}\cup T_{|e_{**}|})$.}
     \label{Figure1}
\end{figure}

Note that the surface $\Sigma$ may not be orientable.
Apparently, there exists a more complicated version of Lemma \ref{horizontaladvanced} that considers more than two JSJ tori and drops some artificial assumptions. However, the current version is sufficient for our proof of Theorem \ref{lex}, and the notations and computations here are simpler.

\begin{proof}

We take a large integer $N$ as in the proof of Proposition \ref{horizontalbasic} such that $N\delta, N\delta'$ are also integers, and we still take $k_e=\frac{N}{|r_e|}$. For any oriented edge $e:v_i\to v_j$ such that $|e|\ne |e_*|,|e_{**}|$, we still equip $T_e\subset \partial M_i$ with the oriented rational slope $(|r_e|\lambda_i, \lambda_j-\epsilon_ep_e \lambda_i)$ as in equation \eqref{5.1}. 

For any $1\leq i\leq n-4$, by equation \eqref{5.4}, the $i$-th coordinate of $(Y-Z)\cdot \Lambda$ equals $0$. The computation in Lemma \ref{horizontalbasic} implies that the chosen oriented multi-slopes on $\partial M_i$ bound a connected, orientable, horizontal subsurface $\Sigma_i\subset M_i$.

For $i\in\{n-3,n-2,n-1,n\}$, the choice of oriented rational slopes in equation \eqref{5.1} may not give a horizontal subsurface in $M_i$, since $\delta$ and $\delta'$ may not be zero. We will modify the choice of oriented rational slopes on $T_{e_*}\subset \partial M_{n-3},T_{e_{**}}\subset \partial M_{n-2},T_{\bar{e}_*}\subset \partial M_{n-1},T_{\bar{e}_{**}}\subset \partial M_n$ to get a connected, orientable, horizontal subsurface $\Sigma_i\subset M_i$.

The computation is routine but tedious, so we will only give the detailed computation for $T_{e_*}\subset \partial M_{n-3}$. The other three computations are essentially the same.

Recall that we have $e_*:v_{n-3}\to v_{n-1}$. On $T_{e_*}$, we take the oriented rational slope 
$$(|r_{e_*}|\lambda_{n-3},\lambda_{n-1}-\epsilon_{e_*}p_{e_*}\lambda_{n-3}-\delta_{n-3}|r_{e_*}|)=(|r|\lambda,\lambda'-\epsilon_r p\lambda-\delta|r|),$$
and choose $k_{e_{*}}=\frac{N}{|r|}$.
Here $\delta_{n-3}$ denotes the $(n-3)$-th coordinate of the right-hand side of equation \eqref{5.4}. Note that if $\delta=0$, this rational slope is the same as the one in equation \eqref{5.1}. For $M_{n-3}$, the conditions in Lemma \ref{horizontalsurface} (1) and (2) clearly hold. To check Lemma \ref{horizontalsurface} (3), we compute
\begin{align*}
& \sum_{\{(e,j)|e:v_{n-3}\to v_j\}\setminus \{(e_*,n-1)\}}\frac{\lambda_j-\epsilon_ep_e \lambda_{n-3}}{|r_e|\lambda_{n-3}}+\frac{\lambda_{n-1}-\epsilon_{e_*}p_{e_*}\lambda_{n-3}-\delta|r_{e_*}|}{|r_{e_*}|\lambda_{n-3}}\\
=&\sum_{\{(e,j)|e:v_{n-3}\to v_j\}\setminus \{(e_*,n-1)\}}\Big(\frac{\lambda_j}{|r_e|\lambda_{n-3}}-\frac{p_e}{r_e}\Big)+\Big(\frac{\lambda_{n-1}}{|r_{e_*}|\lambda_{n-3}}-\frac{p_{e_*}}{r_{e_*}}\Big)-\frac{\delta}{\lambda_{n-3}}\\
=&\sum_{\{(e,j)|e:v_{n-3}\to v_j\}}\Big(\frac{\lambda_j}{|r_e|\lambda_{n-3}}-\frac{p_e}{r_e}\Big)-\frac{\delta}{\lambda_{n-3}}=0.
\end{align*}
The last equation holds since the first and second terms on the left side equal the $(n-3)$-th coordinates of the left and right hand sides of equation \eqref{5.4}, respectively, up to multiplying $\frac{1}{\lambda_{n-3}}$. So we get a connected, orientable, horizontal subsurface $\Sigma_{n-3}\subset M_{n-3}$ with given boundary slopes.

We choose oriented rational slopes for $e=e_{**},\bar{e}_{*},\bar{e}_{**}$ as the following, and take $k_{e_{**}}=k_{\bar{e}_{*}}=k_{\bar{e}_{**}}=\frac{N}{|r|}$. Then the same computation as above gives connected, orientable, horizontal subsurfaces $\Sigma_{n-2}\subset M_{n-2}, \Sigma_{n-1}\subset M_{n-1},\Sigma_n\subset M_n$.
\begin{itemize}
\item $e_{**}: (|r_{e_{**}}|\lambda_{n-2},\lambda_n-\epsilon_{e_{**}}p_{e_{**}}\lambda_{n-2}-\delta_{n-2}|r_{e_{**}}|)=-(|r|\lambda,\lambda'-\epsilon_r p\lambda-\delta|r|)$,
\item $\bar{e}_*: (|r_{\bar{e}_{*}}|\lambda_{n-1},\lambda_{n-3}-\epsilon_{\bar{e}_{*}}p_{\bar{e}_{*}}\lambda_{n-1}-\delta_{n-1}|r_{\bar{e}_{*}}|)=(|r|\lambda',\lambda+\epsilon_r s\lambda'-\delta'|r|)$,
\item $\bar{e}_{**}: (|r_{\bar{e}_{**}}|\lambda_{n},\lambda_{n-2}-\epsilon_{\bar{e}_{**}}p_{\bar{e}_{**}}\lambda_{n}-\delta_{n}|r_{\bar{e}_{**}}|)=-(|r|\lambda',\lambda+\epsilon_r s\lambda'-\delta'|r|)$.
\end{itemize}
These rational slopes are equal to the ones in item (5), as unoriented rational slopes.

Now we have a collection of connected, horizontal subsurfaces $\Sigma_i\subset M_i$. For any $e:v_i\to v_j$ such that $|e|\ne |e_*|,|e_{**}|$, we know that $\Sigma_i\cap T_e$ and $\Sigma_j\cap T_{\bar{e}}$ give the same unoriented multi-slope on $T_{|e|}$. So we can paste all $\Sigma_i$ along $T_{|e|}$ with $|e|\ne |e_*|,|e_{**}|$ to get the desired compact surface $\Sigma$ and the map $f:\Sigma\to M$. 

Since $\Sigma_{n-3}\cap T_{e_*}$ and $\Sigma_{n-1}\cap T_{\bar{e}_*}$ may not agree on $T_{|e_*|}$, while $\Sigma_{n-2}\cap T_{e_{**}}$ and $\Sigma_n\cap T_{\bar{e}_{**}}$ may not agree on $T_{|e_{**}|}$, we can not close up $\Sigma$ on $T_{|e_*|}$ or $T_{|e_{**}|}$. So $\Sigma$ has non-empty boundary, which is exactly $f^{-1}(T_{|e_*|}\cup T_{|e_{**}|})$.

Then the map $f:\Sigma\to M$ induces a proper embedding $\Sigma\to N=M\setminus \setminus (T_{|e_*|}\cup T_{|e_{**}|})$, and it satisfies all the required conditions.
\end{proof}

The behavior of matrices $Y$ and $Z$ is not nice when taking finite covers of graph $3$-manifolds, so we want to do the following change of variable. 

For each vertex manifold $M_i\subset M$ with $M_i=F_i\times S^1$, we use $\chi_i $ to denote $-\chi(F_i)$, which is a positive integer. Let $b_{ij}=\frac{y_{ij}}{\chi_i\chi_j}\text{\ and\ }w_i=\frac{z_i}{\chi_i^2}$. Then we define two new symmetric $n\times n$ rational matrices by
$B=(b_{ij})_{n\times n}$ and $W=\text{diag}(w_1,\cdots,w_n).$ Let $D=\text{diag}(\frac{1}{\chi_1},\cdots,\frac{1}{\chi_n})$, then we have 
\begin{align}\label{5.7}
B=DYD\text{\ and \ } W=DZD.
\end{align}

We now restate Lemma \ref{horizontaladvanced} in terms of matrices $B$ and $W$.

\begin{lemma}\label{horizontaladvanced2}
Let $M$ be a closed framed graph $3$-manifold as in Lemma \ref{horizontaladvanced}, such that $\chi_{n-3}=\chi_{n-2}=\chi$ and $\chi_{n-1}=\chi_n=\chi'$. If there exists a totally non-zero rational vector $\Lambda=(\lambda_1,\cdots, \lambda_n)^t$ such that 
\begin{align}\label{5.8}
(B-W)\cdot \Lambda=
(0, \cdots, 0, \delta, -\delta, \delta', -\delta')^t
\end{align}
with $\lambda_{n-3}=-\lambda_{n-2}=\lambda$, $\lambda_{n-1}=-\lambda_n=\lambda'$, and $\delta,\delta'\in \mathbb{Q}$. Then there exists a properly embedded, horizontal subsurface $\Sigma\to N=M\setminus\setminus (T_{|e_*|}\cup T_{|e_{**}|})$ as in Proposition \ref{horizontaladvanced}, with item (5) replaced by: 
\begin{itemize}
\item $\Sigma\cap T_{e_*}$ and $\Sigma\cap T_{e_{**}}$ are multiples of the following rational slopes, as unoriented multi-slopes:
\begin{align}\label{5.9}
(\frac{|r|\lambda}{\chi},\frac{\lambda'}{\chi'}-\epsilon_r p\frac{\lambda}{\chi}-\chi \delta|r|),
\end{align}
\item $\Sigma\cap T_{\bar{e}_*}$ and $\Sigma\cap T_{\bar{e}_{**}}$ are multiples of the following rational slopes, as unoriented multi-slopes:
\begin{align}\label{5.10}
(\frac{|r|\lambda'}{\chi'},\frac{\lambda}{\chi}+\epsilon_r s\frac{\lambda'}{\chi'}-\chi'\delta'|r|),
\end{align}
\item all four multiplicative constants are the same.
\end{itemize}

Moreover, if $N$ is connected, then $\Sigma$ is connected.
\end{lemma}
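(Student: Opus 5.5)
The plan is to reduce Lemma \ref{horizontaladvanced2} to Lemma \ref{horizontaladvanced} by a change of variables through the diagonal matrix $D$ of \eqref{5.7}. Given $\Lambda$ satisfying \eqref{5.8}, set $\Lambda'=D\Lambda$, so $\lambda'_i=\lambda_i/\chi_i$. Since $\chi_i>0$, the vector $\Lambda'$ is still a totally non-zero rational vector. The relations $B=DYD$ and $W=DZD$ give
\[(B-W)\Lambda = D(Y-Z)D\Lambda = D(Y-Z)\Lambda'.\]
Multiplying \eqref{5.8} by $D^{-1}=\mathrm{diag}(\chi_1,\dots,\chi_n)$ then yields
\[(Y-Z)\Lambda' = (0,\dots,0,\chi\delta,-\chi\delta,\chi'\delta',-\chi'\delta')^t.\]
Here we use $\chi_{n-3}=\chi_{n-2}=\chi$ and $\chi_{n-1}=\chi_n=\chi'$. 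These same equalities give $\lambda'_{n-3}=-\lambda'_{n-2}=\lambda/\chi$ and $\lambda'_{n-1}=-\lambda'_n=\lambda'/\chi'$. So the hypotheses of Lemma \ref{horizontaladvanced} hold with $\lambda,\lambda',\delta,\delta'$ replaced by $\lambda/\chi$, $\lambda'/\chi'$, $\chi\delta$, $\chi'\delta'$.

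Applying Lemma \ref{horizontaladvanced} gives $\Sigma$ with items (1) through (4). Substituting the new parameters into \eqref{5.5} and \eqref{5.6} gives exactly \eqref{5.9} and \eqref{5.10}. Item (5) of Lemma \ref{horizontaladvanced} already states that the four multiplicative constants agree: in its proof all four multi-slopes use $k=N/|r|$. So the replaced item (5) holds.

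For connectedness, suppose $N$ is connected. In the construction of Lemma \ref{horizontaladvanced}, each $\Sigma_i=\Sigma\cap M_i$ is connected, and the $\Sigma_i$ are glued along every JSJ torus $T_{|e|}$ with $|e|\ne|e_*|,|e_{**}|$. Because $k_e$ is a positive integer, every such torus meets both adjacent pieces non-trivially. Therefore the components of $\Sigma$ correspond to the components of the graph $\Gamma_M$ with the edges $|e_*|,|e_{**}|$ deleted. That graph is exactly the dual graph of $N$, which is connected because $N$ is connected. Hence $\Sigma$ is connected.

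The main obstacle is bookkeeping rather than anything conceptual. One must check that the scaling by $D$ commutes correctly with the sign conventions $\lambda_{n-3}=-\lambda_{n-2}$ and $\lambda_{n-1}=-\lambda_n$. One must also check that the slopes for $e_{**}$ and $\bar e_{**}$ remain negatives of those for $e_*$ and $\bar e_*$. Both checks work because the relevant $\chi$'s coincide in pairs.
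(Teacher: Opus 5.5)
Your proposal is correct and follows the paper's proof: rescale by $D$ to get $(Y-Z)(D\Lambda)=D^{-1}\Delta$, read off the last four coordinates using $\chi_{n-3}=\chi_{n-2}=\chi$ and $\chi_{n-1}=\chi_n=\chi'$, apply Lemma \ref{horizontaladvanced}, and deduce connectedness from the connected pieces $\Sigma\cap M_i$ glued across the dual graph of $N$. One cosmetic point: naming $D\Lambda$ as $\Lambda'$ with entries $\lambda'_i$ clashes with the parameter $\lambda'$ in the statement, so use a different symbol.
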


Again, $\Sigma$ may not be an orientable surface.

\begin{proof}
Let $\Delta=
(0, \cdots, 0, \delta, -\delta, \delta', -\delta')^t$. By equation \eqref{5.7}, equation \eqref{5.8} is equivalent to 
$$D(Y-Z)D\Lambda=\Delta \text{\ and\ }(Y-Z)(D\Lambda)=D^{-1}\Delta.$$

By the assumption on $\chi_i$ for $i=n-3,\cdots,n$, the last four coordinates of $D\Lambda$ are $\frac{\lambda}{\chi},-\frac{\lambda}{\chi},\frac{\lambda'}{\chi'},-\frac{\lambda'}{\chi'}$, and the last four coordinates of $D^{-1}\Delta$ are $\chi\delta,-\chi\delta,\chi'\delta',-\chi'\delta'$.

Then $\hat{\Lambda}=D\Lambda$ and $\hat{\Delta}=D^{-1}\Delta$ form a solution of $(Y-Z)\hat{\Lambda}=\hat{\Delta}$ satisfying the assumption of Lemma \ref{horizontaladvanced}. So item (5) of Lemma \ref{horizontaladvanced} gives the claimed slopes of $\Sigma\cap T_{e_*},\Sigma\cap T_{e_{**}},\Sigma\cap T_{\bar{e}_*},\Sigma\cap T_{\bar{e}_{**}}$.

For the moreover part, since $\Sigma$ intersects with each vertex manifold of $N$ at a connected, horizontal subsurface, the connectedness of $N$ implies the connectedness of $\Sigma$.
\end{proof}

\subsection{Behavior of equations under finite covers of graph $3$-manifolds}\label{finitecover}

Since we want to study finite covers of graph $3$-manifolds, we need to study the behavior of the left-hand side of equation \eqref{5.8} under finite covers. Results in this section essentially belong to \cite{WY}.

We will not study general finite covers between graph $3$-manifolds, but only study some special finite covers constructed in \cite{WY}. The following lemma is a more detailed version of \cite[Lemma 2.4]{WY}.

\begin{lemma}\label{cover}
Let $M$ be a closed framed graph $3$-manifold, with associated $n\times n$ matrices $B=(b_{i,j})_{n\times n}$ and $W=\text{diag}(w_1,\cdots,w_n)$. For any $x_{ij}\in \mathbb{Q}\cap (0,2)$ with $i,j=1,\cdots,n$ such that $x_{ij}=x_{ji}$ holds, there exists a framed graph $3$-manifold $\tilde{M}$, a frame preserving degree-$d$ cover $\phi:\tilde{M}\to M$, and  an order-$2$ deck transformation $\tau:\tilde{M}\to \tilde{M}$ (so $d$ is even) such that the following hold.
\begin{enumerate}
\item For any JSJ torus $\tilde{T}\subset \tilde{M}$, the restriction $\phi|_{\tilde{T}}:\tilde{T}\to M$ is a homeomorphism to a JSJ torus of $M$.
\item For any vertex manifold $M_i\subset M$, $\phi^{-1}(M_i)=\tilde{M}_{2i-1}\cup \tilde{M}_{2i}$ has two components and we label them by $2i-1$ and $2i$. Moreover, the deck transformation $\tau$ swaps $\tilde{M}_{2i-1}$ and $\tilde{M}_{2i}$.
\item If $M_i$ and $M_j$ are two vertex manifolds of $M$ intersecting on a JSJ torus $T\subset M$, then any component of $\phi^{-1}(M_i)$ intersects with any component of $\phi^{-1}(M_j)$ on at least two components of $\phi^{-1}(T)$.
\item Let $X_{ij}=
\begin{pmatrix}
x_{ij} & 2-x_{ij}\\
2-x_{ij} & x_{ij}
\end{pmatrix},$ then the $2n\times 2n$ matrices $\tilde{B}$ and $\tilde{W}$ associated to $\tilde{M}$ satisfy:
\begin{align}\label{5.11}
d\tilde{B}=(b_{ij}X_{ij})_{n\times n},\ d\tilde{W}=\text{diag}(2w_1,2w_1,2w_2,2w_2,\cdots,2w_n,2w_n).
\end{align}
\end{enumerate} 
Here $(b_{ij}X_{ij})_{n\times n}$ denotes the $2n\times 2n$ blocked matrix such that the $2\times 2$ block at the $i$-th row and $j$-th column is $b_{ij}X_{ij}$, for all $i,j=1,\cdots,n$.

Moreover, we can make $d$ as large as we want.
\end{lemma}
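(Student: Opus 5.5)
The cover $\tilde{M}$ will be built in the style of \cite[Lemma 2.4]{WY}: first take frame preserving covers of each vertex manifold separately, then glue them along lifts of the JSJ tori according to a prescribed combinatorial pattern. First clear denominators. Write $x_{ij}=\frac{a_{ij}}{m}$ with a common denominator $m$, so that $0<a_{ij}<2m$ and $a_{ij}=a_{ji}$. The plan is to make every component $\tilde{M}_{2i-1},\tilde{M}_{2i}$ a frame preserving cover of $M_i$ of the same degree $K=d/2$, where $K$ is a large multiple of $m$ (this is where ``$d$ as large as we want'' comes from).

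\textbf{Step 1: the vertex pieces.} Each component is $\tilde{F}\times S^1\to F_i\times S^1$, taken to be a product of a cover $\tilde{F}\to F_i$ of degree $K$ with the identity on $S^1$. Item (1) requires every boundary torus to map homeomorphically. So over each boundary circle $c$ of $F_i$, the surface $\tilde{F}$ must have exactly $K$ boundary circles, each mapping by degree one. Such covers exist for every $K$: take a homomorphism $\pi_1(F_i)\to S_K$ sending each boundary loop to the identity. This is possible because $F_i$ has genus $\geq 1$, so a commutator $[a,b]$ of a permutation pair can absorb the product relation; for example, send all but one boundary loop to $1$ and handle the remaining relation with the genus generators. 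Choosing the images transitive makes the cover connected. We take $\tilde{M}_{2i-1}$ and $\tilde{M}_{2i}$ to be two copies of this cover.

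\textbf{Step 2: the gluing.} For a JSJ torus $T$ of $M$ between $M_i$ and $M_j$, there are $K$ lifts of $T$ on each of $\tilde{M}_{2i-1}$ and $\tilde{M}_{2i}$, and likewise $K$ lifts on each of $\tilde{M}_{2j-1}$ and $\tilde{M}_{2j}$. Pair them as follows: $\frac{a_{ij}K}{2m}$ lifts go $2i-1\leftrightarrow 2j-1$, $\frac{(2m-a_{ij})K}{2m}$ go $2i-1\leftrightarrow 2j$, and symmetrically $2i\leftrightarrow 2j$ and $2i\leftrightarrow 2j-1$. Take $2m\mid K$ and $K\geq 4m$ so these counts are integers, and note each count is $\geq 2$ because $0<a_{ij}<2m$; this gives item (3). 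Each lift is glued by the lift of $f_e$, which makes sense because the tori map homeomorphically. The result is a degree $2K$ cover of $M$. Defining the gluing symmetrically under the swap $2i-1\leftrightarrow 2i$ makes $\tau$, the identification of the two copies, a deck transformation of order $2$, giving item (2).

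\textbf{Step 3: the matrix computation.} We have $\tilde{\chi}=K\chi_i$ for both lifts, and $\tilde{y}_{2i-1,2j-1}$ equals the number of lifts glued between those pieces times $\frac{1}{|r_e|}$, since $r$ is unchanged by the homeomorphic lift. Hence
\[
\tilde{b}=\frac{x_{ij}K\,y_{ij}/2}{K^2\chi_i\chi_j}=\frac{x_{ij}b_{ij}}{2K}=\frac{x_{ij}b_{ij}}{d},
\]
and similarly the off-diagonal entries carry $2-x_{ij}$. For the charge of $\tilde{M}_{2i-1}$, each of its $K$ lifts of each edge contributes the same $\frac{p_e}{r_e}$, because $p_e$ is also unchanged. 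Thus $\tilde{z}=Kz_i$ and $\tilde{w}=\frac{Kz_i}{K^2\chi_i^2}=\frac{w_i}{K}=\frac{2w_i}{d}$, which is \eqref{5.11}. The computation assumes the frame on the cover is the induced one, so the $\alpha,\beta$ bases lift to $\alpha,\beta$ bases; this holds for degree-one boundary behaviour and fiber degree one.

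\textbf{Main obstacle.} The delicate point is Step 1: producing connected covers of $F_i$ of arbitrary degree $K$ whose boundary circles all lift with degree one. I would construct the permutation representation explicitly, sending the boundary generators to $1$ and the genus generators $a,b$ to permutations with $[a,b]=1$ generating a transitive group. Two commuting cyclic shifts, or $a$ a $K$-cycle and $b=1$, work and give a connected cover.
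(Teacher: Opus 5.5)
Your proposal is correct and is the same construction the paper uses, namely that of \cite[Lemma 2.4]{WY}. Both take two copies of $\tilde F_i\times S^1$ for a degree-$\frac d2$ cover $\tilde F_i\to F_i$ that is trivial on boundary circles, glue $\frac{x_{ij}}{4}d$ and $(\frac12-\frac{x_{ij}}{4})d$ lifts of each JSJ torus between the respective copies, and take $\tau$ to be the swap of the copies (equivalently, the deck involution of the double cover over the one-copy cover $M'$). The only difference is that you spell out the boundary-trivial cyclic cover of $F_i$ and the computation of $\tilde B,\tilde W$ via $p_{\tilde e}=p_e$, $r_{\tilde e}=r_e$, $\tilde\chi=\frac d2\chi_i$, which the paper delegates to \cite{WY}.
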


\cite[Lemma 2.4]{WY} only states that item (4) holds. Items (1), the first half of item (2), and the fact that $\phi:\tilde{M}\to M$ is frame preserving directly follow from the construction in \cite[Lemma 2.4]{WY}. Indeed, for any vertex manifold $M_v=F_v\times S^1\subset M$, they take a degree-$\frac{d}{2}$ cover $\tilde{F}_v\to F_v$ that restricts to each boundary component of $\tilde{F}_v$ as a homemorphism to its image, take two copies of $\tilde{F}_v\times S^1$, and paste them together to get $\tilde{M}$. The existence of $\tau$ and its property in item (2) also follows from the construction in \cite[Lemma 2.4]{WY}. Indeed, the covering map $\tilde{M}\to M$ factors through an intermediate cover $M'$ such that $M'$ is obtained by pasting only one copy of $\tilde{F}_v\times S^1$ together, so $\tilde{M}\to M'$ has degree $2$ and the desired deck transformation $\tau$ exists. Here we can make $d$ as large as we want since the proof of \cite[Lemma 2.4]{WY} only requires that $\frac{d}{2}$ is a multiple of the denominator of $\frac{x_{ij}}{2}$ for all $x_{ij}$.

For item (3), item (1) implies that $\phi^{-1}(T)$ has $d$ components. The proof of \cite[Lemma 2.4]{WY} implies that $\tilde{M}_{2i-1}\cap \tilde{M}_{2j-1}$, $\tilde{M}_{2i-1}\cap \tilde{M}_{2j}$, $\tilde{M}_{2i}\cap \tilde{M}_{2j-1}$, $\tilde{M}_{2i}\cap \tilde{M}_{2j}$ contain exactly $\frac{x_{ij}}{4}d$, $(\frac{1}{2}-\frac{x_{ij}}{4})d$, $(\frac{1}{2}-\frac{x_{ij}}{4})d$, $\frac{x_{ij}}{4}d$ components of $\phi^{-1}(T)$, respectively. Then item (3) follows from the assumption $x_{ij}\in (0,2)\cap \mathbb{Q}$ and $d$ can be arbitrarily large, while \cite[Lemma 2.4]{WY} only assumes $x_{ij}\in [0,2]\cap \mathbb{Q}$. 

By items (1) and (2), $\tilde{M}_{2i-1}\to M_{i}$ and  $\tilde{M}_{2i}\to M_{i}$ are both degree-$\frac{d}{2}$ covers that restrict to homeomorphisms on $S^1$-fibers, so the following holds for all $i=1,\cdots,n$.
\begin{align}\label{5.12}
\tilde{\chi}_{2i-1}=\tilde{\chi}_{2i}=\frac{d\chi_i}{2}.
\end{align}
Here, $\tilde{\chi}_i$ is the negative of the Euler characteristic of the base surface of $\tilde{M}_i$.

The following lemma follows from \cite[Lemma 3.1]{WY}.

\begin{lemma}\label{existsolution}
Let $C$ be a symmetric $n\times n$ matrix with rational entries such that for any $i,j=1,\cdots,n$, either $c_{ij}=b_{ij}=0$ or $|c_{ij}|< b_{ij}$ holds. Then $M$ has a degree-$d$ cover $\phi:\tilde{M}\to M$ (depending only on $C$) as in Lemma \ref{cover},  such that the following hold. For any totally non-zero rational vector $\Lambda=(\lambda_1,\cdots,\lambda_n)^t$ such that 
\begin{align}\label{5.13}
(C-W)\Lambda=
(0, \cdots, 0, \delta, \delta')^t,
\end{align}
the vector
$\tilde{\Lambda}=(\lambda_1,-\lambda_1,\cdots,\lambda_n,-\lambda_n)^t$ satisfies
\begin{align}\label{5.14}
d(\tilde{B}-\tilde{W})\tilde{\Lambda}=
(0, \cdots, 0, 2\delta, -2\delta, 2\delta', -2\delta')^t.
\end{align}

Moreover, we can make $d$ as large as we want.
\end{lemma}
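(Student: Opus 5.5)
The plan is to choose the parameters $x_{ij}$ in Lemma \ref{cover} so that the blocks $b_{ij}X_{ij}$ of $d\tilde{B}$ encode the entries $c_{ij}$ of $C$. Then I apply $d(\tilde{B}-\tilde{W})$ to the alternating vector $\tilde{\Lambda}$ and read off the result block by block.

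\textbf{Choice of the cover.} If $b_{ij}=0$, then $c_{ij}=0$ and $x_{ij}$ is irrelevant, so I take $x_{ij}=1$. If $b_{ij}>0$, then $|c_{ij}|<b_{ij}$, and I set
$$x_{ij}=1+\frac{c_{ij}}{b_{ij}}.$$
This is rational, lies in $(0,2)$, and satisfies $x_{ij}=x_{ji}$ by symmetry of $B$ and $C$. The diagonal case needs care: Lemma \ref{simplegraph} (1) gives $b_{ii}=0$, hence $c_{ii}=0$. Lemma \ref{cover} with these $x_{ij}$ then produces $\phi:\tilde{M}\to M$ of degree $d$, which can be taken as large as we want. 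The cover depends only on $C$.

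\textbf{Block computation.} Write $\tilde{\Lambda}$ in $2$-blocks $\lambda_j(1,-1)^t$. The $i$-th $2$-block of $d\tilde{B}\tilde{\Lambda}$ is
$$\sum_j b_{ij}X_{ij}\lambda_j\begin{pmatrix}1\\-1\end{pmatrix}.$$
We have $X_{ij}(1,-1)^t=(x_{ij}-(2-x_{ij}))\,(1,-1)^t=(2x_{ij}-2)(1,-1)^t$. So $b_{ij}X_{ij}(1,-1)^t=2c_{ij}(1,-1)^t$ when $b_{ij}>0$, and it equals $0=2c_{ij}(1,-1)^t$ when $b_{ij}=0$. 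Hence the $i$-th block of $d\tilde{B}\tilde{\Lambda}$ is $2\sum_j c_{ij}\lambda_j\,(1,-1)^t$.

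By \eqref{5.11}, the $i$-th block of $d\tilde{W}\tilde{\Lambda}$ is $2w_i\lambda_i(1,-1)^t$. Subtracting, the $i$-th block of $d(\tilde{B}-\tilde{W})\tilde{\Lambda}$ is $2((C-W)\Lambda)_i\,(1,-1)^t$. By \eqref{5.13}, this is $0$ for $i\le n-2$, equals $2\delta(1,-1)^t$ for $i=n-1$, and equals $2\delta'(1,-1)^t$ for $i=n$. That is exactly \eqref{5.14}.

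\textbf{Remaining checks.} The vector $\tilde{\Lambda}$ is clearly totally non-zero and rational. The "moreover" part is inherited directly from Lemma \ref{cover}.

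The main obstacle is only bookkeeping: matching the block convention of Lemma \ref{cover}, where the $i$-th block rows correspond to $\tilde{M}_{2i-1}$ and $\tilde{M}_{2i}$, with the ordering of $\tilde{\Lambda}$. I also need to handle the degenerate entries where $b_{ij}=0$ forces $c_{ij}=0$, including the diagonal.
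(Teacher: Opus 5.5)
Your proposal is correct and follows the paper's proof: the paper makes the same choice $x_{ij}=1$ when $b_{ij}=0$ and $x_{ij}=1+\frac{c_{ij}}{b_{ij}}$ when $b_{ij}>0$, applies Lemma \ref{cover}, and then appeals to the direct computation of \cite[Lemma 3.1]{WY}. Your block computation, using $X_{ij}(1,-1)^t=(2x_{ij}-2)(1,-1)^t$, is exactly that computation written out.
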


If $b_{ij}=0$, we take $x_{ij}=1$; if $b_{ij}>0$, we take $x_{ij}=1+\frac{c_{ij}}{b_{ij}}\in \mathbb{Q}\cap (0,2)$. We apply Lemma \ref{cover} to this choice of $x_{ij}$ to construct $\tilde{M}$, then the result follows from a direct computation, as in the proof of \cite[Lemma 3.1]{WY}.

\section{A technical result on graph $3$-manifolds}\label{technicalsection}

The main result of this section is Proposition \ref{technicalresult}, which uses all the results in Section \ref{moregraph}. It constructs an interesting finite cover of a non-virtually fibered graph $3$-manifold.

The statement of Proposition \ref{technicalresult} is complicated, and it is based on the following wishful statement. Any closed graph $3$-manifold $M$ has a finite cover $\tilde{M}$ with a JSJ torus $T_{|e|}\subset \tilde{M}$, such that $N=\tilde{M}\setminus\setminus T_{|e|}$ has many horizontal subsurfaces that intersect with each of $T_e,T_{\bar{e}}\subset \partial N$ along all but finitely many unoriented slopes up to multiplicity.  Unfortunately, we cannot prove this wishful statement, but we can prove another version of it that cuts along two JSJ tori of $\tilde{M}$.

\begin{proposition}\label{technicalresult}
Let $M$ be a closed framed graph $3$-manifold that is not virtually fibered. Then there exists a framed graph $3$-manifold $\tilde{M}$, a frame preserving finite cover $\tilde{M}\to M$ with $2n$ vertex manifolds, and the following objects in $\tilde{M}$, such that the statement in the next paragraph holds.
\begin{enumerate}
\item Two oriented edges $e_*:v_{2n-3}\to v_{2n-1}$, $e_{**}:v_{2n-2}\to v_{2n}$ in the dual graph $\Gamma_{\tilde{M}}$ that correspond to JSJ tori $T_{|e_*|},T_{|e_{**}|}\subset \tilde{M}$, such that $\tilde{M}_{2n-1}$ and $\tilde{M}_{2n}$ have non-zero charges and $N=\tilde{M}\setminus\setminus (T_{|e_*|}\cup T_{|e_{**}|})$ is connected.
\item An order-$2$ deck transformation $\tau:\tilde{M}\to \tilde{M}$ that swaps $T_{|e_*|}$ and $T_{|e_{**}|}$, $\tilde{M}_{2n-3}$ and $\tilde{M}_{2n-2}$, $\tilde{M}_{2n-1}$ and $\tilde{M}_{2n}$, respectively.
\item An isomorphism $\iota:H_1(T_{e_*};\mathbb{Q})\to H_1(T_{\bar{e}_*};\mathbb{Q})$.
\end{enumerate}

For $N=\tilde{M}\setminus\setminus (T_{|e_*|}\cup T_{|e_{**}|})$, there are two oriented rational slopes $c_1^*,c_2^*$ on $T_{e_*}$, such that for any oriented rational slope $c$ on $T_{e_*}$ that is not a multiple of $c_1^*,c_2^*$, there exists a connected, properly embedded, horizontal subsurface $\Sigma_c\to N$ such that the following hold.
\begin{enumerate}
\item[(i)] $\Sigma_c$ intersects with each vertex manifold of $N$ at a connected, orientable, horizontal subsurface.
\item[(ii)] $\Sigma_c$ intersects with $T_{e_*}$ and $T_{e_{**}}$ along multiples of $c$ and $\tau(c)$, respectively, as unoriented multi-slopes.
\item[(iii)] $\Sigma_c$ intersects with $T_{\bar{e}_*}$ and $T_{\bar{e}_{**}}$ along multiples of $\iota(c)$ and $\tau(\iota(c))$, respectively, as unoriented multi-slopes.
\item[(iv)] The four multiples in items (ii) and (iii) are equal to each other.
\item[(v)] After composing with the pasting homomorphism $(f_{\bar{e}_*})_{\#}:H_1(T_{\bar{e}_*};\mathbb{Q})\to H_1(T_{e_*};\mathbb{Q})$, the composition $(f_{\bar{e}_*})_{\#}\circ\iota: H_1(T_{e_*};\mathbb{Q})\to H_1(T_{e_*};\mathbb{Q})$ has determinant $1$.
\end{enumerate}
\end{proposition}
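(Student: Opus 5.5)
The plan is to combine Lemma \ref{existsolution} with Lemma \ref{horizontaladvanced2}, after first arranging a suitable labeling of $\Gamma_M$. I begin by choosing two adjacent vertices of $\Gamma_M$ whose vertex manifolds have non-zero charge; I will label them $v_{n-1}$ and $v_n$. Such vertices exist: if every charge were zero, $M$ would be chargeless and hence virtually fibered by Theorem \ref{chargeless}. If the only vertices with nonzero charge are isolated from each other, I will use instead one vertex of nonzero charge and a neighbor; the exact choice will be fixed by what the linear algebra requires. The remaining vertices are labeled $v_1,\dots,v_{n-2}$, and I take the edge $e:v_{n-1}\to v_n$ to be the edge to be cut.

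The key linear-algebra step is to find a symmetric rational matrix $C$ with $|c_{ij}|<b_{ij}$ (and $c_{ij}=0$ wherever $b_{ij}=0$) such that the system
\begin{align*}
(C-W)\Lambda=(0,\dots,0,\delta,\delta')^t
\end{align*}
has a two-parameter family of totally non-zero rational solutions. The parameters are $(\lambda_{n-1},\lambda_n)=(\lambda,\lambda')$, which are free, and $\delta,\delta'$ are then determined. Concretely, I will choose $C$ so that the upper-left $(n-2)\times(n-2)$ block of $C-W$ is invertible; a generic small choice should achieve this. The first $n-2$ coordinates of $\Lambda$ are then uniquely determined by a linear map applied to $(\lambda,\lambda')$. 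For generic $(\lambda,\lambda')$ these coordinates are non-zero, provided none of them is identically zero as a linear function. Since entries of $C$ may be perturbed, I expect to arrange this with an invertibility and connectivity argument along the lines of \cite[Lemma 3.1]{WY}.

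Next I apply Lemma \ref{existsolution} to obtain the cover $\tilde{M}$ with $2n$ vertices, which is independent of $\Lambda$. The edge $e$ lifts to $d$ tori. I choose a lift $e_*:v_{2n-3}\to v_{2n-1}$ and set $e_{**}=\tau(e_*)$; after relabeling the two lifts of each vertex, this makes $e_{**}$ run $v_{2n-2}\to v_{2n}$, and $f_{e_*}=f_{e_{**}}$ because the cover is frame preserving. The lifted vertices keep non-zero charge, since $\tilde{w}=2w/d$. Cutting two of the at least two tori joining the relevant pieces leaves $N$ connected by Lemma \ref{cover}(3). Equation \eqref{5.14} gives exactly hypothesis \eqref{5.8}, with $\chi$'s equal by \eqref{5.12}, so Lemma \ref{horizontaladvanced2} produces a connected horizontal $\Sigma$ with slopes \eqref{5.9} and \eqref{5.10}. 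These slopes are linear in $(\lambda,\lambda')$: since $\delta$ and $\delta'$ are linear in $(\lambda,\lambda')$, the slope on $T_{e_*}$ is $L(\lambda,\lambda')$ and the slope on $T_{\bar e_*}$ is $L'(\lambda,\lambda')$ for rational linear maps $L,L'$. I then define $\iota=L'\circ L^{-1}$, and items (ii)--(iv) follow, with the $\tau$-equivariance coming from the sign pattern $\tilde\Lambda$.

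The main obstacles are invertibility of $L$ and item (v). The excluded slopes $c_1^*,c_2^*$ correspond to the two lines where $\lambda=0$ or $\lambda'=0$. If some other $\lambda_i$ vanishes on a line, that line adds a further excluded slope, so I must arrange that only two lines are bad; this is where the choice of $C$ matters. For (v), $\det(f_{\bar e_*}\circ\iota)=-\det L'/\det L$. I will compute $L$ and $L'$ from \eqref{5.9} and \eqref{5.10}, using $\delta=\partial$-terms of the solved system, and then try to tune the free entries of $C$ adjacent to $v_{n-1}$ and $v_n$ so that this ratio becomes $1$. Failing that, I will adjust using the freedom in $x_{ij}$ or rescale one parameter. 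I expect this determinant normalization, which must be done simultaneously with the nondegeneracy conditions, to be the hardest step.
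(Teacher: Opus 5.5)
Your architecture matches the paper's: a matrix $C$ fed into Lemma \ref{existsolution}, the cover with $\tau$, Lemma \ref{horizontaladvanced2} applied to a two-parameter family, and $\iota=L'\circ L^{-1}$. But the step that produces \emph{only two} exceptional slopes is missing, and the route you sketch works against it. Take $C$ generic and solve $x=-A_{11}^{-1}A_{12}(\lambda,\lambda')^t$ for the first $n-2$ coordinates. Each coordinate is then a linear form in $(\lambda,\lambda')$. It typically involves both variables as soon as $v_n$ has $C$-neighbours among $v_1,\dots,v_{n-2}$, so it vanishes on its own line. That gives up to $n$ degenerate lines, and perturbing $C$ does not remove them. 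Since $\lambda=0$ and $\lambda'=0$ are already bad, every other coordinate must be a non-zero multiple of $\lambda$ or of $\lambda'$.

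The missing idea is structural. You need $\lambda'$ to be invisible to the first $n-2$ equations, and you should build $C$ from a prescribed totally non-zero vector rather than solve for the vector from $C$. The paper does this in four steps:
\begin{enumerate}
\item Pass to the cover of Lemma \ref{firstcover}, so that a non-zero-charge vertex $v_n$ has $\Gamma_M\setminus v_n$ connected.
\item Take a spanning tree in which $v_n$ is a leaf attached to $v_{n-1}$ (Lemma \ref{tree}), and set $c_{ij}=0$ off the tree.
\item Solve for the tree entries from the leaves inward, so that $(P,P^2,\dots,P^{n-1},\lambda_n)$ satisfies the first $n-2$ equations for every $\lambda_n$.
\item Take $P$ large to ensure $|c_{ij}|<b_{ij}$ (Lemma \ref{twosolutions}).
\end{enumerate}
Then $\lambda_1,\dots,\lambda_{n-1}$ are all proportional to one parameter and $\lambda_n$ is independent, which is exactly why only two slopes are excluded. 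Adjacency of two non-zero-charge vertices plays no role. What matters is that $v_n$ has non-zero charge (needed for item (1)) together with this tree structure.

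You also leave invertibility of $L$ and item (v) open, and your fallbacks for (v) would not help. Rescaling a parameter does not change $\iota=L'\circ L^{-1}$, and there is nothing to tune because the ratio is forced. Write $\delta=S_{11}\lambda+S_{12}\lambda'$ and $\delta'=S_{12}\lambda+S_{22}\lambda'$. The cross coefficients coincide because $C-W$ is symmetric; in the paper this is the identity $\delta=\frac{c}{w}\delta'$, with $S_{12}=c_{n-1,n}$. On $\tilde M$ the relevant data are $\tilde\chi=\frac{d\chi}{2}$ and $\tilde\chi'=\frac{d\chi'}{2}$ (where $\chi=\chi_{n-1}$, $\chi'=\chi_n$ in $M$), and the right-hand side is $\frac{2}{d}(\delta,\delta')$. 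With these, the slopes \eqref{5.9} and \eqref{5.10} give
\[
\det L=\frac{4|r|}{d^{2}\chi\chi'}-\frac{2|r|^{2}S_{12}}{d}=-\det L'.
\]
Hence $\det L\neq 0$ as soon as $d$ avoids at most one value, which is possible since $d$ can be taken arbitrarily large. Then $\det\iota=-1$ and $\det\big((f_{\bar e_*})_{\#}\circ\iota\big)=1$ automatically. This is the content of \eqref{6.19} and \eqref{6.20} in the paper.
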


Again, a picture of $N=\tilde{M}\setminus \setminus (T_{|e_*|}\cup T_{e_{**}})$ can be found in Figure \ref{Figure1}. 

Note that we do not claim that $\Sigma_c$ is orientable.

We assume that $c$ is an oriented slope for now. Since $\tau$ is a deck transformation, $c$ and $\tau(c)$ are both (primitive) slopes. However, since $\iota$ is only an isomorphism on homology with rational coefficients, $\iota(s)$ and $\tau(\iota(s))$ are only rational slopes, and the same rational multiple makes them slopes. So items (ii)-(iv) imply that $\Sigma_c\cap T_{e_*}$ and $\Sigma_c\cap T_{e_{**}}$ are both $k$ copies of some unoriened slopes, while $\Sigma_c\cap T_{\bar{e}_*}$ and $\Sigma_c\cap T_{\bar{e}_{**}}$ are both $k'$ copies of some unoriented slopes, but $k\ne k'$ in general.

We first prove a few lemmas.

\begin{lemma}\label{firstcover}
Let $M$ be a closed framed graph $3$-manifold that is not virtually fibered, then $M$ admits a frame preserving finite cover $\tilde{M}$ such that the following hold.
\begin{enumerate}
\item There exists a vertex manifold $\tilde{M}_v\subset \tilde{M}$ with non-zero charge such that $\tilde{M}\setminus\setminus \tilde{M}_v$ is connected.
\item $\tilde{M}$ has at least two vertex manifolds with non-zero charges.
\end{enumerate}
\end{lemma}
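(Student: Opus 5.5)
Since $M$ is not virtually fibered, Theorem \ref{chargeless} shows that $M$ is not chargeless, so some vertex manifold $M_i$ has charge $z_i\neq 0$. The plan is to pass to a frame preserving cover in which this nonzero charge is inherited by several vertex manifolds, and in which one such vertex manifold can be removed without disconnecting the manifold.

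\textbf{Behaviour of charges under the covers of Lemma \ref{cover}.} Apply Lemma \ref{cover} with, for instance, $x_{ij}=1$ for all $i,j$. This gives a frame preserving degree-$d$ cover $\phi:\tilde{M}\to M$ in which $\phi^{-1}(M_i)=\tilde{M}_{2i-1}\cup\tilde{M}_{2i}$. By \eqref{5.11} and \eqref{5.12},
$$\tilde{w}_{2i-1}=\tilde{w}_{2i}=\frac{2w_i}{d}.$$
Since $\tilde{w}_k=\tilde{z}_k/\tilde{\chi}_k^2$ and $\tilde{\chi}_{2i-1}=\tilde{\chi}_{2i}=d\chi_i/2$, this gives
$$\tilde{z}_{2i-1}=\tilde{z}_{2i}=\frac{d}{2}\,z_i\neq 0.$$
One can also see this directly, because the charge is a sum over boundary tori and each torus maps homeomorphically by Lemma \ref{cover} (1). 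So $\tilde{M}$ already has at least two vertex manifolds with non-zero charge, namely $\tilde{M}_{2i-1}$ and $\tilde{M}_{2i}$. This proves item (2).

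\textbf{Item (1): the complement stays connected.} Take $\tilde{M}_v=\tilde{M}_{2i-1}$. The dual graph of $\tilde{M}\setminus\setminus\tilde{M}_v$ is $\Gamma_{\tilde{M}}$ with $v_{2i-1}$ and its incident edges deleted. We show every remaining vertex can reach $v_{2i}$ without passing through $v_{2i-1}$.

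1. Let $\tilde{M}_k$ be a remaining vertex manifold, lying over $M_j$. Because $\Gamma_M$ is connected, there is a path in $\Gamma_M$ from $v_j$ to $v_i$.
2. Lift this path step by step, choosing at each step a lift that avoids $v_{2i-1}$ until the last step, which lands on $v_{2i}$. Lemma \ref{cover} (3) makes this possible: for any edge $M_a\cap M_b$ of $\Gamma_M$, every component of $\phi^{-1}(M_a)$ meets every component of $\phi^{-1}(M_b)$. So from any lift over $M_a$ we may step to either lift over $M_b$.
3. Whenever the path passes through $v_i$ before its end, use the lift $v_{2i}$ rather than $v_{2i-1}$.

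Hence every remaining vertex is connected to $v_{2i}$ in $\Gamma_{\tilde{M}}\setminus\{v_{2i-1}\}$, so $\tilde{M}\setminus\setminus\tilde{M}_v$ is connected.

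\textbf{Remaining checks.} We must confirm that $\tilde{M}$ still satisfies Lemma \ref{simplegraph}. It has no self-loops: $M$ has none, and each JSJ torus of $\tilde{M}$ maps homeomorphically to a JSJ torus of $M$ joining distinct vertices, so its two sides lie over distinct vertex manifolds. Each vertex manifold is a cover of some $F\times S^1$ that restricts to homeomorphisms on boundary components, so its base surface still has genus at least $1$ and at least three boundary components. The only delicate point is verifying the charge formula under the cover, and the computation above handles it.
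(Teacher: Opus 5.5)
Your proof is correct, but it takes a different route from the paper's. The paper builds an explicit degree-$2$ cover by hand. It takes a connected double cover $\tilde F_i\to F_i$, trivial on boundary circles, for each neighbour $N_i$ of the charged piece $M_v$, and two copies each of $M_v$ and of the remaining components. Since every $\tilde N_i$ is glued to both copies of $M_v$, every point of $\tilde M\setminus\setminus\tilde M_v$ can be joined to the other copy $\tilde M_v'$. Both copies keep the charge of $M_v$ because they map homeomorphically onto it.

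You instead apply Lemma \ref{cover} with all $x_{ij}=1$. You read off $\tilde z_{2i-1}=\tilde z_{2i}=\frac{d}{2}z_i\neq 0$ from \eqref{5.11} and \eqref{5.12}, and get connectivity from Lemma \ref{cover} (3) by lifting paths in $\Gamma_M$ while steering around $\tilde v_{2i-1}$.

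The paper's argument is elementary and self-contained. It needs only a double cover and no input from the Wilton--Yang construction or the component count that underlies Lemma \ref{cover} (3). Yours is shorter given machinery the paper needs later anyway, and it proves more: every vertex manifold of $\tilde M$ becomes non-separating, and every lift of a charged piece is charged. The cost is a large-degree cover and reliance on item (3) of Lemma \ref{cover}.

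Your closing check of the Lemma \ref{simplegraph} conditions is correct but redundant, since Lemma \ref{cover} already asserts that $\tilde M$ is a framed graph $3$-manifold.
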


\begin{proof}
Since $M$ is not virtually fibered, Theorem \ref{chargeless} implies that $M$ has a vertex manifold $M_v$ with non-zero charge. Since we assume that $M$ satisfies the conclusion of Lemma \ref{simplegraph} (2), each vertex of the dual graph $\Gamma_M$ has valence at least $3$, so $\pi_1(\Gamma_M)$ is a non-abelian free group.

Let $N_1,\cdots, N_k$ be the vertex manifolds of $M$ adjacent to $M_v$, with $N_i=F_i\times S^1$, and let $N_1',\cdots, N_l'$ be the components of $M\setminus \setminus (M_v\cup (\cup_{i=1}^kM_i))$. By Lemma \ref{simplegraph} (2), the genus of $F_i$ is at least $1$, so there exists a connected double cover $\tilde{F}_i\to F_i$ that restricts to a homeomorphism on each boundary component of $\tilde{F}_i$ to its image.

We take one copy of $\tilde{N}_i=\tilde{F}_i\times S^1$ with $i=1,\cdots,k$, two copies of $N_j'$ with $j=1,\cdots,l$, and two copies of $M_v$. By using the pasting maps of $M$, we paste these pieces along their boundaries to get a double cover $p:\tilde{M}\to M$. We denote the two copies of $M_v$ in $\tilde{M}$ by $\tilde{M}_v$ and $\tilde{M}_v'$. Then $\tilde{M}\setminus\setminus \tilde{M}_v$ is connected because any point in it can be connected to $\tilde{M}_v'$. Both $\tilde{M}_v$ and $\tilde{M}_v'$ have the same non-zero charge, since both $p|_{\tilde{M}_v}:\tilde{M}_v\to M_v$ and $p|_{\tilde{M}_v'}:\tilde{M}_v'\to M_v$ are homeomorphisms that preserve $S^1$-fibers of adjacent vertex manifolds.
\end{proof}

We abuse notation and use $M$ to denote the resulting manifold of Lemma \ref{firstcover}, and use $M_v$ to denote the vertex manifold $\tilde{M}_v$. The next lemma constructs a maximal spanning subtree of $\Gamma_M$ and a labeling of the vertices of $\Gamma_M$.

\begin{lemma}\label{tree}
Let $M$ be a closed framed graph $3$-manifold with a vertex manifold $M_v\subset M$ satisfying the conclusion of Lemma \ref{firstcover}, and let $n$ be the number of vertex manifolds of $M$. Then there exists a maximal spanning subtree $T\subset \Gamma_M$ and a labeling of vertices of $\Gamma_M$ by $\{1,\cdots,n\}$ such that the following hold.
\begin{enumerate}
\item The vertex of $T$ corresponding to $M_v$ is labeled by $n$, and we denote $v$ and $M_v$ by $v_n$ and $M_n$, respectively.
\item In the tree $T$, $v_n$ has valence $1$, and the only vertex of $T$ adjacent to $v_n$ is $v_{n-1}$.
\item For any $i\in \{1,\cdots,n\}$, let $T_i$ be the subtree of $T$ spanned by vertices $v_k$ (including $v_i$) such that the unique reduced path from $v_n$ to $v_k$ passes through $v_i$. Then all vertices of $T_i$ have labelings in $[1,i]\cap \mathbb{Z}$.
\end{enumerate}
\end{lemma}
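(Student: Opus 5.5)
The plan is to use only the conclusion of Lemma \ref{firstcover} (1), namely that $\Gamma_M\setminus v$ is connected, where $\Gamma_M\setminus v$ denotes the graph obtained from $\Gamma_M$ by removing $v$ and all edges incident to it. From this we build $T$ directly and then label its vertices by a post-order traversal. The labeling is a purely combinatorial construction on the dual graph, so no $3$-manifold input beyond that lemma is needed.

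\textbf{Step 1: the tree.} Since $\Gamma_M\setminus\setminus M_v$ corresponds to $M\setminus\setminus M_v$, which is connected, the graph $\Gamma_M\setminus v$ is connected. Choose a maximal spanning tree $T'$ of $\Gamma_M\setminus v$, which contains all vertices other than $v$. The vertex $v$ has valence at least $3$ in $\Gamma_M$, and it has no self-loops. So we may pick one edge $|e_0|$ joining $v$ to some vertex $w\neq v$, and set $T=T'\cup |e_0|$. This is a spanning tree of $\Gamma_M$, and in it $v$ has valence exactly $1$, with unique neighbor $w$.

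\textbf{Step 2: the labeling.} Root $T$ at $v$. For each vertex $u$, the subtree $T_u$ in item (3) is exactly the set of descendants of $u$ (including $u$) in this rooted tree. Label the vertices by a post-order depth-first traversal starting from $v$: recursively label all descendants of a vertex before the vertex itself, and give the labels $1,2,\dots$ in the order vertices are finished.

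\textbf{Step 3: verification.} In a post-order traversal, the descendants of $u$ form a consecutive block of labels ending with the label of $u$. Hence every vertex of $T_u$ has label in $[1,i]$, where $i$ is the label of $u$, which gives item (3). The root $v$ is finished last, so it receives label $n$, which gives item (1). The only child of $v$ is $w$, and $w$ is finished immediately before $v$, so $w$ receives label $n-1$, which gives item (2).

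I expect no real obstacle. The one point to check is that removing $v$ leaves a connected graph, so that $v$ can be made a leaf of a spanning tree; this is exactly what Lemma \ref{firstcover} (1) provides.
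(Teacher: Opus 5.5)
Your proof is correct and follows the paper's argument. The paper also deletes $v$, takes a spanning tree of the remainder (connected by Lemma \ref{firstcover} (1)), attaches $v$ by a single edge, and labels the attaching vertex $n-1$; the only cosmetic difference is that the paper labels the remaining vertices level by level, with labels decreasing in the distance from $v_{n-1}$, whereas you use a post-order traversal, and both work because every vertex gets a larger label than all of its descendants.
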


A picture of the tree $T$ is shown in Figure \ref{Figure2}, which shows the distinguished vertices $v_n,v_{n-1}$, a few vertices $v_j,v_i,v_k$, and the corresponding subtree $T_i$ as defined in Lemma \ref{tree} (3). In this picture, $k<i<j$ holds.

\begin{figure}[htbp]
    \centering
    \def\svgwidth{4in} 
   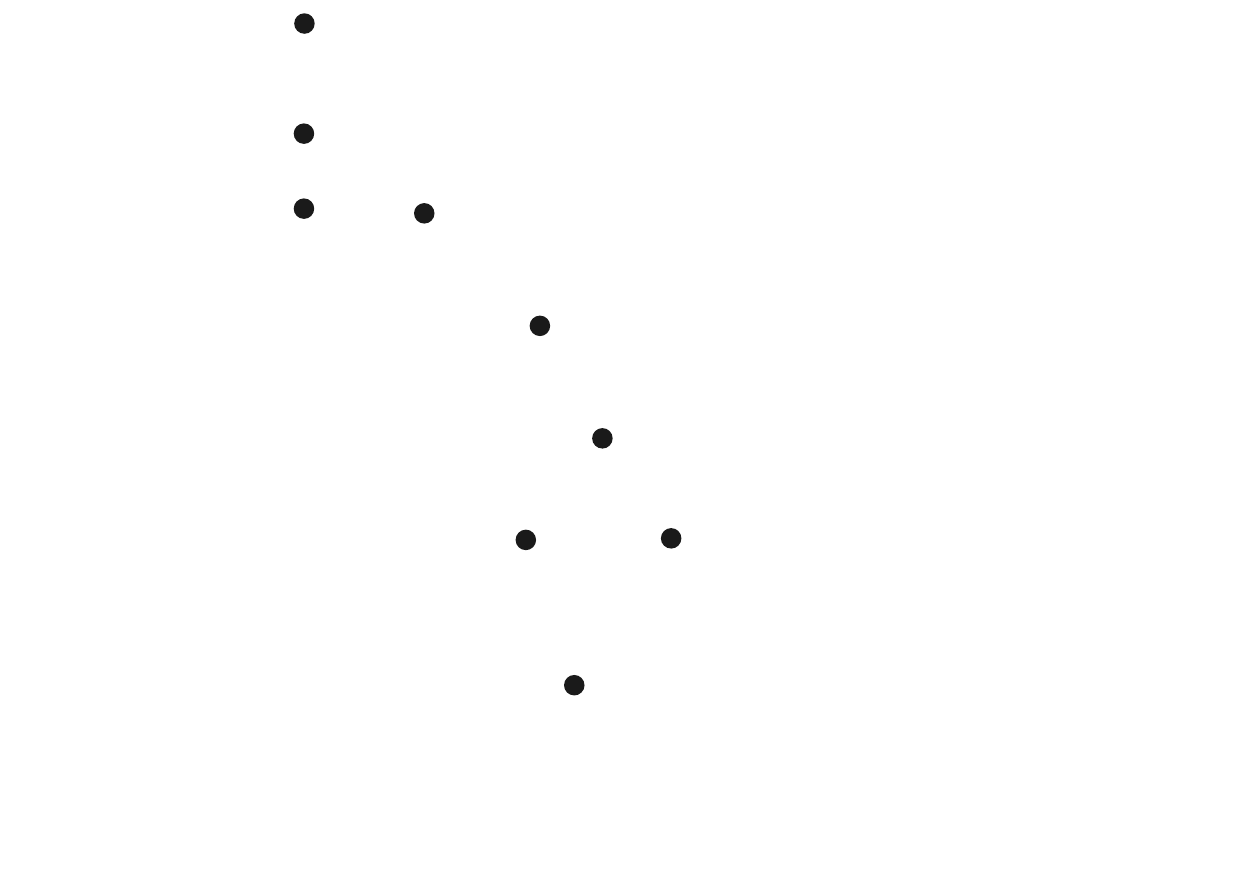
    \caption{A picture of the tree $T$.}
     \label{Figure2}
\end{figure}

\begin{proof}
We label the vertex in $\Gamma_M$ corresponding to $M_v$ by $n$. So we denote $v$ by $v_n$, and denote $M_v$ by $M_n$. Let $\Gamma'$ be the subgraph of $\Gamma_M$ obtained by deleting $v_n$ and all edges adjacent to $v_n$.  By Lemma \ref{firstcover} (1), $M\setminus\setminus M_n$ is connected, so $\Gamma'$ is connected. 

Let $T'$ be a maximal spanning tree of $\Gamma'$, and let $|e|$ be one edge between $T'$ and $v_n$. The desired spanning tree $T$ is the union of $T'$, $|e|$, and $v_n$. We label the end point of $|e|$ in $T'$ by $n-1$, so item (2) holds. 


It remains to label the vertices of $T\setminus \{v_{n-1},v_n\}$ such that item (3) holds, which is quite flexible. For example, suppose there are $k_1$ vertices of $T\setminus \{v_{n-1},v_n\}$ with distance $1$ to $v_{n-1}$, then we label them by $n-2,n-3,\cdots,n-k_1-1$ in an arbitrary way. Then we label the $k_2$ vertices of $T\setminus  \{v_{n-1},v_n\}$ with distance $2$ to $v_{n-1}$ by $n-k_1-2,n-k_1-3,\cdots,n-k_1-k_2-1$ in an arbitrary way. We continue this process to label all vertices of $T$, then item (3) holds.
\end{proof}

The next lemma is a more detailed version of \cite[Lemma 3.2]{WY}.

\begin{lemma}\label{twosolutions}
Let $M$ be a non-virtually fibered, closed framed graph $3$-manifold satisfying the conclusion of Lemma \ref{firstcover}, and we equip $\Gamma_M$ with a spanning tree $T$ and a labeling of vertices satisfying Lemma \ref{tree}. Let $B=(b_{ij})_{n\times n}$ and $W=\text{diag}(w_1,\cdots,w_n)$ be the $n\times n$ matrices associated to $M$ and the given labeling. Then there exists a symmetric rational $n\times n$ matrix $C=(c_{ij})_{n\times n}$ such that the following hold.
\begin{enumerate}
\item For any $i,j=1,\cdots,n$, either $c_{ij}=b_{ij}=0$ or $|c_{ij}|<b_{ij}$ holds.
\item $c_{n-1,n}\ne 0$.
\item There are two totally non-zero rational vectors 
\begin{align}\label{6.1}
\Lambda=(P,P^2,\cdots,P^{n-1},\frac{c_{n-1,n}P^{n-1}}{w_n})^t,\ \Lambda'=(P,P^2,\cdots,P^{n-1},P^n)^t
\end{align} 
such that 
\begin{align}\label{6.2}
(C-W)\Lambda=(0,\cdots,0,\delta,0)^t,\ (C-W)\Lambda'=(0,\cdots,0,0,\delta')^t.
\end{align}
Here, $P$ is a positive integer, and we have
\begin{align}\label{6.3}
\delta=(c_{n-1,n}-Pw_n)\frac{c_{n-1,n}P^{n-1}}{w_n},\delta'=(c_{n-1,n}-Pw_n)P^{n-1}\in \mathbb{Q}\setminus \{0\}.
\end{align}
\end{enumerate}
\end{lemma}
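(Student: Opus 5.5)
The plan is to build $C$ supported only on the edges of the spanning tree $T$ from Lemma \ref{tree}: set $c_{ij}=0$ whenever $v_i,v_j$ are not joined by an edge of $T$. Then I determine the tree entries row by row, in increasing order of labels, so that the first $n-1$ rows of $(C-W)\Lambda'=\vec 0$ hold with $\Lambda'=(P,P^2,\dots,P^n)^t$.

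By Lemma \ref{tree} (3), every vertex $v_i$ with $i\le n-1$ has a unique parent $v_{p(i)}$ in $T$ (the next vertex on the path to $v_n$), with $p(i)>i$; all its other $T$-neighbours are children with labels $<i$. The $i$-th row of $(C-W)\Lambda'=0$ reads
$$c_{i,p(i)}P^{p(i)}+\sum_{k\ \mathrm{child\ of}\ i}c_{ik}P^{k}-w_iP^{i}=0 .$$
When row $i$ is processed, the children's entries $c_{ik}=c_{ki}$ were fixed at the earlier steps $k<i$. So this equation determines
$$c_{i,p(i)}=w_iP^{i-p(i)}-\sum_k c_{ik}P^{k-p(i)} .$$
By induction every tree entry is a Laurent polynomial in $P$ with only negative powers, so it is $O(1/P)$. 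Since $b_{ij}>0$ on every edge of $\Gamma_M$, choosing $P$ large gives item (1); symmetry holds by construction. For $i=n-1$ we have $p(n-1)=n$, so $c_{n-1,n}$ is determined in the same way, and it is the only nonzero entry in row $n$.

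Next I verify the two identities (6.2). The vectors $\Lambda$ and $\Lambda'$ agree in the first $n-1$ coordinates, and only row $n-1$ involves $\lambda_n$ (through $c_{n-1,n}$). Hence rows $1,\dots,n-2$ vanish for both vectors, and row $n-1$ vanishes for $\Lambda'$. For $\Lambda$, row $n-1$ equals
$$c_{n-1,n}\Big(\tfrac{c_{n-1,n}P^{n-1}}{w_n}-P^n\Big)=\delta .$$
Row $n$ is $c_{n-1,n}P^{n-1}-w_n\lambda_n$, which is $0$ for $\Lambda$ and $\delta'$ for $\Lambda'$. Here $w_n\ne0$ because $M_n=M_v$ has nonzero charge (Lemma \ref{firstcover}). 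Since $c_{n-1,n}=O(1/P)$, we get $c_{n-1,n}\ne Pw_n$ for large $P$. Thus $\delta'\ne0$ automatically, and $\delta\ne0$ and total non-zeroness of $\Lambda$ both reduce to item (2).

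The main obstacle is item (2), $c_{n-1,n}\neq 0$. This quantity is a Laurent polynomial in $1/P$, so it is nonzero for all large $P$ unless it vanishes identically. My first attempt is to examine the lowest-order term coming from the chain of recursions, looking for a vertex with $w_i\ne0$. If that fails, I would exploit the remaining freedom. One option is to put a small nonzero entry $c_{n-1,j}$ on a non-tree edge at $v_{n-1}$, or on a non-tree edge inside $\Gamma'$; such edges exist by the valence $\ge3$ condition of Lemma \ref{simplegraph}. This entry is then compensated in the recursion of the later rows, which shifts $c_{n-1,n}$ by a nonzero Laurent monomial and breaks the identical vanishing. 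Alternatively, I would perturb the exponents, replacing $P^i$ by $P^{a_i}$ with strictly increasing $a_i$ chosen generically. Once $c_{n-1,n}\ne0$ is secured, I fix $P$ large enough that all the strict inequalities in item (1) hold simultaneously, which finishes the proof.
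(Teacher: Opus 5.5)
\textbf{There is a genuine gap at item (2).} Several parts of your proposal are correct:
\begin{itemize}
\item Building $C$ on the spanning tree, solving the rows in increasing order against $\Lambda'$, and the $O(1/P)$ bound for item (1) are the same as in the paper, as is your verification of \eqref{6.2}.
\item Your argument that $\delta'\neq 0$ for large $P$ is valid: $c_{n-1,n}=O(1/P)$ while $|Pw_n|\to\infty$. It is actually more direct than the paper's appeal to non-virtual-fibering.
\item You correctly reduce $\delta\neq0$, and the total non-zeroness of $\Lambda$, to item (2).
\end{itemize}

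Item (2) is the new content of the lemma, and you leave it as a list of options. Your ``first attempt'' is the right one, but it needs two facts you do not supply.
\begin{enumerate}
\item Unwind your recursion along the unique path in $T$ from $v_k$ to $v_n$, which ends with the edge $v_{n-1}v_n$ by Lemma \ref{tree} (2). This shows that each $w_k$ enters $c_{n-1,n}$ exactly once, so that $c_{n-1,n}=\sum_{k=1}^{n-1}\pm w_k P^{-(2n-1-2k)}$. The exponents are pairwise distinct, so different $w_k$ cannot cancel. Hence $c_{n-1,n}$ is a non-zero polynomial in $1/P$ if and only if $w_k\neq0$ for some $k\le n-1$.
\item Such a $k$ must exist. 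This is exactly Lemma \ref{firstcover} (2): there are at least two vertex manifolds with non-zero charge, one of which is $M_n$. You never invoke it, and without it your first attempt really fails. If $w_1=\cdots=w_{n-1}=0$, the recursion makes every tree entry of $C$, in particular $c_{n-1,n}$, identically zero.
\end{enumerate}

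\textbf{Your fallbacks do not repair this.}
\begin{itemize}
\item Perturbing the exponents moves $\Lambda'$ away from the form required in \eqref{6.1}. It also cannot help: with $C$ supported on $T$, the recursion is linear and homogeneous in $(w_1,\dots,w_{n-1})$. So if these all vanish, $C=0$ whatever exponents you use.
\item A non-zero non-tree entry $c_{jn}$ with $j\neq n-1$ adds $c_{jn}(\lambda_n-\lambda'_n)\neq0$ to row $j$ of $(C-W)\Lambda$ and also changes row $n$. Then \eqref{6.2} fails for the prescribed $\Lambda$.
\item Non-tree edges inside $\Gamma'$ need not exist, since all non-tree edges of $\Gamma_M$ may be incident to $v_n$. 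When they do exist, they need not help. Set $f_{ij}=c_{ij}P^{i+j}$, so the first $n-1$ rows become $\sum_k f_{ik}=w_iP^{2i}$. A non-tree entry $c_{ab}$ then changes $P^{2n-1}c_{n-1,n}$ by $-f_{ab}((-1)^{d_a}+(-1)^{d_b})$. Here $d_u$ is the $T$-distance from $v_u$ to $v_{n-1}$.
\item That change is zero whenever the edge closes an even cycle. For bipartite $\Gamma'$, nothing is gained at all.
\end{itemize}

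So the hypothesis from Lemma \ref{firstcover} (2), combined with the exponent computation in step 1, is genuinely needed.
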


Note that $w_n\ne 0$ by Lemma \ref{firstcover} (1) and Lemma \ref{tree} (1), so equations \eqref{6.1} and \eqref{6.3} have no zeros in denominators.

\begin{proof}
The construction of $C$ and $\Lambda'$ is the same as the construction in \cite[Lemma 3.2]{WY}, but we will repeat it since the details will be used to check item (2) and to construct $\Lambda$.

Recall that all entries of $B$ are non-negative, and $b_{ij}>0$ if and only if there is an edge between $v_i$ and $v_j$ in $\Gamma_M$. Since $M_n$ has non-zero charge, we have $w_n=\frac{z_n}{\chi_n^2}\ne 0$. We define the following two positive rational numbers 
\begin{align}\label{6.4}
b=\min\{b_{ij}\ |\ b_{ij}>0\},\ w=\max\{|w_i|\}.
\end{align}
We take a large positive integer 
\begin{align}\label{6.5}
P>\frac{w}{b}+2
\end{align} to be determined later.

For the matrix $C=(c_{ij})_{n\times n}$, we define $c_{ij}= 0$ if there is no edge in the spanning tree $T$ between $v_i$ and $v_j$. Item (1) apparently holds for such a $c_{ij}$. We will define the other $c_{ij}$ inductively by $i$, by using the equation 
\begin{align}\label{6.6}
(C-W)(P,P^2,\cdots,P^n)^t=(0,\cdots,0,0,\delta')^t
\end{align}
deduced from the $\Lambda'$ part of equations \eqref{6.1} and \eqref{6.2}. Actually, we will prove that $|c_{ij}|<b$ always holds.

We start with $i=1$. By Lemma \ref{tree} (3), $v_1$ is a valence-$1$ vertex of $T$, so it is adjacent to a unique vertex $v_j$ in $T$ with $j\geq 2$. So $c_{1k}=0$ for any $k\ne j$ and $b_{1j}>0$ holds. The first coordinate of \eqref{6.6} gives equation $c_{1j}P^j-w_1P=0$, so we have to define
\begin{align}\label{6.7}
c_{1j}=\frac{w_1}{P^{j-1}},
\end{align} 
and $|c_{1j}|\leq \frac{w}{P}<b$ holds.

Suppose we have defined the first $i-1$ rows of $C$ for $i\leq n-1$. Then we want to define the $i$-th row of $C$. Since $C$ is symmetric, the first $i-1$ columns of $C$ are also defined. For the vertex $v_i$ in $\Gamma_M$, by Lemma \ref{tree} (3), there is a unique vertex $v_j$ adjacent to $v_i$ in $T$ such that $j>i$. So for the $i$-th row of $C$, it remains to define $c_{ij}$, and note that $b_{ij}>0$ holds. The $i$-th coordinate of \eqref{6.6} gives
$$c_{i1}P+c_{i2}P^2+\cdots +c_{i,i-1}P^{i-1}+c_{ij}P^j-w_iP^i=0,$$ so we have 
\begin{align}\label{6.8}
c_{ij}=\frac{w_i}{P^{j-i}}-\frac{1}{P^j}(c_{i1}P+c_{i2}P^2+\cdots +c_{i,i-1}P^{i-1}).
\end{align}
Then we estimate 
\begin{align*}
& |c_{ij}|\leq |\frac{w_i}{P^{j-i}}|+\frac{1}{P^j}|c_{i1}P+c_{i2}P^2+\cdots +c_{i,i-1}P^{i-1}|\\
\leq \ & \frac{w}{P}+\frac{b}{P^j}\frac{P^i-P}{P-1}\leq \frac{w+b}{P}< b.
\end{align*}
Here, the second inequality follows from $j>i$ and $P>2$, and the last inequality follows from \eqref{6.5}.

Once we define the first $n-1$ rows of $C$, we get the whole matrix $C$, since $C$ is symmetric and $c_{nn}=0$. Since all $w_i$ are rational numbers, all $c_{ij}$ are also rational numbers by equations \eqref{6.7} and \eqref{6.8}.  Then equation \eqref{6.6} holds since we do not require $\delta'$ to be zero. By Lemma \ref{tree} (2), the only non-zero entry in the $n$-th row of $C$ is $c_{n,n-1}=c_{n-1,n}$, so the last coordinate of equation  \eqref{6.6} gives $\delta'=(c_{n-1,n}-w_nP)P^{n-1}$. Since $M$ is not virtually fibered, Lemma \ref{horizontalbasic} implies that $\delta'\ne 0$.

Up until now, our proof is identical to the proof of \cite[Lemma 3.2]{WY}. In the following, we will prove item (2) and the part of item (3) on $\Lambda$ and $\delta$. 

We first assume that item (2) holds (i.e. $c_{n-1,n}\ne 0$) and prove item (3). 

Since $c_{n-1,n}\ne 0$, the vector $\Lambda$ in \eqref{6.1} is a totally non-zero rational vector. 
Since the only non-zero entry in the last column of $C$ is $c_{n-1,n}$, while $\Lambda$ and $\Lambda'$ agree except on the last coordinate, the first $n-2$ coordinates of $(C-W)\Lambda$ equal the first $n-2$ coordinates of $(C-W)\Lambda'$, which are all zero.
Since the only non-zero entry in the last row of $C$ is $c_{n,n-1}=c_{n-1,n}$, the last coordinate of $(C-W)\Lambda$ is $c_{n-1,n}P^{n-1}-w_n\frac{c_{n-1,n}P^{n-1}}{w_n}=0$. By taking the difference of two equations of \eqref{6.2}, we have
$$\delta=(n-1)\text{-th\ coordinate\ of\ }(C-W)(\Lambda-\Lambda')=c_{n-1,n}(\frac{c_{n-1,n}P^{n-1}}{w_n}-P^n),$$ 
which gives the first equation in \eqref{6.3}.
By Lemma \ref{horizontalbasic} again, we have $\delta\ne 0$ since $M$ is not virtually fibered. This proves the first equation in \eqref{6.1}.

It remains to choose a positive integer $P$ satisfying equation \eqref{6.5} so that item (2) holds (i.e. $c_{n-1,n}\ne 0$). By equations \eqref{6.7} and \eqref{6.8}, each $c_{ij}$ is a polynomial of $\frac{1}{P}$, whose coefficients are linear combinations of $w_k$. We need to figure out this polynomial more explicitly.

For any vertex $v_j$ of $T$, it gives a subtree $T_j\subset T$ as in Lemma \ref{tree} (3), then any vertex $v_k$ of $T_j$ satisfies $k\leq j$. For any vertex $v_k\in T_j\setminus \{v_j\}$, we define a positive integer $d_{kj}$ as follows. If $v_k$ is adjacent to $v_j$, we define 
\begin{align}\label{6.9}
d_{kj}=j-k.
\end{align}
If $v_k$ is not adjacent to $v_j$, then there exists a unique vertex $v_{j'}$ in $T_j$ adjacent to $v_j$, such that $v_k\in T_{j'}$. Then we have $k<j'<j$ and we define 
\begin{align}\label{6.10}
d_{kj}=j+j'-2k.
\end{align}

{\bf Claim.} For any edge in $T$ between $v_i$ and $v_j$ with $j>i$, we have
\begin{align}\label{6.11}
c_{ij}=\sum_{v_k\in T_i}\epsilon_{kj}\frac{w_k}{P^{d_{kj}}},
\end{align}
with $\epsilon_{kj}\in \{\pm1\}$. A picture of $v_j,v_i,v_k$ and $T_i$ can be found in Figure \ref{Figure2}. We will prove this claim by induction on $i$. Note that for any $i\leq n-1$, there is a unique $j>i$ such that $T$ has an edge between $v_i$ and $v_j$.

For $c_{1j}$ as in \eqref{6.7}, we have $i=1$ and $T_{1}$ has a unique vertex $v_1$. So the only choice of $k$ in equation \eqref{6.11} is $k=1$, and we have $d_{1j}=j-1$. Then equation \eqref{6.7} implies that equation \eqref{6.11} holds for $c_{1j}$.

Now we assume that equation \eqref{6.11} holds for all $c_{kl}$ with $k<l$ and $k\leq i-1$. Then we consider the formula of $c_{ij}$ in \eqref{6.8}, and note that $T_i\subset T_j$ holds. Let $v_{i_1},\cdots,v_{i_m}$ be the vertices in $T$ adjacent to $v_i$ other than $v_j$. Then we have $i_1,\cdots,i_m<i$, and the only non-zero $c_{i\bullet}$'s on the right side of \eqref{6.8} are $c_{i,i_1},\cdots,c_{i,i_m}$. See Figure \ref{Figure2} for a picture of $v_j,v_i$ and $v_{i_1},\cdots,v_{i_m}$.

Note that the set of vertices of $T_i$ is the disjoint union of sets of vertices of $T_{i_1},\cdots,T_{i_m}$ and $\{v_i\}$. 
By the induction hypothesis \eqref{6.11}, $c_{i,i_t}=c_{i_t,i}$ only consists of terms contributed by vertices of $T_{i_t}$, for any $t=1,\cdots,m$. Since the trees $T_{i_1},\cdots,T_{i_m}$ are disjoint, for any $v_k\in T_i$ with $w_k\ne 0$, $w_k$ shows up exactly once on the right side of \eqref{6.8}. So we only need to figure out the power of $p$ in $\frac{w_k}{P^{\bullet}}$ on the right hand side of equation \eqref{6.8}. 

If $k=i$, then $v_k=v_i$ is adjacent to $v_j$ and $d_{kj}=d_{ij}=j-i$. The term $\frac{w_i}{P^{d_{ij}}}=\frac{w_i}{P^{j-i}}$ in \eqref{6.11} is exactly the first term of \eqref{6.8}. If $k\ne i$, then $v_k$ lies in a unique $T_{i_t}$. By the induction hypothesis, in $c_{ii_t}=c_{i_ti}$, the term corresponding to $v_k$ is $\frac{w_k}{P^{d_{ki}}}$, up to a sign. So the term in \eqref{6.8} corresponding to $v_k$ is $\frac{w_k}{P^{d_{ki}+(j-i_t)}},$ up to a sign. Comparing to equation \eqref{6.11}, it remains to check 
\begin{align}\label{6.12}
d_{kj}=d_{ki}+(j-i_t).
\end{align}
If $k=i_t$, then $d_{i_tj}=j+i-2i_t$ and $d_{i_ti}=i-i_t$, so \eqref{6.12} holds. If $k\ne i_t$, then $d_{kj}=j+i-2k$ and $d_{ki}=i+i_t-2k$, so \eqref{6.12} still holds. This finishes the proof of the Claim, and equation \eqref{6.11} holds. We can also figure out $\epsilon_{kj}$ in \eqref{6.11} if we work harder, but it is not helpful for our proof.

Now we return to the proof of item (2), and we want to choose a positive integer $P$ satisfying equation \eqref{6.5} so that $c_{n-1,n}\ne 0$. By equation \eqref{6.11}, we have 
\begin{align}\label{6.13}
c_{n-1,n}=\sum_{v_k\in T_{n-1}}\epsilon_{kn}\frac{w_k}{P^{d_{kn}}}=\sum_{k=1}^{n-1}\epsilon_{kn}\frac{w_k}{P^{d_{kn}}}.
\end{align}
Here, the second equation holds by Lemma \ref{tree} (2).

By Lemma \ref{firstcover} (2), there exists $i_0\in \{1,\cdots,n-1\}$ such that $w_{i_0}\ne 0$, and we can assume that  $i_0$ is the largest such integer. If $i_0=n-1$, then equation \eqref{6.9} gives $d_{n-1,n}=1$, and equation \eqref{6.10} gives $d_{kn}> n-k\geq 2$ for any $k\leq n-2$. So the only $\frac{1}{P}$ term in $c_{n-1,n}$ is $\pm\frac{w_{n-1}}{P}$. Thus $c_{n-1,n}$ is a non-trivial polynomial of $\frac{1}{P}$, and we can choose an integer $P$ satisfying equation \eqref{6.5} so that $c_{n-1,n}\ne 0$.

If $i_0\leq n-2$, then equation \eqref{6.10} gives $d_{i_0n}=2n-1-2i_0$. By our choice of $i_0$, any other $k\in \{1,\cdots,n-1\}$ with $w_k\ne 0$ satisfies $k< i_0$. So equation \eqref{6.10} gives $d_{kn}=2n-1-2k>2n-1-2i_0=d_{i_0n}$. So the only $\frac{1}{P^{d_{i_0n}}}$ term in $c_{n-1,n}$ is $\pm\frac{w_{i_0}}{P^{d_{i_0n}}}$. Again, we can choose an integer $P$ satisfying equation \eqref{6.5} so that $c_{n-1,n}\ne 0$.

This finishes the proof of this lemma.
\end{proof}

Now we are ready to prove Proposition \ref{technicalresult}.

\begin{proof}
We start with a closed framed graph $3$-manifold $M$ that is not virtually fibered. We apply Lemma \ref{firstcover} to construct a framed finite cover of $M$ and still denote the resulting manifold by $M$. Then Lemma \ref{tree} gives us a maximal spanning tree $T\subset \Gamma_M$ and a labeling of vertices of $M$ by $\{1,\cdots,n\}$, such that $M_{n-1}$ is adjacent to $M_n$ and $M_n$ has non-zero charge.

Let $B=(b_{ij})_{n\times n}$ and $W=\text{diag}(w_1,\cdots,w_n)$ be the $n\times n$ matrices associated to $M$. Then Lemma \ref{twosolutions} gives us an $n\times n$ matrix $C$ and two totally non-zero rational vectors $$\Lambda=(P,P^2,\cdots,P^{n-1},\frac{cP^{n-1}}{w})^t,\ \Lambda'=(P,P^2,\cdots,P^{n-1},P^n)^t$$ as in equation \eqref{6.1}, such that equations \eqref{6.2} and \eqref{6.3} hold. Here we use $c$ and $w$ to denote the non-zero rational numbers $c_{n-1,n}$ and $w$ in Lemma \ref{twosolutions}, respectively.

Then Lemmas \ref{cover} and \ref{existsolution} give us a frame preserving degree-$d$ cover $\phi:\tilde{M}\to M$ and an order-$2$ deck transform $\tau:\tilde{M}\to \tilde{M}$, where $d$ is a large even number to be determined later. By Lemma \ref{cover} (2), for each vertex manifold $M_i\subset M$, $\phi^{-1}(M_i)$ consists of two connected components $\tilde{M}_{2i-1}, \tilde{M}_{2i}$ swapped by $\tau$. Since $M_{n-1}$ and $M_n$ are adjacent in $M$, Lemma \ref{cover} (3) provides us a JSJ torus $T_{|e_*|}\subset \tilde{M}$ adjacent to $\tilde{M}_{2n-3}$ and $\tilde{M}_{2n-1}$, and we orient $e_*$ as $e_*:\tilde{v}_{2n-3}\to \tilde{v}_{2n-1}$. Here $\tilde{v}_i$ is the vertex in $\Gamma_{\tilde{M}}$ corresponding to $\tilde{M}_i$. Then $T_{|e_{**}|}=\tau(T_{|e_*|})$ is adjacent to $\tilde{M}_{2n-2}$ and $\tilde{M}_{2n}$, and we orient it as $e_{**}:\tilde{v}_{2n-2}\to \tilde{v}_{2n}$. Since $M_n$ has non-zero charge in $M$, Lemma \ref{cover} (4) implies that $\tilde{M}_{2n-1}$ and $\tilde{M}_{2n}$ have non-zero charges in $\tilde{M}$. Lemma \ref{cover} (3) also gives other JSJ tori in $\tilde{M}_{2n-3}\cap \tilde{M}_{2n-1}$ and $\tilde{M}_{2n-2}\cap \tilde{M}_{2n}$, so $N=\tilde{M}\setminus \setminus (T_{|e_*|}\cup T_{|e_{**}|})$ is connected. We finish the construction of the desired objects in items (1) and (2) of this proposition.

Now we check that the objects in items (1) and (2) satisfy the assumptions of Lemmas \ref{horizontaladvanced} and \ref{horizontaladvanced2}, so that we can apply Lemma  \ref{horizontaladvanced2} to construct horizontal subsurfaces in $N=\tilde{M}\setminus\setminus (T_{|e_*|}\cup T_{|e_{**}|})$. By Lemma \ref{cover} (1), $T_{|e_*|}$ and $T_{|e_{**}|}$ cover the same JSJ torus in $M$ by homeomorphism. Since $\tilde{M}\to M$ is frame preserving,  $f_{e_*}=f_{e_{**}}=
\begin{pmatrix}
p & r \\
q & s
\end{pmatrix}$ is independent of the cover $\tilde{M}$ . By \eqref{5.12}, we have 
\begin{align}\label{6.14}
\tilde{\chi}_{2n-3}=\tilde{\chi}_{2n-2}=\frac{d\chi_{n-1}}{2}=\frac{d\chi}{2} \text{\ and\ } \tilde{\chi}_{2n-1}=\tilde{\chi}_{2n}=\frac{d\chi_n}{2}=\frac{d\chi'}{2}.
\end{align}
Here we use $\chi$ and $\chi'$ to denote $\chi_{n-1}$ and $\chi_n$, respectively.

As in Lemma \ref{cover} (4), let $\tilde{B}$ and $\tilde{W}$ be the $2n\times 2n$ matrices associated to $\tilde{M}$. Then, for the following totally non-zero rational vectors 
$$
\tilde{\Lambda}=(P,-P,P^2,-P^2,\cdots,P^{n-1},-P^{n-1},\frac{cP^{n-1}}{w},-\frac{cP^{n-1}}{w})^t,
$$
$$
\tilde{\Lambda}'=(P,-P,P^2,-P^2,\cdots,P^{n-1},-P^{n-1},P^n,-P^n)^t,
$$
Lemma \ref{twosolutions} (3) and Lemma \ref{existsolution} imply that 
$$d(\tilde{B}-\tilde{W})\tilde{\Lambda}=(0,0,\cdots,0,0,2\delta,-2\delta,0,0)^t,$$
$$d(\tilde{B}-\tilde{W})\tilde{\Lambda}'=(0,0,\cdots,0,0,0,0,2\delta',-2\delta')^t,$$
for $\delta,\delta'\in \mathbb{Q}\setminus \{0\}$ as in equation \eqref{6.3}.

For any $\alpha,\beta\in \mathbb{Q}$, we have
\begin{align*}
&\alpha\tilde{\Lambda}+\beta\tilde{\Lambda}'=\\
&((\alpha+\beta)P,-(\alpha+\beta)P,\cdots, (\alpha+\beta)P^{n-1},-(\alpha+\beta)P^{n-1},(\alpha c+\beta Pw)\frac{P^{n-1}}{w},-(\alpha c+\beta Pw)\frac{P^{n-1}}{w})^t
\end{align*}
and
\begin{align}\label{6.15}
d(\tilde{B}-\tilde{W})(\alpha\tilde{\Lambda}+\beta\tilde{\Lambda}')=(0,0,\cdots,0,0,2\alpha\delta,-2\alpha\delta,2\beta\delta',-2\beta\delta')^t.
\end{align}
Here $\alpha\tilde{\Lambda}+\beta\tilde{\Lambda}'$ is a totally non-zero rational vector if and only if
\begin{align}\label{6.16}
\alpha+\beta\ne 0 \text{\ and\ } \alpha c+\beta Pw\ne 0.
\end{align}

For any $\alpha,\beta\in \mathbb{Q}$ satisfying \eqref{6.16}, we apply Lemma \ref{horizontaladvanced2} to $\alpha\tilde{\Lambda}+\beta\tilde{\Lambda}'$ to construct a properly embedded, horizontal subsurface $\Sigma_{\alpha,\beta}\to N=\tilde{M}\setminus\setminus (T_{|e_*|}\cup T_{|e_{**}|})$ such that the following hold:
\begin{enumerate}
\item[(a)] $\Sigma_{\alpha,\beta}$ intersects with each vertex manifold of $N$ at a connected, orientable, horizontal subsurface (Lemma \ref{horizontaladvanced} (3)).
\item[(b)] $\Sigma_{\alpha,\beta}\cap T_{e_*}$ and $\Sigma_{\alpha,\beta}\cap T_{e_{**}}$ are multiples of the following rational slope, as unoriented multi-slopes:
\begin{align}\label{6.17}
(\frac{2|r|(\alpha+\beta)P^{n-1}}{d\chi},
\frac{2(\alpha c+\beta Pw)P^{n-1}}{d\chi'w}-
\epsilon_r p\frac{2(\alpha+\beta)P^{n-1}}{d\chi}
-\alpha\chi\delta|r|)
\end{align}
\item[(c)] $\Sigma_{\alpha,\beta}\cap T_{\bar{e}_*}$ and $\Sigma_{\alpha,\beta}\cap T_{\bar{e}_{**}}$ are multiples of the following rational slope, as unoriented multi-slopes:
\begin{align}\label{6.18}
(\frac{2|r|(\alpha c+\beta Pw)P^{n-1}}{d\chi'w},
\frac{2(\alpha+\beta)P^{n-1}}{d\chi}+
\epsilon_r s\frac{2(\alpha c+\beta Pw)P^{n-1}}{d\chi'w}
-\beta\chi'\delta'|r|),
\end{align}
\item[(d)] all four multiplicative constants in items (b) and (c) are the same. 
\end{enumerate}
Moreover, since $N$ is connected, $\Sigma_{\alpha,\beta}$ is connected.

Now we have two homomorphisms $\eta:\mathbb{Q}^2\to H_1(T_{e_*};\mathbb{Q})$ and $\eta':\mathbb{Q}^2\to H_1(T_{\bar{e}_*};\mathbb{Q})$, where $\eta(\alpha,\beta)$ is define by equation \eqref{6.17} for any $(\alpha,\beta)\in \mathbb{Q}^2$, and $\eta'(\alpha,\beta)$ is defined by \eqref{6.18}. Here we equip $H_1(T_{e_*};\mathbb{Q})$ and $H_1(T_{\bar{e}_*};\mathbb{Q})$ with ordered basis given by the frame of $\tilde{M}$.

We rewrite \eqref{6.17} as 
$$
\begin{pmatrix}
\frac{2|r|P^{n-1}}{d\chi} & \frac{2|r|P^{n-1}}{d\chi}\\
\frac{2cP^{n-1}}{d\chi'w}-\frac{2\epsilon_r pP^{n-1}}{d\chi}-\chi\delta|r| & \frac{2P^n}{d\chi'}-\frac{2\epsilon_rpP^ {n-1}}{d\chi}
\end{pmatrix}
\begin{pmatrix}
\alpha \\
\beta
\end{pmatrix},
$$
and the $2\times 2$ matrix has determinant 
\begin{align}\label{6.19}
\frac{2|r|P^{n-1}}{d\chi}(\frac{2P^n}{d\chi'}-\frac{2cP^{n-1}}{d\chi'w}+\chi\delta|r|)=\frac{4|r|(wP-c)P^{2n-2}}{\chi\chi'w}\cdot \frac{1}{d^2}+2|r|^2\delta P^{n-1}\cdot \frac{1}{d}
\end{align}
Note that $2|r|^2\delta P^{n-1}\ne 0$ holds. Here, $r\ne 0$ since fibers in adjacent vertex manifolds do not match (equation \eqref{2.1}), $\delta\ne 0$ since $M$ is not virtually fibered (equation \eqref{6.3}), $P\ne 0$ since it is a positive integer (Lemma \ref{twosolutions}). So we can choose the even integer $d$ so that \eqref{6.19} is not zero, thus $\eta:\mathbb{Q}^2\to H_1(T_{e_*};\mathbb{Q})$ is an isomorphism.

Similarly, we can rewrite \eqref{6.18} as the matrix-vector form. Then a direct but more tedious computation gives the determinant of the $2\times 2$ matrix:
\begin{align}\label{6.20}
-\frac{4|r|(wP-c)P^{2n-2}}{\chi\chi'w}\cdot \frac{1}{d^2}-\frac{2|r|^2c \delta' P^{n-1}}{w}\cdot \frac{1}{d}
\end{align}
By equation \eqref{6.3}, we have $\delta=\frac{c}{w}\delta'$. So equations \eqref{6.19} and \eqref{6.20} are negative of each other, and $\eta':\mathbb{Q}^2\to H_1(T_{\bar{e}_*};\mathbb{Q})$ is also an isomorphism.

Then we define the isomorphism $\iota:H_1(T_{e_*};\mathbb{Q})\to H_1(T_{\bar{e}_*};\mathbb{Q})$ in item (3) by $\iota=\eta'\circ \eta^{-1}$. Equations \eqref{6.19} and \eqref{6.20} imply that $\iota$ has determinant $-1$ under the coordinates given by the frame of $\tilde{M}$. Since $f_{\bar{e}_*}$ equals
$(f_{e_*})^{-1}=\begin{pmatrix}
p & r \\
q & s
\end{pmatrix}^{-1}$ with determinant $-1$, $(f_{\bar{e}_*})_{\#}\circ \iota: H_1(T_{e_*};\mathbb{Q})\to H_1(T_{e_*};\mathbb{Q})$ has determinant $1$, thus item (v) holds.

The two exceptional rational slopes $c_1^*,c_2^*$ on $T_{e_*}$ are given by $\eta (\alpha,\beta)\in H_1(T_{e_*};\mathbb{Q})$ for some $(\alpha,\beta)\in \mathbb{Q}^2\setminus \{(0,0)\}$ such that $\alpha+\beta=0$ or $\alpha c+\beta Pw=0$, respectively (equation \eqref{6.16}). For any rational slope $c$ on $T_{e_*}$ that is not a multiple of  $c_1^*$ or $c_2^*$, there exists $(\alpha,\beta)\in \mathbb{Q}^2$ satisfying equation \eqref{6.16}, such that $\eta(\alpha,\beta)$ represents $c$ on $T_{e_*}$. Then we can apply the above construction to 
$\alpha\tilde{\Lambda}+\beta\tilde{\Lambda}'$, to construct the desired surface $\Sigma_c=\Sigma_{\alpha,\beta}$. The desired conditions (i)-(iv) on $\Sigma_c$ follow from the above conditions (a)-(d) on $\Sigma_{\alpha,\beta}$ and the fact that $\tau:\tilde{M}\to \tilde{M}$ preserves the frame of $\tilde{M}$.

This finishes the proof of Proposition \ref{technicalresult}.
\end{proof}

\section{Graph $3$-manifold groups are Lex}\label{lexproof}

Now we can forget all the linear algebra technicalities in Sections \ref{moregraph} and \ref{technicalsection}, and we will only use the statement of Proposition \ref{technicalresult}. The goal of this section is to prove Theorem \ref{lex}, which is a consequence of the following Theorem \ref{lexsubgroups}. The proof of Theorem \ref{lexsubgroups} works for all chargeless closed graph $3$-manifolds.

\begin{theorem}\label{lexsubgroups}
Let $M$ be a closed graph $3$-manifold that is not virtually fibered, then $G=\pi_1(M)$ contains a sequence of subgroups 
$$G>G_1\vartriangleright G_2\vartriangleright  G_3\vartriangleright  G_4,$$ such that the following hold.
\begin{enumerate}
\item $G_1$ is a finite-index subgroup of $G$.
\item $G_1/G_2\cong \mathbb{Z}$.
\item The index of $G_3$ in $G_2$ is $1$ or $2$.
\item $G_3/G_4$ is isomorphic to a subgroup of $\mathbb{Q}$.
\item $G_4$ is isomorphic to a free group (possibly infinitely generated).
\end{enumerate}
\end{theorem}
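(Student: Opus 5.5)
The plan is to build the four subgroups from the cover $\tilde{M}$, the tori $T_{|e_*|},T_{|e_{**}|}$, the involution $\tau$, and the isomorphism $\iota$ given by Proposition \ref{technicalresult}. The structure mirrors the proof of Theorem \ref{polyfreesubgroups}. The free-group quotient $\pi_1(\Gamma)$ there is replaced by an infinite cyclic cover transverse to two JSJ tori. The surface $\Sigma$ there is replaced by a ``rational horizontal class'' assembled from the surfaces $\Sigma_c$.

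\textbf{Steps 1 and 2: $G_1$, $G_2$, and a possible double cover.} Let $G_1=\pi_1(\tilde{M})$, with $\tilde{M}$ the cover from Proposition \ref{technicalresult}; it has finite index in $G$. Orient $T_{|e_*|}$ and $T_{|e_{**}|}=\tau(T_{|e_*|})$ compatibly, so that both point from the $\tilde{M}_{2n-3},\tilde{M}_{2n-2}$ side to the $\tilde{M}_{2n-1},\tilde{M}_{2n}$ side. Algebraic intersection with $T_{|e_*|}\cup T_{|e_{**}|}$ then defines a homomorphism $G_1\to\mathbb{Z}$. It is surjective because $N=\tilde{M}\setminus\setminus(T_{|e_*|}\cup T_{|e_{**}|})$ is connected, so a loop crosses $T_{|e_*|}$ exactly once. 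Let $G_2$ be the kernel. The corresponding cover $M_2$ is a union of copies $N_k$, $k\in\mathbb{Z}$, in which $T_{e_*}\subset N_k$ is glued to $T_{\bar e_*}\subset N_{k+1}$ by $f_{e_*}$, and likewise for $e_{**}$. Since the surfaces $\Sigma_c$ may be non-orientable, and we want coherent transverse orientations, we may need to pass to a double cover $M_3\to M_2$ (orientation/sidedness cover). This gives $[G_2:G_3]\in\{1,2\}$. I expect item (v) of Proposition \ref{technicalresult}, that $A:=(f_{\bar e_*})_\#\circ\iota$ has determinant $1$, to be exactly what makes the orientations of consecutive pieces consistent, so that at most index $2$ is needed.

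\textbf{Step 3: choosing slopes, and the main obstacle.} For each $k$ choose a rational slope $c_k$ on $T_{e_*}\subset N_k$. The gluing requires $\iota(c_{k+1})$ on $T_{\bar e_*}\subset N_{k+1}$ to match $f_{e_*}(c_k)$. This forces $c_{k+1}\propto A^{-1}c_k$, hence $c_k\propto A^{-k}c_0$ for all $k\in\mathbb{Z}$; by $\tau$-equivariance the same choice works on $T_{e_{**}}$. Each $c_k$ must avoid the exceptional slopes $c_1^*,c_2^*$. So we need $c_0\notin E:=\bigcup_{k\in\mathbb{Z}}A^k\{c_1^*,c_2^*\}$ in $\mathbb{P}^1(\mathbb{Q})$, and this is the step I expect to be the main obstacle, since $E$ is countable just like $\mathbb{P}^1(\mathbb{Q})$. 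My first attempt is a counting argument with heights. If $A$ has finite order in $PSL_2(\mathbb{Q})$, then $E$ is finite. Otherwise $A$ is parabolic or hyperbolic, and the heights along an orbit grow at least linearly in $|k|$. Then $E$ has $O(H)$ points of height at most $H$, while $\mathbb{P}^1(\mathbb{Q})$ has on the order of $H^2$ such points, so a good $c_0$ exists.

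\textbf{Step 4: $G_4$ as the kernel of a rational class.} Take $\Sigma_{c_k}\subset N_k$ and rescale it by the rational factor needed to match its neighbours; by items (ii)--(iv) all four boundary multiples agree, so a single factor per piece suffices. The result is, on each vertex manifold $M_v$ of $M_3$, a nonzero rational horizontal class $\phi_v\in H^1(M_v;\mathbb{Q})$, namely the Poincar\'e dual of the horizontal surface. These classes agree on every JSJ torus, since the slopes match as unoriented multi-slopes and orientations are coherent in $M_3$. Viewing $G_3$ as a graph of groups with vertex groups $\pi_1(M_v)$ and $\mathbb{Z}^2$ edge groups, the $\phi_v$ assemble to a homomorphism $\phi:G_3\to\mathbb{Q}$, with arbitrary values (say $0$) on stable letters. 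Its image is a subgroup of $\mathbb{Q}$; the rescalings may accumulate denominators, which is why we get $\mathbb{Q}$ and not $\mathbb{Z}$. Let $G_4=\ker\phi$. Since each $\Sigma_{c_k}\cap M_v$ is connected, $\phi_v$ restricted to $\pi_1(M_v)$ is a nonzero class whose kernel is the fiber surface group, so the preimage of $M_v$ in the cover $M_4$ is a disjoint union of copies of $\Sigma_v\times\mathbb{R}$. These pieces are glued along cylinders (boundary circle)$\times\mathbb{R}$, and each boundary circle is matched with exactly one circle of a neighbouring piece. Hence, exactly as in Step III of Theorem \ref{polyfreesubgroups}, $M_4$ is homotopy equivalent to a surface built from the $\Sigma_v$. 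That surface is non-compact, so $G_4$ is free, completing the proof.
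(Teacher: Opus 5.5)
Your outline matches the paper's proof step for step: the finite cover from Proposition \ref{technicalresult}, the infinite cyclic cover cut along $T_{|e_*|}\cup T_{|e_{**}|}$, a double cover to orient the surfaces, and the kernel of a $\mathbb{Q}$-valued class built from rescaled fiber classes. Your Step 4 is fine.

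The genuine gap is in Step 3. Your dichotomy ``$A$ has finite order in $\mathrm{PSL}_2(\mathbb{Q})$, or else it is parabolic or hyperbolic'' is false. $\mathrm{PSL}_2(\mathbb{Q})$ contains elliptic elements of infinite order, e.g.\ $A=\frac{1}{5}\begin{pmatrix}3&-4\\4&3\end{pmatrix}$. It has trace $6/5$, and $A^k\neq\pm I$ for $k\neq 0$ because $(3+4i)^k\notin\mathbb{R}$. For such $A$ the powers stay in a compact subgroup of $\mathrm{PSL}_2(\mathbb{R})$ and every orbit is dense in $\mathbb{R}\cup\{\infty\}$. Nothing at the real place makes heights grow, so your counting argument has no input.

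This is exactly the case the paper treats separately, in Case II of Lemma \ref{ai}. There $\hat\theta$ is conjugated via $s\mapsto\frac{s-\alpha}{s-\beta}$ to multiplication by some $\xi$ on the norm-one group $K^1$ of $K=\mathbb{Q}(\sqrt{\mathrm{tr}(\theta)^2-4})$. The paper then uses that $K^1\supset K^\times/\mathbb{Q}^\times$ is not virtually cyclic, so finitely many $\langle\xi\rangle$-cosets cannot exhaust it.

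Your height approach can be rescued, but only with a non-archimedean ingredient you did not supply:
\begin{itemize}
\item If $A$ is elliptic of infinite order, the denominators of $A^k$ are unbounded; otherwise $\{DA^k\}$ would be a bounded, hence finite, set of integer matrices.
\item So $A$ acts hyperbolically on the Bruhat--Tits tree at some prime $p$.
\item No point of $\mathbb{P}^1(\mathbb{Q})$ is an eigenline of $A$, so the $p$-adic factor of $H(A^kP)$ grows exponentially in $|k|$.
\item Every other local factor either also grows, or (where $\langle A\rangle$ is bounded) is bounded below via $\|A^kv\|_\ell\ge\|A^{-k}\|_\ell^{-1}\|v\|_\ell$.
\end{itemize}
That same local estimate is also what you need to justify your growth claim in the hyperbolic case. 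A priori the finite places could cancel the archimedean growth.

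A smaller point concerns the double cover $M_3\to M_2$. It depends on the surfaces $\Sigma_{c_k}$, so it must come after Step 3, and it has nothing to do with item (v). In the paper, item (v) is used only to put $\theta=(f_{\bar e_*})_\#\circ\iota$ in $\mathrm{PSL}_2(\mathbb{Q})$ for Lemma \ref{ai}.

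The cover of index at most $2$ is built independently of determinants. The paper takes the double cover of each $N_i$ given by mod-$2$ intersection with $\Sigma_i\cup\Sigma_i'$, and glues these by suitable bijections $\rho_{\bullet,i}$. Equivalently, in your language: choose orientations of the connected pieces $\Sigma_{c_k}\cap M_v$, and let a $\mathbb{Z}/2$-cocycle on $\Gamma_{M_2}$ record whether they agree across each JSJ torus. Pull this back along $\pi_1(M_2)\to\pi_1(\Gamma_{M_2})\to\mathbb{Z}/2$ to get the cover. With that made explicit and the elliptic case handled, your argument goes through.
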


We first prove a lemma. For any $2\times 2$ matrix $\theta=
\begin{pmatrix}
a & b\\
c &d 
\end{pmatrix}$ with rational entries, let $\hat{\theta}: \mathbb{Q}\cup \{\infty\}\to \mathbb{Q}\cup \{\infty\}$ be the bijection given by the fractional linear transformation corresponding to $\theta$, i.e. $\hat{\theta}(s)=\frac{as+b}{cs+d}$ for any $s\in \mathbb{Q}\cup \{\infty\}$.

\begin{lemma}\label{ai}
Let $\theta:\mathbb{Q}^2\to \mathbb{Q}^2$ be a linear isomorphism with determinant $1$, and let $\hat{\theta}:\mathbb{Q}\cup \{\infty\}\to \mathbb{Q}\cup \{\infty\}$ be the induced bijection. Then for any finite subset $S\subset \mathbb{Q}\cup \{\infty\}$, there exists $s_0\in \mathbb{Q}\cup \{\infty\}$, such that $\hat{\theta}^n(s_0)\notin S$ for all $n\in \mathbb{Z}$.
\end{lemma}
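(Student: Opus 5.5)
Since $\theta$ has determinant $1$, it is an element of $SL_2(\mathbb{Q})$. The plan is to split into cases according to whether $\theta$ has finite order in $PSL_2(\mathbb{Q})$.

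\emph{Case 1: $\hat{\theta}$ has finite order.} Then every orbit $\{\hat{\theta}^n(s)\}_{n\in\mathbb{Z}}$ is finite. The union of the orbits of the points of $S$ is therefore a finite set, say $S'$. Since $\mathbb{Q}\cup\{\infty\}$ is infinite, we may take any $s_0\notin S'$. Its orbit cannot meet $S$: if $\hat{\theta}^n(s_0)=s\in S$, then $s_0=\hat{\theta}^{-n}(s)\in S'$, a contradiction. In fact this argument works whenever $\hat{\theta}$ has finite order, regardless of the determinant.

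\emph{Case 2: $\hat{\theta}$ has infinite order.} Here I would control the orbits of points of $S$ in both directions. The reduction is the same as in Case 1: it suffices to find $s_0$ outside the set $\bigcup_{s\in S}\{\hat{\theta}^n(s)\}_{n\in\mathbb{Z}}$. This set is countable, so a direct density or counting argument is unavailable, and we need structure. Since $\det\theta=1$, the trace $t$ is rational and the characteristic polynomial is $x^2-tx+1$. There are three subcases.

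\begin{itemize}
\item[(a)] \emph{Parabolic case, $t=\pm 2$.} Up to sign, $\theta$ is conjugate over $\mathbb{Q}$ to $\begin{pmatrix}1&b\\0&1\end{pmatrix}$ with $b\ne0$, so $\hat{\theta}$ is conjugate to $s\mapsto s+b$. Orbits are then cosets $s+b\mathbb{Z}$ together with the fixed point $\infty$. Finitely many cosets of $b\mathbb{Z}$ do not cover $\mathbb{Q}$, so a suitable $s_0$ exists.
\item[(b)] \emph{Hyperbolic case over $\mathbb{Q}$, $t^2-4$ a nonzero rational square.} Then $\theta$ is diagonalizable over $\mathbb{Q}$ as $\mathrm{diag}(\mu,\mu^{-1})$ with $\mu\ne\pm1$ rational. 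After conjugation $\hat{\theta}(s)=\mu^2 s$, and each orbit lies in $\{0,\infty\}\cup s\mu^{2\mathbb{Z}}$. A finite union of such geometric progressions misses most rationals. For example, pick a prime $\ell$ not dividing the numerator or denominator of $\mu$ or of any nonzero element of $S$; then $s_0=\ell$ is not of the form $s\mu^{2n}$, since the $\ell$-adic valuations differ.
\item[(c)] \emph{Elliptic, or irrational hyperbolic.} If $t^2-4$ is not a rational square, $\hat{\theta}$ has no rational fixed points. Finite order would force $|t|<2$ with $t\in\{0,\pm1\}$, which is handled by Case 1, so we may assume $|t|>2$ and $\theta$ is hyperbolic with irrational eigenvalues. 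A valuation argument then works again. The entries of $\theta^n$ are computed from $\theta$ via $\theta^n=U_n\theta-U_{n-1}I$, where $U_n$ are the Chebyshev-type sequences in $t$. If $t\notin\mathbb{Z}$, choose a prime $\ell$ in the denominator of $t$: the $\ell$-adic valuations of the entries of $\theta^n$ go to $-\infty$ linearly in $|n|$, which forces $\hat{\theta}^n(s)$ toward a fixed $\ell$-adic limit, so we choose $s_0$ away from it $\ell$-adically. If $t\in\mathbb{Z}$, then $\theta$ is conjugate in $GL_2(\mathbb{Q})$ to an integral matrix, so it lies in some $SL_2(\mathbb{Z}[1/m])$. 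In that case I would use a height argument instead: $\hat{\theta}^n(s)$ has height growing like $\lambda^{|n|}$, while heights lying in a fixed bounded range leave room for only finitely many orbit points. Hence only finitely many orbit points have height at most $H$, and since there are about $H^2$ rationals of height at most $H$, some of them avoid all the orbits.
\end{itemize}

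The main obstacle is subcase (c). In hindsight the uniform tool is the height argument: for any infinite-order $\hat{\theta}$, each orbit contains only $O(\log H)$ points of height at most $H$, because heights grow exponentially along hyperbolic orbits and linearly along parabolic ones. The rationals of height at most $H$ number about $H^2$, so a finite union of orbits cannot cover them. I would present the proof that way: Case 1 for finite order, and height-growth counting for the rest, with the growth estimate as the step requiring care.
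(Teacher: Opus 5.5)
There is a genuine gap in subcase (c). Subcases (a) and (b) are fine. But from ``finite order would force $t\in\{0,\pm1\}$'' you conclude ``so we may assume $|t|>2$'', and this does not follow. Infinite order only rules out $t\in\{0,\pm1\}$ (and $\theta=\pm I$). It leaves every rational trace with $|t|<2$ and $t\notin\{0,\pm1\}$.

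For instance, $\theta=\begin{pmatrix}0&-1\\1&1/2\end{pmatrix}$ has determinant $1$ and trace $1/2$. Its eigenvalues are $e^{\pm i\varphi}$ with $\cos\varphi=1/4$, and these are not roots of unity, since a root of unity of degree at most $2$ has $2\cos\varphi\in\{0,\pm1,\pm2\}$. So $\hat\theta$ has infinite order and acts on $\mathbb{R}\cup\{\infty\}$ as a conjugate of an irrational rotation. Every orbit is dense, no point of $\mathbb{Q}\cup\{\infty\}$ is fixed, and none of your mechanisms applies: not cosets of $b\mathbb{Z}$, not geometric progressions, not archimedean hyperbolic dynamics.

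This infinite-order elliptic case is exactly the hard case of the lemma. The paper's Case I handles all of $|\mathrm{tr}(\theta)|\ge 2$ at once by the attracting/repelling-point argument on $\partial\mathbb{H}^2$, which subsumes your (a), (b) and the integral-trace part of (c). Case II then treats the elliptic infinite-order case. It identifies $\mathbb{Q}\cup\{\infty\}$ with the norm-one group $K^1$ of $K=\mathbb{Q}(\sqrt{t^2-4})$ via $s\mapsto (s-\alpha)/(s-\beta)$, so that $\hat\theta$ becomes multiplication by some $\xi\in K^1$, and then shows $K^1$ is not virtually cyclic.

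Your closing ``uniform'' height argument has the same hole. You justify height growth only along hyperbolic and parabolic orbits. An elliptic $\theta$ has bounded archimedean dynamics, so the growth, which does occur, must come from a non-archimedean place, and you never establish it. Separately, along a parabolic orbit heights grow linearly in $|n|$, so an orbit has $O(H)$ rather than $O(\log H)$ points of height at most $H$. That is still $o(H^2)$, so this slip is harmless.

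You can close the gap with a tool already in your proposal.
\begin{itemize}
\item If $\theta$ is elliptic of infinite order, then $|t|<2$ and $t\notin\{0,\pm1\}$, so $t\notin\mathbb{Z}$. Pick a prime $\ell$ dividing the denominator of $t$.
\item The Newton polygon of $x^2-tx+1$ shows it has two roots in $\mathbb{Q}_\ell$, of valuations $\pm v_\ell(t)\neq 0$. Hence $\hat\theta$ is loxodromic on $\mathbb{P}^1(\mathbb{Q}_\ell)$: every non-fixed point tends to one fixed point as $n\to+\infty$ and to the other as $n\to-\infty$.
\item The paper's Case I argument then runs verbatim in $\mathbb{P}^1(\mathbb{Q}_\ell)$, where $\mathbb{Q}\cup\{\infty\}$ is dense. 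Only finitely many points of the orbits of $S$ lie outside a small neighborhood $U$ of the two fixed points, while infinitely many rationals lie outside $U$.
\end{itemize}
So apply your $t\notin\mathbb{Z}$ argument in (c) without the restriction $|t|>2$, and with two limit points rather than ``a fixed limit''. For $t\in\mathbb{Z}$ in (c), infinite order does force $|t|\ge 3$.

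Organized this way, you would get a proof that replaces the paper's appeal to the infinite generation of $K^{\times}/\mathbb{Q}^{\times}$ by elementary $\ell$-adic dynamics. As written, though, the proposal omits the decisive case.
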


\begin{proof}

{\bf Case I.} If $|\text{tr}(\theta)|\geq 2$, then this lemma can be proved by a topological argument. 

The isomorphism $\theta$ gives an element in $\text{PSL}(2,\mathbb{Q})<\text{PSL}(2,\mathbb{R})\cong \text{Isom}_+(\mathbb{H}^2)$, and it acts on $\mathbb{Q}\cup \{\infty\}\subset \mathbb{R}\cup \{\infty\}=\partial \mathbb{H}^2$ via the fractional linear transformation $\hat{\theta}$. We also use $\hat{\theta}$ to denote the action on $\mathbb{H}^2$ and $\partial \mathbb{H}^2\cong S^1$.

Since $|\text{tr}(\theta)|\geq 2$, $\hat{\theta}$ is a parabolic or hyperbolic isometry of $\mathbb{H}^2$. Then there exist $s_{+\infty},s_{-\infty}\in \partial \mathbb{H}^2$ (possibly $s_{+\infty}=s_{-\infty}$), such that for any $x\in \partial \mathbb{H}^2$, we have 
$$\lim_{n\to +\infty}\hat{\theta}^n(x)=s_{+\infty},\ \lim_{n\to -\infty}\hat{\theta}^n(x)=s_{-\infty}.$$
Let $U$ be an open neighborhood of $\{s_{+\infty},s_{-\infty}\}\subset \partial \mathbb{H}^2$ such that $\partial \mathbb{H}^2\setminus U$ has non-empty interior. 

Since $S\subset \mathbb{Q}\cup \{\infty\}$ is a finite set, 
$$X=\{(n,s)\in \mathbb{Z}\times S\ |\ \hat{\theta}^n(s)\in \partial \mathbb{H}^2\setminus U\}$$ must be a finite set. Since $\partial \mathbb{H}^2\setminus U$ has non-empty interior and $\mathbb{Q}\cup \{\infty\}$ is dense in $\partial \mathbb{H}^2$, $(\mathbb{Q}\cup \{\infty\})\cap (\partial \mathbb{H}^2\setminus U)$ is an infinite set. So
there exists $s_0\in (\mathbb{Q}\cup \{\infty\})\cap (\partial \mathbb{H}^2\setminus U)$ such that $s_0\ne \hat{\theta}^n(s)$ for any $(n,s)\in X$. 

So  $s_0\ne \hat{\theta}^n(s)$ for any $n\in \mathbb{Z}$ and $s\in S$, thus $\hat{\theta}^n(s_0)\notin S$ for all $n\in \mathbb{Z}$.

{\bf Case II.} If $|\text{tr}(\theta)|< 2$, then $\hat{\theta}$ is an elliptic isometry of $\mathbb{H}^2$. The above argument still works if $\hat{\theta}$ is a finite order automorphism of $\mathbb{Q}\cup \{\infty\}$, but fails if $\hat{\theta}$ has infinite order. In the latter case, for any $x\in \partial \mathbb{H}^2$, the orbit of $x$ under the $\hat{\theta}$-action is dense in $\partial \mathbb{H}^2$. The following number-theoretic argument was generated by Google Gemini and checked by the author.

Let $\theta=\begin{pmatrix}
a & b\\
c & d
\end{pmatrix}$, and let $\Delta=(\text{tr}(\theta))^2-4\in \mathbb{Q}$. Then we have $\Delta<0$, and let $K=\mathbb{Q}(\sqrt{\Delta})$ be the corresponding quadratic number field with one complex place. Let $\alpha,\beta$ be the fixed points of $\hat{\theta}$ on the complex plane $\mathbb{C}$, then $\alpha,\beta\in K$ are complex conjugates of each other, $\text{Re}(\alpha)=\text{Re}(\beta)\in \mathbb{Q}$ and $0\ne \text{Im}(\alpha)=-\text{Im}(\beta)\in \mathbb{Q}\cdot \sqrt{-\Delta}$.

Let $T:\mathbb{Q}\cup \{\infty\}\to K^1=\{k\in K\ |\ |k|=1\}$ be defined by $T(s)=\frac{s-\alpha}{s-\beta}$. Here $T(s)\in K^1$ holds since $\alpha=\bar{\beta}$ and $\alpha,\beta\in K$. $T$ is the restriction of a map $\mathbb{R}\cup \{\infty\}\to S^1$ defined by the same formula. $T$ is clearly injective, since it is the restriction of a fractional linear transformation. $T$ is also surjective, since for any $k\in K^1$, we have $T^{-1}(k)=\frac{\alpha-\beta k}{1-k}$ and can check $T^{-1}(k)\in \mathbb{Q}\cup \{\infty\}$ as follows.

If $k=1$, then  $\frac{\alpha-\beta k}{1-k}=\infty \in \mathbb{Q}\cup \{\infty\}$. If $k\ne 1$, we have 
\begin{align*}
& \frac{\alpha-\beta k}{1-k}=\frac{\text{Re}(\alpha)(1-k)+i\cdot \text{Im}(\alpha)(1+k)}{1-k}=\text{Re}(\alpha)+i\cdot \text{Im}(\alpha)\frac{1+k-\bar{k}-k\bar{k}}{|1-k|^2}\\
=\ & \text{Re}(\alpha)+2i^2\cdot \text{Im}(\alpha)\text{Im}(k)\frac{1}{|1-k|^2}=\text{Re}(\alpha)-2\text{Im}(\alpha)\text{Im}(k)\frac{1}{(\text{Re}(1-k))^2+(\text{Im}(k))^2}\in \mathbb{Q}.
\end{align*}
Here, the third equation holds since $|k|=1$. The last item is rational since all the real parts lie in $\mathbb{Q}$, while all the imaginary parts lie in $\mathbb{Q}\cdot \sqrt{-\Delta}$ and they are multiplied together in pairs.

So the map $T:\mathbb{Q}\cup \{\infty\}\to K^1$ is a bijection. A direct computation gives
\begin{align*}
T(\hat{\theta}(s))=\frac{a-c\alpha}{a-c\beta}\cdot T(s)
\end{align*}
for all $s\in \mathbb{Q}\cup \{\infty\}$. Let $\xi=\frac{a-c\alpha}{a-c\beta}\in K^1$, then $T$ maps 
$$Y=\{\hat{\theta}^n(s)\ |\ n\in \mathbb{Z},s\in S\}\subset \mathbb{Q}\cup \{\infty\}\text{\ to\ } Z=\{\xi^n \cdot T(s)\ |\ n\in \mathbb{Z},s\in S\}\subset K^1,$$ and it suffices to prove that $K^1\setminus Z$ is not empty.
 
We consider $K^1$ as an abelian group under multiplication.
 Since $S$ is finite, $Z$ is a union of finitely many cosets of the cyclic subgroup $\langle \xi \rangle<K^1$ generated by $\xi$. To prove $K^1\setminus Z\ne \emptyset$, it suffices to prove that $K^1$ is not virtually cyclic.
 
We use $K^{\times}$ to denote $K\setminus \{0\}$, which is also an abelian group under multiplication. $\mathbb{Q}^{\times}$ is defined similarly and we have $\mathbb{Q}^{\times} <K^{\times}$. Let $U:K^{\times}\to K^1$ be the group homomorphism defined by $U(k)=k/\bar{k}$, then the kernel of $U$ is $\mathbb{Q}^{\times}<K^{\times}$. So $K^{\times}/\mathbb{Q}^{\times}$ is isomorphic to a subgroup of $K^1$. (Actually, Hilbert's Theorem 90 implies that $U$ is surjective and $K^1\cong K^{\times}/\mathbb{Q}^{\times}$.)
 
The main result in \cite{Bra} implies that $K^{\times}/\mathbb{Q}^{\times}$ is an infinitely generated abelian group. So $K^1$ is also an infinitely generated abelian group, which can not be virtually cyclic.

The proof of this lemma is done.
 \end{proof}

Now we are ready to prove Theorem \ref{lexsubgroups}.

\begin{proof}
{\bf Step I. Construction of $G_1<G$ with finite-index.} 

We first apply Lemma \ref{simplegraph} to construct a finite cover of $M$ with a frame structure, and we still denote this manifold by $M$.
Then we apply Proposition \ref{technicalresult} to construct a frame preserving finite cover $\tilde{M}\to M$, and denote $\tilde{M}$ by $M_1$. Then $G_1=\pi_1(M_1)$ is a finite-index subgroup of $G=\pi_1(M)$. The objects in $M_1$ and the subsurfaces $\Sigma_c$ given in Proposition \ref{technicalresult} will be used in the constructions below.
\bigskip

{\bf Step II. Construction of $G_2\vartriangleleft G_1$ such that $G_1/G_2\cong \mathbb{Z}$.} 

Let $T_{|e_*|},T_{|e_{**}|}\subset M_1$ be the JSJ tori given by Proposition \ref{technicalresult}, and let $N=M_1\setminus\setminus (T_{|e_*|}\cup T_{|e_{**}|})$. Let the boundary components of $N$ corresponding to $T_{|e_*|}$ be $T_{e_*}$ and $T_{\bar{e}_*}$, and let the boundary components of $N$ corresponding to $T_{|e_{**}|}$ be $T_{e_{**}}$ and $T_{\bar{e}_{**}}$. Proposition \ref{technicalresult} also gives an involution $\tau:N\to N$ that swaps $T_{e_*}$ with $T_{e_{**}}$, and swaps $T_{\bar{e}_*}$ with $T_{\bar{e}_{**}}$.

We take countably infinitely many copies of $N$ and denote them by $\{N_i\}_{i\in \mathbb{Z}}$, and denote the boundary components of $N_i$ by $T_{e_*,i}, T_{\bar{e}_*,i},T_{e_{**},i},T_{\bar{e}_{**},i}$, which correspond to $T_{e_*}, T_{\bar{e}_*},T_{e_{**}},T_{\bar{e}_{**}}$ in $\partial N$, repsectively. For any $i$, we paste $T_{e_*,i+1}\subset \partial N_{i+1}$ with $T_{\bar{e}_*,i}\subset \partial N_{i}$, and paste $T_{e_{**},i+1}\subset \partial N_{i+1}$ with $T_{\bar{e}_{**},i}\subset \partial N_{i}$, by using the pasting maps of $M_1$. This process gives a non-compact $3$-manifold $$M_2=\cup_{i\in \mathbb{Z}}N_i$$ that is an infinite cyclic cover of $M_1$. A picture of $M_2$ can be found in Figure \ref{Figure3}.

We take $G_2=\pi_1(M_2)<G_1=\pi_1(M_1)$. Then we have $G_2\vartriangleleft G_1$ and $G_1/G_2\cong \mathbb{Z}$.

\begin{figure}[htbp]
    \centering
    \def\svgwidth{5in} 
   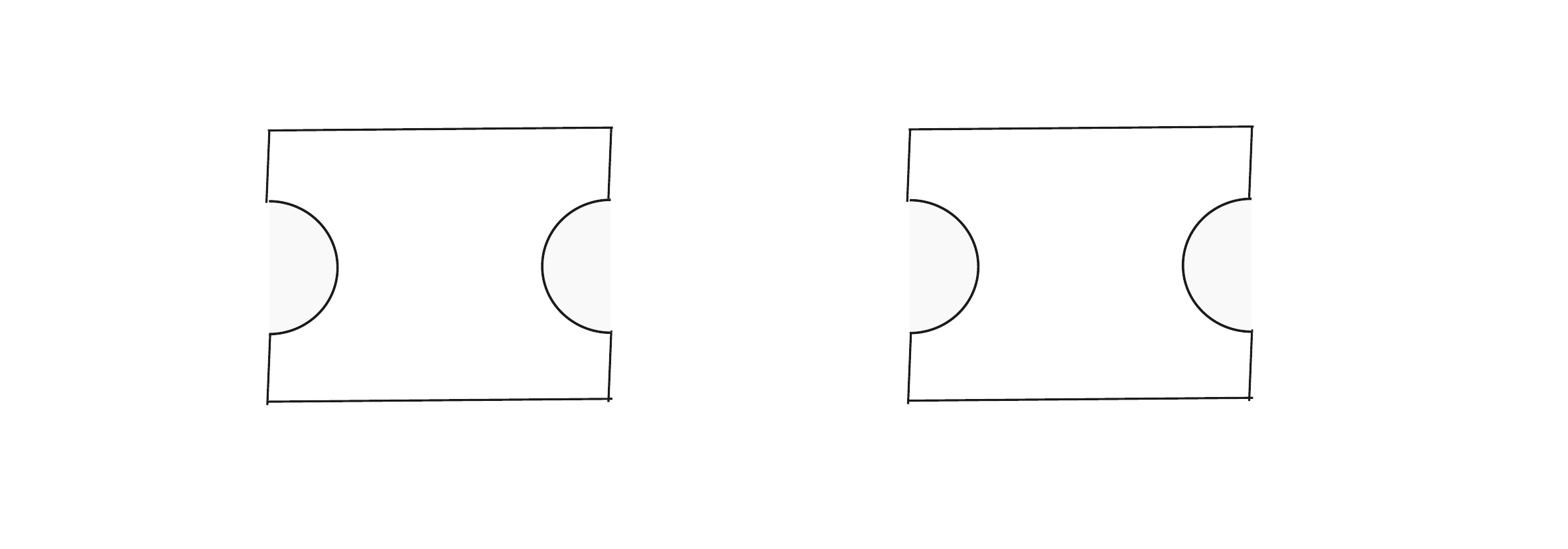
    \caption{A picture of the infinite cyclic cover $M_2$.}
     \label{Figure3}
\end{figure}

\bigskip

{\bf Step III. Construction of $G_3\vartriangleleft G_2$ with index $1$ or $2$.} 

We use $\theta$ to denote the isomorphism $(f_{\bar{e}_*})_{\#}\circ \iota:H_1(T_{e_*};\mathbb{Q})\to H_1(T_{e_*};\mathbb{Q})$ in Proposition \ref{technicalresult} (v), which has determinant $1$. We fix a basis of $H_1(T_{e_*};\mathbb{Q})\cong \mathbb{Q}^2$, let $s_1,s_2 \in \mathbb{Q}\cup \{\infty\}$ be the rational numbers corresponding to the projective slopes of $c_1^*,c_2^*$ in Proposition \ref{technicalresult}, and let $S=\{s_1,s_2\}$. Then Lemma \ref{ai} gives us an $s_0\in \mathbb{Q}\cup \{\infty\}$, such that $\hat{\theta}^i(s_0)\notin S$ for all $i\in \mathbb{Z}$. Here, $s_0$ corresponds to a projective slope on $T_{e_*}$, and let $c_0$ be an oriented rational slope representing it. Then for any $i\in \mathbb{Z}$, $c_i=\theta^i(c_0)$ is not a multiple of $c_1^*$ or $c_2^*$ for all $i\in \mathbb{Z}$. 

Now we apply Proposition \ref{technicalresult} to construct a connected, properly embedded, horizontal subsurface $\Sigma_i=\Sigma_{c_i}\to $$N_i$ whose boundaries are $K_i$ multiples of following unoriented rational slopes: $c_i$ on $T_{e_*,i}$, $\tau(c_i)$ on $T_{e_{**},i}$, $\iota(c_i)$ on $T_{\bar{e}_*,i}$, $\tau(\iota(c_i))$ on $T_{\bar{e}_{**},i}$, for some $K_i\in \mathbb{Z}\setminus \{0\}$.

Note that we have 
$$(f_{\bar{e}_*})_{\#}(\iota(c_i))=((f_{\bar{e}_*})_{\#}\circ \iota)(c_i)=\theta(c_i)=c_{i+1}$$ and 
$$(f_{\bar{e}_{**}})_{\#}\big(\tau(\iota(c_i))\big)=\tau\big(((f_{\bar{e}_*})_{\#}\circ \iota) (c_i)\big)=\tau(\theta(c_i))=\tau(c_{i+1}).$$ Here the first equality of the second equation holds since $\tau$ is induced by a homeormophism of $M_1$. 
Let $f_{\bar{e}_*,i}:T_{\bar{e}_*,i}\to T_{e_{*},i+1}$ and $f_{\bar{e}_{**},i}:T_{\bar{e}_{**},i}\to T_{e_{**},i+1}$ be the pasting maps between $N_{i}$ and $N_{i+1}$, as in Figure \ref{Figure3}. Then the above two equations imply that $f_{\bar{e}_*,i}(\Sigma_{i}\cap T_{\bar{e}_*,i})$ and $f_{\bar{e}_{**},i}(\Sigma_{i}\cap T_{\bar{e}_{**},i})$ are rational multiples of $\Sigma_{i+1}\cap T_{e_*,i+1}$ and $\Sigma_{i+1}\cap T_{e_{**},i+1}$, respectively, as unoriented multi-slopes. Moreover, the two multiplicative constants are the same.

In the following claim, we construct a double cover $(\tilde{N}_i,\tilde{\Sigma}_i)$ of $(N_i,\Sigma_i)$ to fix some orientability issue.

{\it Claim.} For any $i\in \mathbb{Z}$, there exists a (possibly disconnected) double cover $p_i:\tilde{N}_i\to N_i$ and a properly embedded, horizontal subsurface $\tilde{\Sigma}_i\to \tilde{N}_i$, such that the following hold.
\begin{enumerate}
\item[(i)] $\tilde{\Sigma}_i=p_i^{-1}(\Sigma_i)$ is orientable and we equip it with an orientation.
\item[(ii)] The boundary of $\tilde{N}_i$ consists of eight components, denoted by $\tilde{T}_{\bullet,i}^+,\tilde{T}_{\bullet,i}^-$, with $\bullet=e_*,e_{**},\bar{e}_*,\bar{e}_{**}$, such that $p_i$ maps both $\tilde{T}_{\bullet,i}^+$ and $\tilde{T}_{\bullet,i}^-$ to $T_{\bullet,i}$ by homeomorphisms.
\item[(iii)] We equip $\tilde{\Sigma}_i\cap \tilde{T}_{\bullet,i}^+$ and $\tilde{\Sigma}_i\cap \tilde{T}_{\bullet,i}^-$ with the boundary orientations induced by the orientation of $\tilde{\Sigma}_i$ (in item (i)), then they are mapped to opposite oriented multi-slopes on $T_{\bullet,i}$ (i.e. they give the same unoriented multi-slope, but have opposite orientations).
\end{enumerate}

If $\Sigma_i$ is orientable, we take $(\tilde{N}_i,\tilde{\Sigma}_i)$ to be two copies of $(N_i,\Sigma_i)$, denote the boundary components of one copy of $N_i$ by $\tilde{T}_{\bullet,i}^+$, and use $\tilde{T}_{\bullet,i}^-$ for the boundary components of the other copy of $N_i$. We equip two copies of $\Sigma_i$ with opposite orientations. Then $(\tilde{N}_i,\tilde{\Sigma}_i)$ clearly satisfies items (i)-(iii).

If $\Sigma_i$ is non-orientable, then its neignborhood $\mathcal{N}(\Sigma_i)\subset N_i$ is a twisted $I$-bundle over $\Sigma_i$, and $\partial \mathcal{N}(\Sigma_i)$ is a connected, orientable, horizontal subsurface in $N_i$. Since $\Sigma_i \subset N_i$ is horizontal, $N_i\setminus\setminus \mathcal{N}(\Sigma_i)$ is a twisted $I$-bundle over a non-orientable surface $\Sigma_i'\subset N_i\setminus\setminus \mathcal{N}(\Sigma_i)$. We take the surjective homomorphism 
$$\phi:\pi_1(N_i)\to \mathbb{Z}/2\mathbb{Z}$$ 
given by the mod-$2$ intersection number with $\Sigma_i\cup \Sigma_i'\subset N_i$.

Let $\tilde{N}_i$ be the double cover of $N_i$ corresponding to the kernel of $\phi$, and let $\sigma:\tilde{N}_i\to \tilde{N}_i$ be the non-trivial deck transformation. Let $\tilde{\Sigma}_i$ be the preimage of $\Sigma_i$ in $\tilde{N}_i$, then it is the orientable double cover of $\Sigma_i$. For any boundary component $T_{\bullet,i}\subset \partial N_i$, $\Sigma_i\cap T_{\bullet,i}$ is parallel to $\Sigma_i'\cap T_{\bullet,i}$ as unoriented multi-slopes, so the preimage of $T_{\bullet,i}$ in $\tilde{N}_i$ consists of two components: $\tilde{T}_{\bullet,i}^+$  and $\tilde{T}_{\bullet,i}^-$. We equip $\tilde{\Sigma}_i$ with an arbitrary orientation, then the deck transfomation $\sigma:\tilde{N}_i\to \tilde{N}_i$ maps $\tilde{\Sigma}_i$ to $-\tilde{\Sigma}_i$, as oriented surfaces. So $\sigma$ sends $\tilde{\Sigma}_i\cap \tilde{T}_{\bullet,i}^+$ to $(-\tilde{\Sigma}_i)\cap \tilde{T}_{\bullet,i}^-$, as oriented multi-slopes. Then item (iii) holds since $\sigma$ is a deck transformation of $\tilde{N}_i\to N_i$.

This finishes the proof of the Claim.

Since $\tilde{\Sigma}_i$ is an orientable, horizontal subsurface of $\tilde{N}_i$, $\tilde{N}_i\setminus \setminus \tilde{\Sigma}_i$ is a union of trivial $I$-bundles and twisted $I$-bundles (over non-orientable surfaces).
By Proposition \ref{technicalresult} (i), $\Sigma_i$ intersects with each vertex manifold of $N_i$ at a connected, orientable, horizontal subsurface. Then the same holds for $\tilde{\Sigma}_i\subset \tilde{N}_i$, thus for any vertex manifold $\tilde{N}_v\subset \tilde{N}_i$, $\tilde{N}_v\setminus \setminus (\tilde{\Sigma}_i\cap \tilde{N}_v)$ is connected. So $\tilde{N}_i\setminus \setminus \tilde{\Sigma}_i$ contains no twisted $I$-bundles, and $\tilde{\Sigma}_i\subset \tilde{N}_i$ is a fiber surface.

Then we paste $\{\tilde{N}_i\}_{i\in \mathbb{Z}}$ along their boundaries to obtain a double cover $M_3$ of $M_2$ such that the following hold.
\begin{enumerate}
\item[(a)] Each $\tilde{N}_i$ contains an embedded fiber surface $\tilde{\Sigma}_i$ such that items (i)-(iii) holds. 
\item [(b)] $\tilde{\Sigma}_i$ intersects with $\tilde{T}_{e_*,i}^{\pm},\tilde{T}_{e_{**},i}^{\pm}$ along $k_i$ copies of some oriented slope for the same $k_i\in \mathbb{Z}_{>0}$, and intersects with $\tilde{T}_{\bar{e}_*,i}^{\pm},\tilde{T}_{\bar{e}_{**},i}^{\pm}$ along $k_i'$ copies of some oriented slope for the same $k_i'\in \mathbb{Z}_{>0}$, and we may have $k_i\ne k_i'$.
\item [(c)] For any decomposition torus $T\subset M_2$ given by identifying $T_{\bar{\bullet},i}\subset \partial N_i$ and $T_{\bullet,i+1}\subset \partial N_{i+1}$, with $\bullet=e_*$ or $e_{**}$, we take a bijection between $\rho_{\bullet,i}:\{\tilde{T}_{\bar{\bullet},i}^+,\tilde{T}_{\bar{\bullet},i}^-\}\to \{\tilde{T}_{\bullet,i+1}^+,\tilde{T}_{\bullet,i+1}^-\}$ to paste these boundary tori of $\tilde{N}_i$ and $\tilde{N}_{i+1}$, by using the pasting map $T_{\bar{\bullet},i}\to T_{\bullet,i+1}$ in $M_2$. We claim that there exists a $\rho_{\bullet,i}$ so that for any $X\in \{\tilde{T}_{\bar{\bullet},i}^+,\tilde{T}_{\bar{\bullet},i}^-\}$, $\tilde{\Sigma}_i\cap X$ and $\tilde{\Sigma}_{i+1}\cap \rho_{\bullet,i}(X)$ are mapped to oriented multi-slopes in $T$ that are multiples of the same oriented slope on $T$, where the two multiplicative constants have opposite signs and may have different absolute values.
\end{enumerate}

Here, item (a) follows from the Claim. Item (b) follows from Proposition \ref{technicalresult} (ii)-(iv) and the construction of $(\tilde{N}_i,\tilde{\Sigma}_i)$. For item (c), the existence of $\rho_{\bullet,i}$ follows from item (iii) in the claim. Since $\tilde{\Sigma}_i\cap \tilde{T}_{\bar{\bullet},i}^+$ and $\tilde{\Sigma}_i\cap\tilde{T}_{\bar{\bullet},i}^-$ are mapped to opposite oriented multi-slopes on $T$, and so do $\tilde{\Sigma}_{i+1}\cap \tilde{T}_{\bullet,i+1}^+$ and $\tilde{\Sigma}_{i+1}\cap\tilde{T}_{\bullet,i+1}^-$, such a bijection $\rho_{\bullet,i}$ always exists.

The resulting manifold $M_3$ may have one or two components. If $M_3$ has two components, each component is a copy of $M_2$, and we abuse notation to use $M_3$ to denote one of its components. Then we take $G_3=\pi_1(M_3)<G_2=\pi_1(M_2)$, which is normal since $[G_2:G_3]=1$ or $2$.

\bigskip

{\bf Step IV. Construction of a free subgroup $G_4\vartriangleleft G_3$ such that $G_3/G_4$ is isomorphic to a subgroup of $\mathbb{Q}$.}

Now we apply the Mayer–Vietoris sequence to $M_3$ by taking
$$U=\bigcup_{i\text{\ even}}\tilde{N}_i\text{\ and\ }V=\bigcup_{i\text{\ odd}}\tilde{N}_i.$$
Then we have 
\begin{align}\label{7.1}
H_1(M_3;\mathbb{Z})\cong \Big( \oplus_{i\in \mathbb{Z}} H_1(\tilde{N}_i;\mathbb{Z})/\text{im}\big(\oplus_{i\in \mathbb{Z}}H_1(\tilde{N}_i\cap\tilde{N}_{i+1};\mathbb{Z})\big)\Big)\oplus \mathbb{Z}^{\infty}.
\end{align}
The $\mathbb{Z}^{\infty}$ in the second direct summand arises from
$\oplus H_0(\tilde{N}_i\cap \tilde{N}_{i+1};\mathbb{Z})$, since $\tilde{N}_i\cap \tilde{N}_{i+1}$ has more number of components than both $\tilde{N}_i$ and $\tilde{N}_{i+1}$ ($4$ versus $2$ or $1$ if $[G_2:G_3]=2$, and $2$ versus $1$ if $G_3=G_2$).
In the first direct summand, each $H_1(\tilde{N}_i\cap  \tilde{N}_{i+1};\mathbb{Z})$ is sent to $H_1(\tilde{N}_i;\mathbb{Z})\oplus H_1(\tilde{N}_{i+1};\mathbb{Z})$, where $H_1(\tilde{N}_i\cap \tilde{N}_{i+1};\mathbb{Z})\to H_1(\tilde{N}_i;\mathbb{Z})$ is induced by the inclusion, and $H_1(\tilde{N}_i\cap \tilde{N}_{i+1};\mathbb{Z})\to H_1(\tilde{N}_{i+1};\mathbb{Z})$ is the negative of the inclusion-induced homomorphism. 

Let $\phi_i:H_1(\tilde{N}_i;\mathbb{Z})\to \mathbb{Z}\cong H_1(S^1;\mathbb{Z})$ be the homomorphism induced by the fibration $\tilde{N}_i\to S^1$ dual to $\tilde{\Sigma}_i$, such that curves with positive algebraic intersection number with $\tilde{\Sigma}_i$ are sent to positive integers. We consider two homomorphisms 
\begin{align}\label{7.2}
\phi_{i,i+1}^{i}:H_1(\tilde{N}_i\cap \tilde{N}_{i+1};\mathbb{Z})\to H_1(\tilde{N}_i;\mathbb{Z})\xrightarrow{\phi_i}\mathbb{Z}
\end{align} and
\begin{align}\label{7.3}
\phi_{i,i+1}^{i+1}:H_1(\tilde{N}_i\cap \tilde{N}_{i+1};\mathbb{Z})\to H_1(\tilde{N}_{i+1};\mathbb{Z})\xrightarrow{\phi_{i+1}}\mathbb{Z},
\end{align} 
where both first homomorphisms are induced by inclusions. By items (b) and (c) in Step III, as oriented multi-slopes on $\tilde{N}_i\cap \tilde{N}_{i+1}$ (consisting of two or four tori), we have 
$$k_{i+1}\cdot (\tilde{\Sigma}_i\cap (\tilde{N}_i\cap \tilde{N}_{i+1}))=-k_i'\cdot (\tilde{\Sigma}_{i+1}\cap (\tilde{N}_i\cap \tilde{N}_{i+1})).$$
This implies that 
\begin{align}\label{7.4}
k_i'\cdot \phi_{i,i+1}^{i+1}=k_{i+1}\cdot \phi_{i,i+1}^{i}.
\end{align}
Here the negative sign disappears since $\tilde{N}_i\cap \tilde{N}_{i+1}$ has opposite orientations induced from $\tilde{N}_i$ and $\tilde{N}_{i+1}$.

We let $\psi_i:H_1(\tilde{N}_i;\mathbb{Z})\to \mathbb{\mathbb{Q}}$ be defined by 
\begin{align}\label{7.5}
\psi_i(x)=
\begin{cases}
(\Pi_{j=1}^i \frac{k_{j-1}'}{k_j})\cdot \phi_i(x) & \text{if\ } i\geq 0,\\
(\Pi_{j=0}^{-i-1} \frac{k_{-j}}{k_{-j-1}'})\cdot \phi_i(x) & \text{if\ } i<0.
\end{cases}
\end{align}
We define $\psi_{i,i+1}^i,\psi_{i,i+1}^{i+1}:H_1(\tilde{N}_i\cap \tilde{N}_{i+1};\mathbb{Z})\to \mathbb{Q}$ as in equations \eqref{7.2} and \eqref{7.3}, by replacing $\phi_i$ and $\phi_{i+1}$ by $\psi_i$ and $\psi_{i+1}$, respectively. Then equation \eqref{7.4} implies that
\begin{align}\label{7.6}
\psi_{i,i+1}^i=\psi_{i,i+1}^{i+1}.
\end{align}

Now we construct a homomorphism 
$$\psi':\Big(\oplus_{i\in \mathbb{Z}}H_1(\tilde{N}_i;\mathbb{Z})\Big)\oplus \mathbb{Z}^{\infty}\to \mathbb{Q}$$
that restricts to each $H_1(\tilde{N}_i;\mathbb{Z})$ by $\psi_i$ and restricts to $\mathbb{Z}^{\infty}$ by $0$. Then equation \eqref{7.6} implies that $\psi'$ vanishes on $\text{im}\big(\oplus_{i\in \mathbb{Z}}H_1(\tilde{N}_i\cap  \tilde{N}_{i+1};\mathbb{Z})\big)<\oplus_{i\in \mathbb{Z}}H_1(\tilde{N}_i;\mathbb{Z})$, and equation \eqref{7.1} implies that $\psi'$ induces a well-defined homomorphism
\begin{align}\label{7.7}
\psi: H_1(M_3;\mathbb{Z})\to \mathbb{Q}.
\end{align}
Note that $\text{im}(\psi)$ contains $\mathbb{Z}\subset \mathbb{Q}$, by restricting $\psi$ to $H_1(\tilde{N}_0;\mathbb{Z})$. The image of $\psi$ depends on the sequence of positive integers $\{k_i,k_i'\}_{i\in \mathbb{Z}}$, so it may not be finitely generated, and may not be equal to $\mathbb{Q}$.

Let $p_4:M_4\to M_3$ be the covering space of $M_3$ corresponding to 
$$\text{ker}(\pi_1(M_3)\to H_1(M_3;\mathbb{Z})\xrightarrow{\psi}\mathbb{Q})<G_3=\pi_1(M_3),$$
and let $G_4=\pi_1(M_4)$. Then we have $G_4\vartriangleleft G_3$ and $G_3/G_4$ is isomorphic to a subgroup of $\mathbb{Q}$. It remains to check that $G_4=\pi_1(M_4)$ is a free group.

For any $\tilde{N}_i\subset M_3$, the composition 
$$\pi_1(\tilde{N}_i)\to \pi_1(M_3)\to H_1(M_3;\mathbb{Z})\xrightarrow{\psi}\mathbb{Q}$$ is given by the dual of the fiber surface $\tilde{\Sigma}_i$, up to multiplying a non-zero rational number. So each component of $p_4^{-1}(\tilde{N}_i)\subset M_4$ is homeomorphic to $\tilde{\Sigma}_i\times \mathbb{R}$, which is homotopy equivalent to $\tilde{\Sigma}_i$, and the number of components of $p_4^{-1}(\tilde{N}_i)$ is finite or countably infinite.

So up to homotopy equivalence, $M_4$ can be obtained by pasting infinitely many compact surfaces (copies of $\tilde{\Sigma}_i$) along their boundaries. Therefore, $M_4$ is homotopy equivalent to a non-compact surface, and $\pi_1(M_4)$ is isomorphic to an (infinitely generated) free group.

This finishes the proof of Proposition \ref{lexsubgroups}.
\end{proof}

Now we are ready to prove Theorem \ref{lex}, which states that groups of all non-virtually fibered closed graph $3$-manifolds lie in the family Lex.

\begin{proof}



Let $M$ be a closed graph $3$-manifold that is not virtually fibered.
Then Proposition \ref{lexsubgroups} provides us a sequence of subgroups 
$$\pi_1(M)=G>G_1\vartriangleright G_2\vartriangleright G_3\vartriangleright G_4$$
such that $[G:G_1],[G_2:G_3]$ are finite, $G_1/G_2,G_3/G_4$ are abelian, and $G_4$ is free.

Since $G_4$ is free, Proposition \ref{lexproperty} (1) implies that $G_4$ lies in the family Lex. Since abelian groups are amenable and $G_3/G_4$ is abelian, Proposition \ref{lexproperty} (3) implies that $G_3$ lies in the family Lex. Then Proposition \ref{lexproperty} (2) implies that $G_2$ lies in the family Lex, Proposition \ref{lexproperty} (3) implies that $G_1$ lies in the family Lex (since $G_1/G_2\cong \mathbb{Z}$), and Proposition \ref{lexproperty} (2) implies that $G$ lies in the family Lex.

\end{proof}

\section{All finitely generated $3$-manifold groups lie in the family Lex}\label{morelexsection}
Proposition \ref{lexsubgroups} inspires the following definition.
\begin{definition}\label{starn}
For any integer $n\geq 0$, we say that a group $G$ satisfies {\it property $(\star_n)$} if it contains a sequence of subgroups 
\begin{align}\label{8.1}
G=G_0\vartriangleright  G_1\vartriangleright  G_2\vartriangleright  \cdots \vartriangleright  G_{2n}
\end{align}
such that the following hold.
\begin{enumerate}
\item For any $i=0,\cdots,n-1$, $G_{2i}/G_{2i+1}$ is a finite group.
\item For any $i=0,\cdots,n-1$, $G_{2i+1}/G_{2i+2}$ is an abelian group (possibly infinitely generated).
\item $G_{2n}$ is isomorphic to a free group (possibly infinitely generated).
\end{enumerate}
\end{definition}
Here, we allow the finite, abelian, and free groups in the definition to be trivial. So if a group satisfies property $(\star_n)$, then it satisfies property $(\star_m)$ for any $m\geq n$.

The proof of Theorem \ref{lex} (at the end of the last section) implies that any group $G$ satisfying property $(\star_n)$ for some $n$ lies in the family Lex.

\begin{lemma}\label{elementarylex}
If $G$ is a group satisfying property $(\star_n)$ for some integer $n\geq 0$, then $G$ lies in the family Lex.
\end{lemma}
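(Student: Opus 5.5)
The plan is to argue by downward induction along the chain \eqref{8.1}, showing that each $G_k$ lies in the family Lex, starting from $k=2n$ and ending at $G_0=G$. Only the three items of Proposition \ref{lexproperty} are needed, exactly as in the proof of Theorem \ref{lex} at the end of the previous section.

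The base case is $G_{2n}$. By Definition \ref{starn} (3) it is a free group, possibly trivial or infinitely generated, so Proposition \ref{lexproperty} (1) puts it in the family Lex. If $n=0$ we are done, since then $G=G_0$ is free.

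For the inductive step, fix $i\in\{0,\dots,n-1\}$ and assume $G_{2i+2}$ lies in Lex. Since $G_{2i+2}\vartriangleleft G_{2i+1}$, there is an exact sequence
$$1\to G_{2i+2}\to G_{2i+1}\to G_{2i+1}/G_{2i+2}\to 1.$$
The quotient is abelian by Definition \ref{starn} (2), hence amenable, so Proposition \ref{lexproperty} (3) shows $G_{2i+1}$ lies in Lex. Next, $G_{2i+1}$ is a normal subgroup of $G_{2i}$ with finite quotient, hence of finite index. Proposition \ref{lexproperty} (2) then gives that $G_{2i}$ lies in Lex. Alternatively, finite groups are amenable, so Proposition \ref{lexproperty} (3) could be applied again.

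Iterating this step from $i=n-1$ down to $i=0$ yields $G=G_0\in$ Lex. There is no real obstacle here; the only points to check are that abelian groups, including infinitely generated ones, are amenable, and that trivial factors cause no issue. Both are standard.
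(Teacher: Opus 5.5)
Your proposal is correct and is the same argument as the paper's. The paper proves this lemma by noting that the proof of Theorem \ref{lex} carries over verbatim to any chain as in \eqref{8.1}: apply Proposition \ref{lexproperty} (1) to the free group $G_{2n}$, then alternately (3) for the abelian quotients and (2) for the finite-index steps, which is exactly your downward induction.
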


Here are some elementary properties of groups satisfying property $(\star_n)$.
\begin{lemma}\label{starnproperty}
Let $G,H$ be two groups satisfying property $(\star_n)$ for some integer $n\geq 0$.
\begin{enumerate}
\item If $A<G$ is a subgroup, then $A$ satisfies property $(\star_n)$.
\item If $G<B$ is a finite-index subgroup, then $B$ satisfies property $(\star_n)$.
\item $G*H$ satisfies property $(\star_n)$.
\end{enumerate}
\end{lemma}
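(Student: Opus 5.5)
I would prove the three items in order, following the pattern of Lemma \ref{polyfreeproperty}. Throughout, fix a chain $G=G_0\vartriangleright G_1\vartriangleright\cdots\vartriangleright G_{2n}$ certifying $(\star_n)$ for $G$, and similarly $H=H_0\vartriangleright\cdots\vartriangleright H_{2n}$ for $H$.

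\textbf{Item (1).} Intersect with $A$. Set $A_k=A\cap G_k$; each $A_{k+1}$ is normal in $A_k$. As in the proof of Lemma \ref{polyfreeproperty} (1), there is an injection
$$A_k/A_{k+1}\cong A_kG_{k+1}/G_{k+1}\hookrightarrow G_k/G_{k+1}.$$
Subgroups of finite groups are finite, subgroups of abelian groups are abelian, and $A_{2n}<G_{2n}$ is free by Nielsen--Schreier. So $A$ satisfies $(\star_n)$.

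\textbf{Item (2).} Here $G<B$ has finite index, and the issue is that $G$ need not be normal in $B$. I would take the normal core $B'=\bigcap_{b\in B} bGb^{-1}$, which is normal of finite index in $B$. It is a subgroup of $G$, so it satisfies $(\star_n)$ by item (1), via a chain $B'=B'_0\vartriangleright B'_1\vartriangleright\cdots$. I then need to absorb the finite quotient $B/B'$ into the first step. Intersecting with $B'$ as in item (1) only gives normality inside $B'$, not inside $B$. The cleanest fix is to rebuild the chain for $B$ directly: use $B\vartriangleright B'$ as the first finite step, and shift by noting that
$$B'_0/B'_1\ \text{finite},\quad B'_1/B'_2\ \text{abelian},\ \ldots$$
The leftover problem is that $B/B'$ and $B'_0/B'_1$ are two consecutive finite steps. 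To merge them, replace $B'_1$ by its normal core $\bigcap_{b\in B} bB'_1b^{-1}$ in $B$. This core is normal of finite index in $B$, since $B'_1$ has finite index in $B$ and hence finitely many conjugates. Then restrict the rest of the chain to this core, which keeps it $(\star)$-type by item (1). So $B$ has first finite step $B\vartriangleright \mathrm{core}_B(B'_1)$, and all remaining steps come from intersecting the chain $B'_1\vartriangleright B'_2\vartriangleright\cdots$ with the core. I expect this index bookkeeping to be the main (though mild) obstacle, so I would phrase it as the general fact that $G_1$ may always be taken normal in any overgroup of finite index.

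\textbf{Item (3).} I would mimic Lemma \ref{polyfreeproperty} (3). Let $K=\ker\big(G*H\to G/G_1\times H/H_1\big)$. This is a finite-index normal subgroup. By Kurosh, $K$ is a free product of conjugates of $G_1$, conjugates of $H_1$, and a free group. Next map $K$ onto the direct product of the abelianizations. More precisely, each conjugate of $G_1$ maps to $G_1/G_2$ and each conjugate of $H_1$ maps to $H_1/H_2$, which gives an abelian quotient
$$K\to \bigoplus \big(G_1/G_2\big)\oplus\bigoplus\big(H_1/H_2\big).$$
Its kernel is again, by Kurosh, a free product of conjugates of $G_2$, conjugates of $H_2$, and a free group. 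Iterating this alternately, finite and abelian steps, each stage is a free product of conjugates of $G_k$, conjugates of $H_k$, and a free group. At the finite steps I take the kernel of the map to the product of the finitely many relevant finite quotients. Since there are only finitely many free factors up to the free part, this map is finite and each step is normal. After $2n$ steps the group is a free product of free groups, hence free.

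One point needs care here. After the first Kurosh step the number of factors may be infinite, so a finite step could be an infinite product. To handle this, I would use that the decomposition of $K$ has finitely many factors, since $K$ has finite index in $G*H$ and the Kurosh graph of groups is finite. Then the kernel of the abelian map, though it has infinitely many factors, only needs its next quotient to be abelian or finite as a direct sum of copies. At finite steps I would handle this by passing to the kernel of the map to $\bigoplus$ of finite groups, which is normal but of infinite index. Since $(\star_n)$ only requires $G_{2i}/G_{2i+1}$ finite, this is the real obstacle. I would resolve it by instead realizing the iterated kernels as fundamental groups of covers of a finite graph of spaces and using item (2) to reorganize, first shrinking $G_{2i}$ to be normal in $G$ via item (2).
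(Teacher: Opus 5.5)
Your items (1) and (2) are correct. In (2), $\mathrm{core}_B(B'_1)$ is just a finite-index normal subgroup of $B$ contained in $G_1$, which is what the paper takes directly, so the detour through $B'$ is unnecessary.

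\textbf{The gap is in item (3)}, exactly at the obstacle you flag. After the first abelian step, the kernel $K_2$ has infinite index in $G*H$. Its Kurosh decomposition therefore generally has infinitely many factors $xG_2x^{-1}$ and $yH_2y^{-1}$. Mapping to one copy of $G_2/G_3$ or $H_2/H_3$ per factor lands in an infinite product of finite groups, so that step is not a finite step.

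Your proposed remedy does not repair this. Item (2) is about finite-index overgroups, but $G_{2i}$ with $i\geq 1$ typically has infinite index in $G$, because the abelian quotients can be infinite. So item (2) cannot be used to make $G_{2i}$ normal in $G$. The appeal to covers of a finite graph of spaces is not an argument either: the kernels involved have infinite index, so their graph-of-groups decompositions really are infinite.

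\textbf{The missing idea is to index the finite quotient by isomorphism type rather than by factor.}
\begin{itemize}
\item Let $K_{2i}$ be the current group in your chain. Each non-free Kurosh factor of $K_{2i}$ has the form $xG_{2i}x^{-1}$ or $yH_{2i}y^{-1}$ with $x,y\in G*H$.
\item Fix one such $x$ or $y$ per factor. Define $K_{2i}\to G_{2i}/G_{2i+1}\times H_{2i}/H_{2i+1}$ by $xax^{-1}\mapsto (aG_{2i+1},1)$ on the $G$-type factors, analogously on the $H$-type factors, and trivially on the free factor. This is a homomorphism by the universal property of free products.
\item The target is finite however many factors there are, and the kernel is normal in $K_{2i}$.
\item The kernel meets $xG_{2i}x^{-1}$ exactly in $xG_{2i+1}x^{-1}$. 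By Kurosh, the kernel is again a free product of conjugates of $G_{2i+1}$, conjugates of $H_{2i+1}$, and a free group.
\end{itemize}
With this change your iteration goes through. Your abelian steps were already fine, since infinite direct sums of abelian groups are abelian.

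The paper organizes exactly this as an induction on $n$. Its strengthened statement is: a free product $*_{\lambda\in\Lambda}G_\lambda$, with $\Lambda$ arbitrary but only finitely many isomorphism types of factors, each satisfying $(\star_n)$, itself satisfies $(\star_n)$. The inductive step maps every factor through the finite quotient of its type, then through the abelian quotient of its type, and applies the induction hypothesis to the resulting kernel.
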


\begin{proof}
We first prove item (1). Let 
$$G=G_0\vartriangleright G_1\vartriangleright G_2\vartriangleright \cdots \vartriangleright G_{2n}$$
be a sequence of subgroups satisfying Definition \ref{starn}. Then we take $A_i=A\cap G_i$ to get a sequence of subgroups
\begin{align}\label{8.2}
A=A_0>A_1>A_2>\cdots>A_{2n}.
\end{align}
Since $G_{i+1}\vartriangleleft  G_i$, it is clear that $A_{i+1}=A\cap G_{i+1}$ is a normal subgroup of $A_i=A\cap G_i$.
Moreover, we have
$$A_i/A_{i+1}=A\cap G_i/A\cap G_{i+1}\cong (A\cap G_i)\cdot G_{i+1}/G_{i+1}<G_i/G_{i+1}.$$
Since a subgroup of a finite/abelian group is still finite/abelian, $A_i/A_{i+1}$ satisfies the condition in Definition \ref{starn} (1) (2). Since $A_{2n}=A\cap G_{2n}$ is a subgroup of a free group $G_{2n}$, $A_{2n}$ is also a free group. So the sequence of subgroups in \eqref{8.2} satisfies the condition in Definition \ref{starn}, thus $A$ has property $(\star_n)$.

Now we prove item (2). Since $G_1<G$ and $G<B$ are both finite-index subgroups, there exists a finite-index normal subgroup $B_1\vartriangleleft B$ contained in $G_1$. Then we take $B_i=G_i\cap B_1$ for $i=2,\cdots,2n$, and we use 
$$B=B_0\vartriangleright B_1>B_2>\cdots>B_{2n}$$ 
to check that $B$ satisfies property $(\star_n)$, as in the proof of item (1). 

We prove item (3) by inductively proving the following stronger statement. For any integer $n\geq 0$, let $$G=*_{\lambda\in \Lambda}G_{\lambda}$$
be a free product such that $\{G_{\lambda}\}_{\lambda\in \Lambda}$ has only finitely many isomorphic types and each $G_{\lambda}$ satisfies property $(\star_n)$, then $G$ satisfies property $(\star_n)$. Here we have no restriction on the cardinality of $\Lambda$, but we will only apply the result to countable $\Lambda$, since we only care about finitely generated $3$-manifold groups.

If $n=0$, then definition of property $(\star_0)$ implies that each $G_{\lambda}$ is isomorphic to a free group, then $G=*_{\lambda\in \Lambda}G_{\lambda}$ is also isomorphic to a free group and satisfies property $(\star_0)$.

Now, suppose the above statement holds for $n$, and we want to prove it for $n+1$. Suppose that $G=*_{\lambda\in \Lambda}G_{\lambda}$ such that each $G_{\lambda}$ is isomorphic to one of $G_{\lambda_1},\cdots,G_{\lambda_k}$, and each $G_{\lambda_i}$ satisfies property $(\star_{n+1})$. For any $i=1,\cdots,k$, $G_{\lambda_i}$ has a sequence of subgroups satisfying Definition \ref{starn}:
$$G_{\lambda_i}=G_{\lambda_i,0}\vartriangleright G_{\lambda_i,1}\vartriangleright G_{\lambda_i,2}\vartriangleright \cdots \vartriangleright G_{\lambda_i,2n+2}.$$

We take the surjective homomorphism 
$$\phi:G=*_{\lambda\in \Lambda}G_{\lambda}\to \Pi_{i=1}^k G_{\lambda_i}/G_{\lambda_i,1}$$
that restricts to each free factor $G_{\lambda}$ as 
$$G_{\lambda}\to G_{\lambda_i}\to G_{\lambda_i}/G_{\lambda_i,1}\to \Pi_{i=1}^k G_{\lambda_i}/G_{\lambda_i,1}.$$
Here, the first homomorphism is an isomorphism to some $G_{\lambda_i}$ with $i=1,\cdots,k$, the second homomorphism is the quotient homomorphism, and the third one is the natural inclusion. Then we take $G_1=\text{ker}(\phi)$, which is a finite-index normal subgroup of $G$ since $\Pi_{i=1}^k G_{\lambda_i}/G_{\lambda_i,1}$ is a finite group. 

The Kurosh Subgroup Theorem implies that 
$$G_1\cong (*_{\lambda'\in \Lambda_1}G_{\lambda',1})*F$$ is a free product, where each $G_{\lambda',1}$ is isomorphic to one of $G_{\lambda_1,1},\cdots, G_{\lambda_k,1}$ and $F$ is a free group.
Then we take the surjective homomorphism 
$$\psi:G_1= (*_{\lambda'\in \Lambda_1}G_{\lambda',1})*F\to \Pi_{i=1}^k G_{\lambda_i,1}/G_{\lambda_i,2}$$
that restricts to $F$ trivially and restricts to each free factor $G_{\lambda',1}$ as 
$$G_{\lambda',1}\to G_{\lambda_i,1}\to G_{\lambda_i,1}/G_{\lambda_i,2}\to \Pi_{i=1}^k G_{\lambda_i,1}/G_{\lambda_i,2}.$$
Here, the first homomorphism is an isomorphism to some $G_{\lambda_i,1}$ with $i=1,\cdots,k$, the second and third are quotient and inclusion homomorphisms, respectively. Then we take $G_2=\text{ker}(\psi)\vartriangleleft G_1$, and $G_1/G_2\cong\Pi_{i=1}^k G_{\lambda_i,1}/G_{\lambda_i,2}$ is abelian. 

By the Kurosh Subgroup Theorem again, we have $$G_2\cong (*_{\lambda''\in \Lambda_2}G_{\lambda'',2})*F'$$
where each $G_{\lambda'',2}$ is isomorphic to one of $G_{\lambda_1,2},\cdots, G_{\lambda_k,2}$ and $F'$ is a free group. 

Since $G_{\lambda_1,2},\cdots, G_{\lambda_k,2}$ and $F'$ all satisfy property $(\star_n)$, the inductive hypothesis implies that $G_2$ satisfies property $(\star_n)$. So $G$ satisfies property $(\star_{n+1})$ and the induction is done.
\end{proof}

Note that the sequence of subgroups in Proposition \ref{lexsubgroups} may not satisfy the requirement in Definition \ref{starn}, since the $G_1<G$ in Proposition \ref{lexsubgroups} may not be a normal subgroup. So we first prove the following lemma, which implies that groups of $3$-manifolds in Theorem \ref{virtualfiber} satisfy property $(\star_2)$, thus lie in the family Lex.
\begin{lemma}\label{3manifoldstar2}
Let $M$ be a compact, connected, orientable, irreducible $3$-manifold with empty or tori boundary, then $G=\pi_1(M)$ satsifies property $(\star_2)$.
\end{lemma}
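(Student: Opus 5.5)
The plan is to split into cases according to the geometry of $M$, exhibit in each case a finite-index subgroup $G'<G$ with property $(\star_2)$, and then pass back to $G$ using Lemma \ref{starnproperty} (2).

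\textbf{Finite and virtually fibered cases.} If $\pi_1(M)$ is finite, take $G_0\vartriangleright G_1=\{1\}$ with all later terms trivial; this gives $(\star_0)$ and hence $(\star_2)$.

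If $\pi_1(M)$ is infinite and $M$ is not a closed graph $3$-manifold without a non-positively curved metric, Theorem \ref{virtualfiber} gives a finite cover $\tilde{M}$ with an exact sequence $1\to A\to \pi_1(\tilde M)\to B\to 1$. Here $A\cong\mathbb{Z}$ and $B$ is a surface or free group, or $A$ is a surface or free group and $B\cong\mathbb{Z}$.
\begin{itemize}
\item In the case $B\cong\mathbb{Z}$: if $A$ is free, $\pi_1(\tilde M)$ is (free)-by-(abelian), so the chain $\pi_1(\tilde M)=G_0=G_1\vartriangleright G_2=A$ gives $(\star_1)$. If $A$ is a surface group, I would use instead the orientable bundle structure on the fiber: surface groups are $F_\infty\rtimes\mathbb{Z}$, where the kernel is the free kernel of a map $A\to\mathbb{Z}$. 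The difficulty is normality in $\pi_1(\tilde M)$. To avoid it, I would pass to the kernel $K$ of $\pi_1(\tilde M)\to H_1(\tilde M;\mathbb{Z})/\text{torsion}$, which is normal with abelian quotient. Then $K$ is a normal subgroup of infinite index in a $3$-manifold group (or $\tilde M$ has $b_1=1$ and $K\le A$). In either case $K$ is contained in, or intersects, the fiber group in infinite index, so $K$ is free by Scott/Stallings-type arguments. Indeed, an infinite-index subgroup of a closed surface group is free, and a finitely generated normal subgroup would be a fiber.
\item In the case $A\cong\mathbb{Z}$ central-ish, with $B$ a surface or free group: take $G_1=\pi_1(\tilde M)$ and $G_2$ the preimage of $[B,B]$. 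Then $G_1/G_2$ is abelian, and $G_2$ is a $\mathbb{Z}$-extension of the free group $[B,B]$, since an infinite-index subgroup of a surface group is free. Such an extension splits, because free groups have cohomological dimension one, so $G_2\cong\mathbb{Z}\rtimes F$. Passing to a further finite-index step to make the action trivial gives $\mathbb{Z}\times F$. Then $\mathbb{Z}\times F\vartriangleright F$ has quotient $\mathbb{Z}$, and together with the first abelian step this yields the pattern finite, abelian, finite, abelian, free, which is $(\star_2)$.
\end{itemize}
In the bundle cases the first finite step is only $\tilde M$, which need not be normal. This is fixed by Lemma \ref{starnproperty} (2), applied to $\pi_1(\tilde M)<G$.

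\textbf{Remaining case.} Suppose $M$ is a closed graph manifold that is not virtually fibered. Use Theorem \ref{lexsubgroups}. The subgroup $G_1$ there has finite index, and inside $G_1$ the chain
\[
G_1\vartriangleright G_2\vartriangleright G_3\vartriangleright G_4
\]
has the shape abelian ($\mathbb{Z}$), finite (index $\le2$), abelian (subgroup of $\mathbb{Q}$), free. Inserting a trivial finite step at the start gives $(\star_2)$ for $G_1$. Lemma \ref{starnproperty} (2) then transfers this to $G$.

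\textbf{Main obstacle.} I expect the main obstacle to be the surface-bundle-over-circle subcase. There one must ensure that every step is normal in the preceding one and that the last group is genuinely free, which is why I route through the torsion-free abelianization kernel. A simpler alternative I would try first: take $G_1=\pi_1(\tilde M)$, $G_2=A$, then $G_3=A$ (trivial finite step), $G_4=[A,A]$, so that $A/[A,A]$ is abelian and $[A,A]$ is free. This is directly $(\star_2)$ and requires no additional normality beyond the subnormal chain, since normality is only needed in the immediately preceding subgroup.
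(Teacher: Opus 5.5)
Your overall structure is sound. In the non-virtually-fibered closed graph manifold case, inserting a trivial finite step before the chain of Theorem \ref{lexsubgroups} and transferring to $G$ with Lemma \ref{starnproperty} (2) is the paper's argument; the paper just intersects with a normal finite-index $H_1\subset G_1$ by hand.

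The route you propose first for the surface-bundle subcase is wrong and should be discarded. Any map to $\mathbb{Z}$ factors through $H_1(\tilde M;\mathbb{Z})/\text{torsion}$, so $K$ is always contained in the fiber group $A=\ker\phi$. When $b_1(\tilde M)=1$, the induced map $H_1(\tilde M;\mathbb{Z})/\text{torsion}\cong\mathbb{Z}\to\mathbb{Z}$ is an isomorphism, so $K=A$. That is a closed surface group, not a free group; an Anosov torus bundle is an example. Your claim that $K$ meets the fiber group in infinite index fails exactly there.

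Your ``simpler alternative'' $\pi_1(\tilde M)\vartriangleright A=A\vartriangleright[A,A]$ is correct: $[A,A]$ has infinite index in the surface group $A$, or lies in a free $A$, so it is free. This is essentially the paper's proof, which uses the kernel of a surjection $A\to\mathbb{Z}$ instead of $[A,A]$. Normality is not an obstacle here at all.

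In the circle-bundle case you argue differently from the paper, and correctly. You take the preimage of $[B,B]$ and split $1\to\mathbb{Z}\to G_2\to[B,B]\to1$ because $[B,B]$ is free. You then use a step of index at most $2$ to kill the action on $\mathbb{Z}$, which yields $\mathbb{Z}\times F'\vartriangleright F'$.

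The paper instead pulls back an infinite cyclic cover from a surjection $\pi_1(\Sigma)\to\mathbb{Z}$. It then observes that an orientable circle bundle over a non-compact orientable surface is trivial, which gives $F\times\mathbb{Z}$ directly with $G_2=G_3$. Your version is purely algebraic and does not care whether the fiber is central. The paper's is topological and avoids the extra finite step; with $\tilde M$ and the base orientable, the fiber is central, so your index-$\le 2$ step is not actually needed there.

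Two bookkeeping points need fixing.
\begin{enumerate}
\item A nontrivial finite group satisfies $(\star_1)$, not $(\star_0)$, since $(\star_0)$ means the group itself is free.
\item Your case split leaves a hole. You treat ``not a closed graph manifold lacking a non-positively curved metric'' and ``closed graph manifold that is not virtually fibered'', which omits closed graph manifolds with no such metric that are nevertheless virtually fibered. Theorem \ref{chargeless} only gives one implication. Your surface-bundle argument covers these, so split the cases as the paper does: virtually a surface bundle over the circle, virtually a circle bundle, or else, by Theorem \ref{virtualfiber}, a non-virtually-fibered closed graph manifold.
\end{enumerate}
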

\begin{proof}
If $G=\pi_1(M)$ is finite, then we take $G_1=G_2=\{1\}$. Then the sequence $G\vartriangleright G_1\vartriangleright G_2$ implies that $G$ satisfies property $(\star_1)$, thus also satisfies property $(\star_2)$.

If $M$ is virtually a surface bundle over the circle, let $\tilde{M}$ be a finite regular cover of $M$ that is a surface bundle over the circle, with fiber surface $\Sigma$. We construct
$$G=G_0\vartriangleright G_1\vartriangleright G_2\vartriangleright G_3\vartriangleright G_4$$ 
as follows. We first take $G_1=\pi_1(\tilde{M}) \vartriangleleft G_0=\pi_1(M)$ of finite-index. Let $\phi: G_1\to \mathbb{Z}$ be the surjective homomorphism dual to $\Sigma$. We take $G_2=G_3=\text{ker}(\phi)\cong \pi_1(\Sigma)$, then we have $G_1/G_2\cong \mathbb{Z}$ and $G_2/G_3\cong \{1\}$. Here, $G_3$ is the trivial group, a free group, or a surface group. If $G_3$ is trivial, we take $G_4=G_3$. If $G_3$ is nontrivial, there exists a surjective homomorphism $\psi: G_3 \to \mathbb{Z}$ and $G_4=\text{ker}(\psi)$ is isomorphic to a free group. So $G=\pi_1(M)$ satisfies property $(\star_2)$ in this case.

If $M$ is virtually a circle bundle over a surface, let $\tilde{M}$ be a finite regular cover of $M$ that is a surface bundle over the circle, with an orientable base surface $\Sigma$. If $\Sigma$ is the $2$-disk, we have $G_1=\pi_1(\tilde{M})\cong \mathbb{Z}$, so $G$ satisfies property $(\star_1)$, thus also property $(\star_2)$. So we can assume that $\pi_1(\Sigma)$ is either a free group or a surface group. We construct
$$G=G_0\vartriangleright G_1\vartriangleright G_2\vartriangleright G_3\vartriangleright G_4$$ 
as follows.
We take $G_1=\pi_1(\tilde{M}) \vartriangleleft G=\pi_1(M)$ of finite-index. Let $\phi:\pi_1(\Sigma)\to \mathbb{Z}$ be a surjective homomorphism, and let $\tilde{\Sigma}$ be the infinite cyclic cover of $\Sigma$ corresponding to $\text{ker}(\phi)$. We take the homomorphism $\psi:\pi_1(\tilde{M})\to \pi_1(\Sigma)\xrightarrow{\phi}\mathbb{Z}$ where the first homomorphism is induced by the bundle projection, and let $\hat{M}$ be the infinite cyclic cover of $\tilde{M}$ corresponding to $\text{ker}(\psi)$. We take $G_2=G_3=\pi_1(\hat{M})$,  then $G_1/G_2\cong \mathbb{Z}$ and $G_2/G_3\cong \{1\}$ hold. Since $\hat{M}$ is a circle bundle over the non-compact orientable surface $\tilde{\Sigma}$, it must be the trivial bundle and we have $G_3=\pi_1(\hat{M})\cong \pi_1(\tilde{\Sigma})\times \pi_1(S^1)\cong \pi_1(\tilde{\Sigma})\times \mathbb{Z}$. Then we take $G_4=\pi_1(\tilde{\Sigma})\times \{1\} \vartriangleleft G_3$, which is a free group since $\tilde{\Sigma}$ is non-compact. So $G=\pi_1(M)$ satisfies property $(\star_2)$ in this case.

By Theorem \ref{virtualfiber}, the remaining case is that $M$ is a closed graph $3$-manifold that is not virtually fibered. Proposition \ref{lexsubgroups} gives a sequence of subgroups 
$$G=G_0>G_1\vartriangleright G_2\vartriangleright G_3\vartriangleright G_4$$ 
satisfying the definition of property $(\star_2)$, except that $G_1$ may not be a normal subgroup of $G$. Let $H_1\vartriangleleft G$ be a finite-index normal subgroup contained in $G_1$, and let $H_i=H_1\cap G_i$ for $i=2,3,4$. Then the sequence of subgroups $$G=G_0\vartriangleright H_1>H_2>H_3>H_4$$ 
satisfyies the requirement of property $(\star_2)$. We can check these properties by the same proof as in Lemma \ref{starnproperty} (1). 

The proof of this lemma is done.
\end{proof}

Now we are ready to prove Corollary \ref{morelex}, which implies that all finitely generated $3$-manifold groups lie in the family Lex. The structure of the proof is similar to that of Corollary \ref{morepolyfree}.
\begin{proof}

We will prove that all finitely generated $3$-manifold groups satisfy property $(\star_2)$. Then Lemma \ref{elementarylex} implies that all these groups lie in the family Lex.

{\bf Step I.} We first suppose that $M$ is a compact, orientable, irreducible $3$-manifold with empty or tori boundary. Then Lemma \ref{3manifoldstar2} implies that $G=\pi_1(M)$ satisfies property $(\star_2)$.

{\bf Step II.} Now we suppose that $M$ is compact, orientable, irreducible and $\partial$-irreducible. 

We can assume that $M$ has no $S^2$ boundary component. Otherwise, the irreducibility implies $M=D^3$ and $\pi_1(M)$ is trivial.

As in the proof of Corollary \ref{morepolyfree}, we construct a $3$-manifold $N$ as in Step I, so that $M$ is a submanifold of $N$ and the inclusion $M\to N$ induces an injective homomorphism $\pi_1(M)\to \pi_1(N)$. Step I implies that $\pi_1(N)$ satisfies property $(\star_2)$, then Lemma \ref{starnproperty} (1) implies that $G=\pi_1(M)$ satisfies property $(\star_2)$.

{\bf Step III.} Then we suppose that $M$ is compact, orientable, and irreducible. 

We inductively compress $\partial M$ along properly embedded discs to obtain a disjoint union of manifolds as in Step II. So $\pi_1(M)$ is a (finite) free product of groups satisfying property $(\star_2)$ and a free group. Then Lemma \ref{starnproperty} (3) implies that $\pi_1(M)$ satisfies property $(\star_2)$.

{\bf Step IV.} We suppose that $M$ is compact and orientable.

The prime decomposition implies that $M$ is a connected sum of manifolds as in Step IV and copies of $S^2\times S^1$. So $\pi_1(M)$ is a (finite) free product of groups satisfying property $(\star_2)$ and a free group, and it satisfies property $(\star_2)$ by Lemma \ref{starnproperty} (3) again.

{\bf Step V.} We suppose that $M$ is an arbitrary $3$-manifold with finitely generated fundamental group. 

If $M$ is non-orientable, we take the orientable double cover $\tilde{M}$. If $\tilde{M}$ is not compact, we take a Scott core $N\subset \tilde{M}$ (\cite{Sco}), which is a compact manifold as in Step IV such that the inclusion-induced homomorphism $\pi_1(N)\to \pi_1(\tilde{M})$ is an isomorphism. By Step IV, $\pi_1(N)\cong \pi_1(\tilde{M})$ satisfies property $(\star_2)$, then Lemma \ref{starnproperty} (2) implies that $\pi_1(M)$ satisfies property $(\star_2)$.

We finish the proof of Corollary \ref{morelex}.
\end{proof}

\end{document}